\documentclass[a4paper, 11pt, reqno]{amsart}
\usepackage{mystyle_arXiv}
\usepackage[natbibapa]{apacite}

\newcommand{\cU}{{\mathcal U}}
\renewcommand{\veth}{\phi^\varepsilon}

\title{A duality and free boundary approach to adverse selection$^*$}

\thanks{$^*$ Robert McCann's research is supported in part by the Canada Research Chairs program CRC-2020-00289, the Simons Foundation, and Natural Sciences and Engineering Research Council of Canada Discovery Grants RGPIN--2015--04383 and 2020--04162. The work of Kelvin Shuangjian Zhang is supported by the ERC project NORIA. The authors are grateful to Cale Rankin$^\dagger$ for providing Appendix \ref{section:Rankin}, to Jean-Marie Mirebeau for permission to reproduce one of his figures, to Luis Caffarelli for making his unpublished notes with Lions available to us, and to Ivar Ekeland, Jean-Guillaume Forand, Xianwen Shi, Alex Kolesnikov, and participants on our virtual presentation of these results at the Moscow Seminar on Mathematical Problems in Economics in May 2021 for useful feedback,  and to Toronto's Fields Institute for the Mathematical Sciences, where much of this work was performed. 
	\copyright \today}

\author[1]{Robert J. McCann$^\dagger$}\thanks{$^\dagger$Department of Mathematics, University of Toronto, Toronto, Ontario, Canada, M5S 2E4 {\tt mccann@math.toronto.edu} and {\tt cale.rankin@utoronto.ca}}
\author[2]{Kelvin Shuangjian Zhang$^\ddagger$}\thanks{$^\ddagger$
	School of Mathematical Sciences,
	Fudan University, Shanghai,
	CHINA 200433
 {\tt ksjzhang@fudan.edu.cn}}

\allowdisplaybreaks

\begin{document}

\begin{abstract}
Adverse selection is a version of the principal-agent problem that includes monopolist nonlinear pricing,
where a monopolist with known costs seeks a profit-maximizing price menu facing a population of potential consumers
whose preferences are known only in the aggregate.  For multidimensional spaces of agents and products,  
\cite{RochetChone98} reformulated this problem as a concave maximization over the set of convex functions,
by assuming agent preferences combine bilinearity in the product and agent parameters with a quasilinear sensitivity to prices.
We characterize solutions to this problem by identifying a dual minimization problem.  This duality allows us to reduce
the solution of the square example of Rochet-Chon\'e to a novel free boundary problem,  giving the first analytical description
of an overlooked market segment.
\end{abstract}

\maketitle
\vspace{-0.5cm}
\noindent \textbf{Keywords.} {\small \textit{Strong duality, Principal-Agent problem, Rochet-Chon\'e, asymmetric information, adverse selection, monopolist nonlinear pricing, multidimensional screening, bilevel optimization, free boundary, bunching}}
\vspace{0.5cm}

\tableofcontents

\section{Introduction}\label{section:introduction}

The principal-agent problem has provided an important framework for modelling economic questions involving asymmetric information since the 1970s.
In the context of nonlinear pricing, the principal represents a monopolist who wishes to maximize her total profit over all possible price menus, facing a given distribution of agent (i.e., consumer) types, while each consumer aims to optimize his utility by choosing one product anonymously and paying its price to the monopolist.

The monopolist faces a bi-level optimization problem. Every time she changes the price menu, the consumers' choices of products may change in response, resulting in a different distribution of the products sold and a corresponding change to the monopolist's profit. However, this bi-level optimization can be reformulated as a {(single-level)} problem, with nonlinear constraints on the product-price pair to enforce incentive compatibility and individual rationality. The former condition ensures that the product-price pair reflects the choices of consumers facing the price menu. The latter reflects the existence of an outside option whose price the monopolist cannot control. For example, public transportation might represent an outside option relative to a vehicle-selling monopolist.

Under suitable assumptions on the consumers' direct utility, this problem can also be reformulated as a maximization problem with generalized convexity constraints on indirect utilities. This reformulation exploits the natural duality between the monopolist's price menu and the agents' indirect utilities, and the implementation result that each consumer's best choice lives in a generalized subdifferential of their indirect utility function. For unidimensional consumer types, this dual approach can be traced back to \cite{Mirrlees71} work on optimal taxation.  
A dual approach for multidimensional consumer types with bilinear preference functions was developed by \cite{RochetChone98},
in a landmark contribution among the vast subsequent literature on mechanism design with multidimensional
types.  
Analogous implementability, existence, and stability of optimal strategies for more general quasilinear preferences can be found in \cite{Rochet87}, \cite{Carlier01} and \cite{FigalliKimMcCann11} respectively.
Such results were recently extended to fully nonlinear preferences by \cite{NoldekeSamuelson18} and
\cite{McCannZhang19}. A control-theoretic approach to the quasilinear case was developed by \cite{Basov05}.

The early literature focuses on the one-dimensional version of such questions,  where products are parameterized by quality and agents by wealth, as in the classical studies of \cite{Mirrlees71} on taxation and \cite{Spence74} on educational signalling. 
In some cases, explicit solutions can be obtained on interval domains, as in \cite{MussaRosen78}. Here, the principal's optimization separates the domain of the agents into two parts: a bottom region where the participation constraint binds; and a top region where agents choose customized products according to their types. All the agents choose the same product in the bottom part: the outside option. In one-dimensional cases, the fraction of types choosing the outside option may be positive or zero, whereas 
for multidimensional strictly convex sets of types, \cite{Armstrong96} shows the non-participation region must have a positive measure. 

Multidimensional versions of the problem,  in which both agents and products require several variables to describe,  have proven much thornier to analyze; see e.g.~\cite{McAfeeMcMillan88} or \cite{Wilson93}. 
Explicit solutions are extremely difficult to obtain except on radially symmetrical domains (\cite{Zhang18}).   One must now solve {\em partial} 
in addition to {\em ordinary} differential equations,
subject to the nonstandard convexity constraint arising from incentive compatibility.
For example, in the case of bilinear preferences on the plane, the {\em monotone} (scalar-increasing) map from agents to products representing agents' optimal choices must be replaced by the {\em gradient} of the agent's {\em convex}
indirect utility. 
Furthermore, the optimal solution has regions displaying different behaviour according to the rank of the Hessian of this convex function, as discovered by  \cite{RochetChone98}.
Between \cite{Armstrong96}'s positive bottom fraction of agents who select the outside option,
and the product-customizing top market segment, (where the Hessian matrix of indirect utility has {zero versus full rank} respectively), 
a bunching region can lie,  which is
foliated by families of agents (isochoice sets) who select the same product type in the optimal solution,
as Rochet and Chon\'e discovered in their two-dimensional square model. The indirect utility in this region has a Hessian matrix of rank (and corank) $1$. Moreover, the Euler-Lagrange equation
of the optimization takes on a different character in each of these regions, so that an analytical solution to the problem requires matching  (or ``smoothly pasting'') a solution of a partial differential
(Poisson) equation in the top region, to the solution of an ordinary differential equation in the bunching region.  Finding the boundary between these regions,
whose geometry is a priori unspecified,  becomes part of the problem: we shall show it does not generally reduce to a point (as in one-dimension),  nor to a line {(or hyperplane)} as 
Rochet and Chon\'e {assumed.} 
Finally, the problem requires appropriate boundary conditions and is highly sensitive to the shape of the domain.

The contributions of the present work are two-fold.  First, we develop a duality theory which characterizes the solution to the multidimensional adverse selection problem,
under Rochet and Chon\'e's 
assumptions of bilinearity of agent preference in product type, and quasilinearity in price.  Second,  we introduce a new free-boundary problem which characterizes the solution
to the Rochet-Chon\'e square example analytically.   This requires us to derive an Euler-Lagrange equation for a segment of the market overlooked by Rochet and Chon\'e,
in which the isochoice segments vary in slope as well as in length,  as suggested by numerical simulations of \cite{Mirebeau16}.

Duality has proved to be a powerful tool for characterizing solutions to other revenue optimization problems. For instance, \cite{DaskalakisDeckelbaumTzamos17} developed a strong duality theory to find the optimal mechanism for selling multiple goods to a single additive buyer, generalizing the single good auction of \cite{Myerson81}.  Later \cite{KleinerManelli19} provided another 
approach to this duality. \cite{GiannakopoulosKoutsoupias2018} studied the optimal (auction) strategy for selling multiple goods to multiple buyers and found a (different) duality theory for the single bidder case. 
A duality approach for multi-bidder multi-item auctions was discovered by
\cite{KolesnikovSandomirskiyTsyvinskiZimin22+} in parallel with the present manuscript;
they interpret their dual as a continuous optimal flow problem whose prescribed divergence second-order stochastically
dominates a certain neutral measure inferred from the data.
We hope to convince the reader that the simpler duality relation introduced below is as effective in the present context.

Although inspired in part by this literature, our duality theory for the monopolist's optimal pricing problem differs from the above multi-good auction optimization in several ways:
\begin{enumerate}
	\item[(a).] In the auction setting, each item can only be sold to at most one buyer, resulting in Lipschitz constraint in the single bidder problem (which becomes a nonlocal constraint on the assignment with multiple bidders), while in the nonlinear pricing model, bunching can occur in which multiple agents choose the same product.
	\item[(b).] In the auction setting, each buyer can get multiple goods, while in nonlinear pricing, each buyer would choose exactly one product which might be the outside option.
	\item[(c).] In the auction setting, the seller has no manufacturing costs, and thus, the objective functional is linear with respect to the indirect utility, whereas ours is nonlinear.
\end{enumerate}

In this paper, we specify a minimization problem that is dual to the nonlinear pricing problem over indirect utilities and prove that the primal and dual optima are both attained and their values are equal. In Section \ref{section:model}, we introduce the multidimensional nonlinear pricing problem, including the dual approach initiated by \cite{Mirrlees71} and extended to multidimensional types by \cite{RochetChone98}. Then, we present the main strong duality and attainment results in Section \ref{section:duality}. The resulting complementary slackness conditions characterize the unique optimal solution to the Rochet-Chon\'e model. In 
Section~\ref{section:applications}, we describe the analytical solution to the square version of the problem detailing three regions: the non-participation region, the bunching region, and the customization region. 
It is worth emphasizing that the bunching region we characterize as the solution to a free boundary problem is not the one described by \cite{RochetChone98}, 
which is shown to lack consistency in \cite{McCannZhang23c}, 
but instead coincides with the numerical solution of \cite{Mirebeau16}.

\section{The model}\label{section:model}

\subsection{Monopolist's problem}
A monopolist who produces and sells products aims to find the best price menu, knowing only the manufacturing cost and the distribution of consumer types. 
Let $X \subset \R^n$ denote the set of consumers 
 $Y \subset {\R^{n}}$ the set of products
~and $f:X \rightarrow [0,\infty]$ the density of consumer types. In Section \ref{section:applications}, {we} specialize to the uniform case $f(x)=1_X(x)$.

A measurable map $x \in X \mapsto (y(x), z(x)) \in Y \times \R$ of agents to (product, price) pairs is called {\it incentive compatible} if and only if $x \cdot y(x) - z(x) \ge x \cdot y(x') - z(x')$ for all $x, x' \in X$. This condition ensures agents have no incentive to hide their types when choosing products. The map is called {\it individually rational} if and only if $x \cdot y(x) - z(x) \ge x \cdot y_{\emptyset} - z_{\emptyset}$ for all $x\in X$, where $y_{\emptyset}$ and $z_{\emptyset}$ represents the outside option and its price;
for convenience we take $y_\emptyset=0$ and $z_\emptyset=0$ henceforth. 
In the context of bi-level optimization, given a price menu $v: Y \rightarrow \R$, the map $x \in X \mapsto (y(x), v(y(x)))$ is incentive compatible 
and individually rational if for each $x \in X$, $y(x)$ solves the consumer $x$'s problem of choosing the optimal product to maximize his utility $x \cdot y - v(y)$.

{Let $c: \R^n \rightarrow \R $ be the manufacturing cost, extended by setting 
\begin{equation}\label{cost extension}
 c(y) := +\infty\ \mbox{\rm for}\  y \not\in Y. 
\end{equation}
The monopolist obtains a net profit of $v(y(x))  - c(y(x))$ when consumer $x$ chooses $y(x)$.} The monopolist's problem can be formulated as follows:
\begin{flalign}\label{problem:monopolist_1}
	\begin{cases}
		\sup \Pi[y,v]: = \int_{X} \left( v(y(x))  - c(y(x)) \right) f(x) dx \quad \text{subject to}\\
		x \in X \mapsto (y(x), v(y(x))) \text{ is incentive compatible, individually rational.}\\
	\end{cases}
\end{flalign}

\medskip

{\noindent {\bf Assumption 1}. Assume $Y$ is a closed convex cone and the outside option $y_{\emptyset} = 0$ is sold at a price $0$.	Assume $X$ is a compact convex set with a nonempty interior $\Int(X)$ and $f$ is {a probability density} on $\Int(X)$ {which is bounded below by a positive constant}
    (or else is positive and lower semicontinuous on $\Int(X)$ and satisfies a Poincar\'e inequality as in Definition \ref{D:Poincare} below.)

{We focus on the cases where the cost 
\eqref{cost extension} of the outside option is no larger than its price, 
and no larger than the cost of other products. Thus, we assume, without loss of generality, that the cost of the outside option equals to its price, which is set to be zero by Assumption 1.
}
}

{\noindent {\bf Assumption 2}}. Assume $c(0) =0$, $c$ is non-negative, continuously differentiable,  strictly convex,  and $c(y) \ge a_0 |y|^2 - a_1$ holds for all $|y| \ge M$ with constants $a_0, M >0$ and $a_1\in \R$.

\bigskip
\subsection{Dual approach}

{For any function $g: S \subset \R^n \rightarrow \R$, its Legendre-Fenchel transform is a function $g^*: S^* \rightarrow \R $ defined by 
	\begin{equation}\label{Fenchel}
		g^*(x') = \sup_{x \in S} ~\langle x, x'\rangle - g(x),
	\end{equation}
	where the domain $S^* = \{x' \in \R^n: g^*(x') < \infty  \}$. Note that $g$ (and similarly $g^*$) can be extended to a function mapping from $\R^n$ to $\R \cup \{+\infty\}$ by setting $g(x) = +\infty$ whenever $x \not\in S$,  
 in which case the supremum \eqref{Fenchel} can be taken over all $x\in \R^n$.}

For any fixed price menu $v$,
define agents' {\it indirect utility} $u: X \rightarrow \R$ as {$v^*$ restricted to $X$.}
 As a supremum of linear functions, $u$ defined above is convex and thus differentiable almost everywhere by, for instance, Rademacher's theorem. Define the subdifferential of $u$ as follows. For any $x \in X$, let  
$$\partial u (x) := \{ y \in \R^n: x \cdot y - u(x) \ge x' \cdot y - u(x'), \text{ for all } x' \in X \}.$$ When $u$ is differentiable at $x$, the subdifferential of $u$ at $x$ is a singleton set containing its gradient: $\{Du(x)\}$.
Denote by $u_{\emptyset}: X \rightarrow \R$ the utility of agents from purchasing the outside option, i.e., $u_{\emptyset}(x)
{= 0}$ for {all 
} $x\in X$ { in our stylized setting.}
In the context of \cite{RochetChone98}'s model,  the following lemma and its corollary 
are well-known: facing any price menu $v$, 
they assert that a convex gradient gives the map from each consumer type to the product he selects (to maximize his utility).

\begin{lemma}[Indirect utility encodes products selected]
\label{lemma:y=Du}
For an agent $x \in X$ facing a price menu $v$, suppose his indirect utility is attained by an optimal product $y \in Y$. Then $y \in \partial v^* (x)$ (i.e., $y = Dv^*(x)$ if $v^*$ is differentiable at $x$).
\end{lemma}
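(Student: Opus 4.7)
The plan is to unwind the definition of the subdifferential of $u=v^*|_X$ and show that any maximizer $y$ of the consumer's problem at $x$ satisfies the defining inequality. Concretely, I want to verify that for the chosen $y\in Y$,
\[
x\cdot y - u(x) \ge x'\cdot y - u(x') \qquad \forall\, x'\in X.
\]

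First I would use optimality of $y$ at $x$, which by definition of indirect utility says $u(x)=x\cdot y - v(y)$. Substituting this into the left side of the target inequality collapses it to $-v(y) \ge x'\cdot y - u(x')$, i.e., $u(x')\ge x'\cdot y - v(y)$. Next I would appeal to the variational definition $u(x') = v^*(x')=\sup_{y'\in Y} x'\cdot y' - v(y')$; since $y\in Y$ is an admissible competitor, we immediately get $u(x')\ge x'\cdot y - v(y)$, which is exactly what was needed. This establishes $y\in\partial u(x)=\partial v^*(x)$.

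The parenthetical reduction to $y=Dv^*(x)$ whenever $v^*$ is differentiable at $x$ then follows from the standard fact that the subdifferential of a convex function at a point of differentiability is the singleton $\{Dv^*(x)\}$, which is the remark already made after the definition of $\partial u$ in the excerpt.

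There is no real obstacle here: the argument is a one-line application of the definitions of Legendre--Fenchel transform and subdifferential. The only subtlety worth flagging is to make sure the supremum defining $u(x)$ is actually attained at some $y\in Y$ (this is hypothesized in the statement) so that the equality $u(x)=x\cdot y - v(y)$ used in the first step is available; and to note that the argument does not require $v^*$ to be differentiable, giving the subdifferential inclusion in full generality and the gradient identification as an immediate corollary.
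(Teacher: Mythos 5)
Your proof is correct and is essentially identical to the paper's: both reduce the subdifferential inequality to $v(y)\ge x'\cdot y - v^*(x')$ by substituting the attainment identity $v^*(x)=x\cdot y - v(y)$, and then invoke the defining supremum of the Legendre transform. The remark about the singleton subdifferential at points of differentiability matches the paper's closing line as well.
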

\begin{proof}
	By definition of $v^*$, for any $x' \in X$, $v^*(x') \ge x' \cdot y - v(y)$. Since $y$ is an optimal choice for agent $x$, one has $v^*(x) = x \cdot y - v(y)$. Therefore, for any $x' \in X$, $x\cdot y - v^*(x) = v(y) \ge x'\cdot y - v^*(x')$. By the definition of subdifferential, one has $y \in \partial v^*(x)$. 
	
	If $v^*$ is differentiable at $x$, $\partial v^*(x) = \{D v^*(x)\}$. In this case, $y = Dv^*(x)$.
\end{proof}

As a direct consequence of the above lemma, we have the following result exhibiting the explicit dependence of agents' optimal choice on the pricing menu, which could also be obtained independently from the Envelope theorem.

\begin{corollary}
	Let $y: X \rightarrow Y$ represent the map from an agent to a product that maximizes his utility facing a price menu $v$. Then $y(x) = Dv^*(x)$ for almost every $x \in X$.
\end{corollary}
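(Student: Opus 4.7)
The plan is to combine the preceding Lemma \ref{lemma:y=Du} with the almost-everywhere differentiability of convex functions, which has already been flagged in the paragraph introducing the subdifferential. First, I would apply the lemma pointwise: at every $x \in X$ where the supremum defining $v^*(x)$ is attained by some product $y(x) \in Y$, the lemma yields the inclusion $y(x) \in \partial v^*(x)$. The corollary's hypothesis that $y : X \to Y$ selects a utility-maximizing product for each $x$ makes this inclusion available throughout $X$ (or at least on a full-measure subset, which is all that is needed).

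Second, I would recall that $v^*$ is the Legendre--Fenchel transform \eqref{Fenchel}, a supremum of affine functions of $x$, and is therefore convex on $X$. Convex functions on $\R^n$ are locally Lipschitz on the interior of their effective domain, so Rademacher's theorem (invoked earlier in this section for the indirect utility $u$) produces a full-measure subset of $\Int(X)$ on which $v^*$ is differentiable. At each such point, the subdifferential collapses to the singleton $\partial v^*(x) = \{D v^*(x)\}$, a standard fact from convex analysis.

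Combining these two observations gives $y(x) = D v^*(x)$ for almost every $x \in X$, as required. There is no genuine obstacle here: the corollary is essentially a reformulation of the lemma using the identification of the subdifferential with the gradient at points of differentiability. The alternative derivation via the Envelope theorem, mentioned by the authors, would instead differentiate the value function $v^*(x) = \sup_y [\, x \cdot y - v(y)\,]$ directly in $x$; but within the framework already built, the subdifferential route above is the most economical, since the inclusion $y(x) \in \partial v^*(x)$ is already in hand from the lemma.
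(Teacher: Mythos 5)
Your proposal is correct and follows essentially the same route as the paper's own proof: apply Lemma \ref{lemma:y=Du} to obtain $y(x) \in \partial v^*(x)$, then invoke the almost-everywhere differentiability of the convex function $v^*$ (via Rademacher) to collapse the subdifferential to the gradient. The extra detail you supply (local Lipschitzness, singleton subdifferential at points of differentiability) simply spells out the justification that the paper leaves implicit.
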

\begin{proof}
	Apply Lemma \ref{lemma:y=Du} to all the agents $x$ where $v^*$ is differentiable, then the conclusion follows from the observation that $v^*$ is differentiable almost everywhere.
\end{proof}

{In the weighted Hilbert space $H^1_f$ defined after \eqref{L^p_f} below, l}et 
\begin{equation}\label{admissable_old}
	\mathcal{U} := \left\{u\in { H^1_f} \mid u \text{ is convex}, Du(X) \subset Y, \text{ and } u \ge  u_{\emptyset}\equiv 0\right\}
\end{equation}
denote the set of admissible indirect utilities that
corresponds to the individually rational and incentive compatible (product, price) pair. 
Then $\mathcal{U}$ 
is a pointed convex cone, i.e., $\mathcal{U} \cap (-\mathcal{U}) = \{0\}$ and 
	$s_1 u_1 + s_2 u_2 \in \mathcal{U}$ for any scalars $s_1, s_2 \ge 0$ and $u_1, u_2 \in \mathcal{U}$.

It is also well-known that the problem \eqref{problem:monopolist_1} can be reformulated as the following maximization problem over indirect utilities as was done in \cite{RochetChone98}:
\begin{flalign}\label{problem:monopolist_2old}
	\sup_{u \in \mathcal{U}} \left\{\Phi[u] : = \int_{X} \left[ x \cdot Du(x) - u(x)  - c(Du(x)) \right] f(x) dx\right\}.
\end{flalign}

{In Section \ref{section:duality}}, we will show the attainment of this supremum and the uniqueness of the maximizer. Moreover, suppose $\bar u$ is the unique maximizer of $\Phi$, then $(D\bar u, {\bar u}^*)$ serves as an optimal product-price pair of the original problem \eqref{problem:monopolist_1}. Note that the maximizers of $\Pi$ in \eqref{problem:monopolist_1} may not be unique.

\subsection{Notation}\label{subsection:notation}
Here, we introduce certain function spaces equipped with integral norms and notation to prepare for the analysis in the next section.
For $p \ge 1$, and $Z$ a Hilbert space, 
let $L_f^p(X;Z)$ denote the set of $u: X \rightarrow Z$ satisfying
\begin{equation}\label{L^p_f}
\|u\|_{L_f^p(X;Z)} := \left(\int_X |u(x)|^p f(x) dx \right)^{1/p}<\infty;
\end{equation}
in case $Z=\R$ we write $L_f^p := L_f^p(X): = L_f^p(X;\R)$.
For $u \in L^1_f$ 
we define $\langle u \rangle_f : = \int_{X} u(x) f(x) dx$.  Similarly, whenever $G_1,G_2: X \mapsto \R^{n}$ yield
$G_1\cdot G_2 \in L^1_f$ we define
$\langle G_1, G_2 \rangle_{f} : = \int_{X} G_1(x)\cdot G_2(x) f(x) dx$.

{Denote by ${H^1_f} := W_f^{1,2}: = W_f^{1,2}(X; \R)$ the weighted Sobolev space of real-valued functions in $L_f^2(X)$ 
whose first order partial derivatives {are} in $L_f^2(X)$.
}

Denote by $\mX := L^2_f(X; \R^n)$ the weighted Lebesgue space of square-integrable vector fields on $X$ equipped with the inner product \(\langle \cdot, \cdot \rangle_f \)
such that elements $G_1, G_2\in \mX$ are equivalent if $G_1 = G_2$ holds $f$-a.e.. One can check that $\mX$ {and $W_f^{1,2}$} are Hilbert spaces.

{We say a statement holds for $f$-a.e. $x$ (or $f$-almost every $x$) or $fdx$-almost surely, if the subset of $X$ where the statement does not hold has measure zero under the measure whose density is $f(x) dx$. 
	 
	 If a function $u: X \rightarrow \R$ is (twice) differentiable, denote by $Du : = \nabla u := (\frac{\p u}{\p x_1}, \frac{\p u}{\p x_2})$ as its first derivative, by 
	 \begin{equation*}
	 	D^2 u = \begin{pmatrix}
	 		\frac{\p^2 u}{\p x_1^2}  & \frac{\p^2 u}{\p x_1 \p x_2} \\
	 		\frac{\p^2 u}{\p x_2 \p x_1}  & \frac{\p^2 u}{\p x_2^2}
	 	\end{pmatrix}
	 \end{equation*}
	 as its second order derivative, and by $\Delta u : = \frac{\p^2 u}{\p x_1^2} + \frac{\p^2 u}{\p x_2^2}$ as its Laplacian.
	 
	 A function $u$ is $C^{1,1}(X)$ if it is continuously differentiable and its derivative is Lipschitz. The space is equipped with a norm
	 \begin{equation*}
	 	\| u\|_{C^{1,1}(X)} : = \sup_{x \in X}  \max \left\{ | u(x)|,  \left| \frac{\p u}{\p x_1}(x)\right|,  \left|\frac{\p u}{\p x_2}(x)\right| \right\} + \max_{i = 1,2} \left\{
	 	\sup_{x, x'\in X, x \neq x'} \frac{\left|\frac{\p u}{\p x_i}(x)- \frac{\p u}{\p x_i}(x')\right|}{\|x - x'\|}
	 	 \right\}.
	 \end{equation*}
	 
	 If $\| u\|_{C^{1,1}}$ is merely bounded on compact subsets of $X$, then we say $u \in C_{loc}^{1,1}(X)$.

}

\section{Duality}\label{section:duality}

In this section, we begin by presenting a {dual infimum}, whose value coincides with the principal's profit maximization, assuming preferences are bilinear. We then {give conditions under which} that the values of the supremum and dual infimum are both attained.  From this duality and attainment, we obtain necessary and sufficient conditions which characterize the solutions of both optimization problems, and show both are attained uniquely.

By choosing a price menu (e.g. $u^*$), the principal aims to maximize her expected profits \eqref{problem:monopolist_2old}:
\begin{flalign}\label{maximization_problem}
	\Phi[u] : = \int_{X} \left( x\cdot Du(x) - u(x) - c(Du(x)) \right) f(x) dx,
\end{flalign}
among the resulting indirect utilities $u \in \mathcal{U} = \left\{u\in { H^1_f} \mid u \text{ is convex}, Du(X) \subset Y, \text{ and } u \ge  0\right\}$.

\medskip

{Observe that any maximizer to the above problem \eqref{maximization_problem} lies on the boundary of the cone $\mathcal{U}$. In fact, if a maximizer $u>0$ on $X$, then $(u - \min u) \in \mathcal{U}$ and $\Phi[u - \min u] > \Phi[u]$ contradict the assumption that $u$ is a maximizer. Suppose $X$ is strictly convex, \cite{Armstrong96} shows that there exists a positive area on $X$ where the constraint $u \ge 0$ binds at optimality. In addition, \cite{RochetChone98} shows that this area has mass $1$ under some induced measure.
	
Therefore, no single Euler-Lagrange equation governs the optimizer on the entire domain. {Rather, the Euler-Lagrange-{Karush-}Kuhn-Tucker equations take different forms on different regions within the domain,  depending on which constraints bind.} For an important example shown in Section \ref{section:applications}, we will divide $X$ into three regions, according to the rank of Hessian matrix of the optimizer and introduce specific forms of Euler-Lagrange equations on each region which characterize the solution.

}

Define
\begin{flalign}\label{D:Gamma}
	\Gamma:= \left\{G \in \mX  \,\Big|\,   \sup_{u \in \cU} \int_{X} \left(x\cdot Du(x) - u(x) -   G(x) \cdot Du(x) \right) f(x) dx \le 0  
	\right\}.
\end{flalign}
By definition, $\Gamma$ is convex and contains the identity map. {In the following main result of this section, we identify a dual minimization problem and show the attainment from both sides.}

\begin{theorem}[Strong duality and attainment]\label{thm:strong_duality} Under Assumption 1 and 2, 
\begin{flalign}\label{eqn:strong_duality_00}
	\sup\limits_{u \in \mathcal{U}} \Phi[u] = \inf\limits_{G \in \Gamma} \langle c^* \circ G \rangle_f.
\end{flalign} 
{Moreover, the primal supremum and dual infimum are both attained.}	Here $c^*$, $\mathcal U$ and $\Gamma$
are defined in \eqref{Fenchel},\eqref{admissable_old}, \eqref{D:Gamma},
{and $H^1_f$ and $\mX$ after \eqref{L^p_f}.}
\end{theorem}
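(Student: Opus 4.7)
The plan is to prove the $\le$ direction by Fenchel--Young, obtain a primal maximizer $\bar u$ by the direct method of the calculus of variations in $H^1_f$, and then construct the dual optimizer explicitly as $G^* := Dc(D\bar u)$ using the first-order optimality conditions on the convex cone $\mathcal{U}$. For weak duality, fix $u \in \mathcal{U}$ and $G \in \Gamma$ and split
\[
\Phi[u] = \int_X (x \cdot Du - u - G \cdot Du)\, f\,dx + \int_X (G \cdot Du - c(Du))\, f\,dx.
\]
The first integral is $\le 0$ by the definition \eqref{D:Gamma} of $\Gamma$, and the second is $\le \langle c^* \circ G\rangle_f$ by pointwise Fenchel--Young. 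Taking $\sup$ in $u$ and $\inf$ in $G$ yields $\sup_{\mathcal{U}} \Phi \le \inf_{\Gamma} \langle c^* \circ G\rangle_f$.

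\textbf{Primal attainment.} Convexity of $c$ makes $\Phi$ concave on the convex set $\mathcal{U}$; the linear part $u \mapsto \int_X (x \cdot Du - u) f\,dx$ is weakly continuous on $H^1_f$, while $u \mapsto \int_X c(Du) f\,dx$ is weakly lower semicontinuous by convexity, so $\Phi$ is weakly upper semicontinuous. The quadratic lower bound $c(y) \ge a_0 |y|^2 - a_1$ from Assumption~2, together with $u \ge 0$ and $\Phi[u_k] \ge \Phi[0] = 0$ along a maximizing sequence $u_k$, bounds $\|Du_k\|_{L^2_f}$ and $\int_X u_k f\,dx$; the Poincar\'e hypothesis in Assumption~1 then controls $\|u_k\|_{H^1_f}$. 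Extract $u_k \rightharpoonup \bar u$ in $H^1_f$; Rellich compactness gives $u_k \to \bar u$ $fdx$-a.e. along a subsequence, preserving convexity and $\bar u \ge 0$. Since pointwise convergence of convex functions forces a.e. convergence of their gradients at interior points of differentiability, the constraint $D\bar u(X) \subset Y$ is inherited from closedness of the cone $Y$. Hence $\bar u \in \mathcal{U}$, and weak upper semicontinuity yields $\Phi[\bar u] = \sup_{\mathcal{U}} \Phi$.

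\textbf{Dual attainment and strong duality.} Set $G^*(x) := Dc(D\bar u(x))$, which satisfies the Fenchel equality $G^* \cdot D\bar u - c(D\bar u) = c^*(G^*)$ a.e. For any $w \in \mathcal{U}$, convexity of $\mathcal{U}$ makes $(1-t)\bar u + tw$ admissible for $t \in [0,1]$, and the first variation at $\bar u$ (the monotonically increasing difference quotients of the concave integrand can be passed under the integral by monotone convergence) gives
\[
\int_X \bigl( x \cdot (Dw - D\bar u) - (w - \bar u) - G^* \cdot (Dw - D\bar u)\bigr)\, f\,dx \le 0.
\]
Testing with $w = 0$ and $w = 2\bar u$ (both in $\mathcal{U}$ since it is a cone) forces the complementary slackness $\int_X (x \cdot D\bar u - \bar u - G^* \cdot D\bar u)\, f\,dx = 0$; subtracting this from the displayed inequality yields $\int_X (x \cdot Dw - w - G^* \cdot Dw)\, f\,dx \le 0$ for every $w \in \mathcal{U}$, so $G^* \in \Gamma$. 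Combining Fenchel equality with complementary slackness,
\[
\langle c^* \circ G^*\rangle_f = \int_X (G^* \cdot D\bar u - c(D\bar u))\, f\,dx = \int_X (x \cdot D\bar u - \bar u - c(D\bar u))\, f\,dx = \Phi[\bar u],
\]
which with weak duality proves both \eqref{eqn:strong_duality_00} and attainment of the dual infimum at $G^*$.

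\textbf{Main obstacles.} The delicate points I anticipate are: (i) closedness of $\mathcal{U}$ under weak $H^1_f$ limits, especially preservation of $Du(X) \subset Y$, which rests on a.e. convergence of gradients for pointwise-convergent convex functions; (ii) verifying $G^* = Dc(D\bar u) \in \mathcal{X} = L^2_f$, i.e. compatibility between the growth of $Dc$ and the integrability of $D\bar u$ (automatic when $c$ has sharp quadratic growth, more delicate otherwise); and (iii) rigorously justifying the interchange of limit and integral in the first-variation computation, which I would handle by a monotone or dominated convergence argument tailored to the concavity of the integrand.
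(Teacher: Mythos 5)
Your strategy is a genuinely different route from the paper's: the paper invokes the Fenchel--Rockafellar theorem, first for an $\varepsilon$-perturbed problem $\Phi_\varepsilon$, then recovers the theorem by a compactness/limiting argument (Theorem \ref{thm:strong_duality_a} followed by the sketch proof), whereas you get the primal maximizer $\bar u$ by the direct method and then construct the dual optimizer explicitly as $G^* = Dc(D\bar u)$, checking $G^* \in \Gamma$ by a first-variation argument on the cone $\mathcal U$. The weak duality and primal attainment steps are sound and essentially match the paper's Proposition \ref{P:weak duality} and its appeal to Carlier-type compactness. The first-variation argument for $G^*\in\Gamma$ is clean and, under the extra two-sided bound \eqref{quadratic bounds}, gives a more elementary proof than the paper's.

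However, under Assumption 2 alone --- which imposes only a lower quadratic bound on $c$ --- there are two related gaps, which your ``obstacles'' paragraph gestures at but does not resolve. First, $G^* = Dc(D\bar u)$ need not lie in $\mX = L^2_f$. Finiteness of $\Phi[\bar u]$ forces $\int_X c(D\bar u) f\,dx < \infty$, but for $c(y) = |y|^4$ (the example the paper singles out as the obstruction to direct Fenchel--Rockafellar) this gives $D\bar u\in L^4_f$ only, while $\|Dc(D\bar u)\|_{L^2_f}\sim\|D\bar u\|_{L^6_f}^3$ needs $L^6_f$. Second, your monotone-convergence passage in the first variation requires the difference quotient $(c(D\bar u + t_0 Dw) - c(D\bar u))/t_0$ to be $f$-integrable for some $t_0 > 0$, i.e. $\int_X c(D\bar u + t_0 Dw)f\,dx < \infty$. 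This can fail when $c$ grows superquadratically and $Dw$ is merely $L^2_f$, i.e. precisely for those $w\in\mathcal U$ with $\Phi[w] = -\infty$. But $\Gamma$ in \eqref{D:Gamma} is defined by testing against \emph{all} $u\in\mathcal U$, not just those with $\Phi[u]>-\infty$, so an additional approximation step (e.g. Moreau--Yosida regularization of $w$, with care to preserve convexity, nonnegativity, and $Dw(X)\subset Y$, and a passage to the limit in $\int_X (x - G^*)\cdot Dw\,f\,dx - \int_X w\,f\,dx$) is needed to conclude $G^*\in\Gamma$. The paper's $\varepsilon$-perturbation circumvents both issues, which is exactly why Theorem \ref{thm:strong_duality_a} and the limiting argument were introduced.
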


An interpretation of this duality is as follows.  Compare the monopolist to a co-operative,  which is able to 
offer its members products $y \in Y$ at prices $c(y)$ given by the monopolist's costs.  The monopolist's
maximum profit coincides with the 
utility of such a co-op, minimized over all possible distributions of its membership $G_\#f$,
satisfying the strange constraint that if $G(x)$ is the true type of any agent who (irrationally) displays the anticipated behaviour of type $x$ when faced by the monopolist,  then for any price menu $u^*$ the latter proposes,
the expected direct benefit to the agents carrying out this deception (neglecting their costs) exceeds the monopolist's expected revenue.  

Before {stat}ing the strong duality result, which is deferred to Section \ref{subsection:no_duality_gap}, we first present {\it a weak duality} in which the maximization problem is bounded above by a convex minimization problem and derive {two important corollaries} on optimality conditions and uniqueness of the primal and dual problems.

\subsection{Weak Duality}

To motivate Theorem \ref{thm:strong_duality}, we present 
a weak duality in which the maximization problem is bounded above by a convex minimization problem.

\begin{proposition}[Weak Duality]\label{P:weak duality}
{With the same notation as in Theorem \ref{thm:strong_duality}},
	\begin{flalign}\label{weak duality}
	\sup\limits_{u \in \mathcal{U}} \Phi[u] \le \inf\limits_{G \in \Gamma} \langle  c^* (G) \rangle_f.
	\end{flalign} 
\end{proposition}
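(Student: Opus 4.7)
The proof is a direct Fenchel-Young calculation. The plan is to split $\Phi[u]$ into two pieces by adding and subtracting $G(x)\cdot Du(x)$: for any $G \in \Gamma$ and any $u \in \cU$,
\begin{align*}
\Phi[u] &= \int_X \bigl(x\cdot Du(x) - u(x) - G(x)\cdot Du(x)\bigr) f(x)\,dx \\
&\qquad + \int_X \bigl(G(x)\cdot Du(x) - c(Du(x))\bigr) f(x)\,dx.
\end{align*}
The first integral is $\le 0$ by the very definition \eqref{D:Gamma} of $\Gamma$. For the second, Fenchel's inequality applied pointwise to the pair $(G(x), Du(x))$ yields $G(x)\cdot Du(x) - c(Du(x)) \le c^*(G(x))$ for $f$-a.e.\ $x$, since $c^*(G) = \sup_{y\in\R^n}\bigl(G\cdot y - c(y)\bigr)$ by \eqref{Fenchel} (with $c$ extended to $+\infty$ off $Y$ as in \eqref{cost extension}).

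Combining these two bounds gives $\Phi[u] \le \langle c^*\circ G\rangle_f$ for every admissible $u$ and every $G \in \Gamma$. Taking the supremum over $u \in \cU$ and then the infimum over $G \in \Gamma$ produces the claimed inequality \eqref{weak duality}.

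A couple of technical points I would verify along the way but not dwell on: since $c$ is convex and lower semicontinuous (by Assumption~2 together with \eqref{cost extension}), $c^*$ is convex and Borel measurable, so $c^*\circ G$ is a well-defined measurable $[-\infty,+\infty]$-valued function whose $f$-weighted integral makes sense; if $\langle c^*\circ G\rangle_f = +\infty$ the bound is trivial, so one may restrict to $G$ for which it is finite. I would also note in passing that both sides of \eqref{weak duality} could in principle be $+\infty$ or $-\infty$, but the inequality is interpreted in the usual extended-real sense. There is no real obstacle here — the only content is the pointwise Young inequality combined with the linear constraint encoded in $\Gamma$ — which is exactly the structure that will let the stronger identity \eqref{eqn:strong_duality_00} be proved later by exhibiting matching primal and dual optimizers saturating both inequalities simultaneously.
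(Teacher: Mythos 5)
Your proof is correct and follows essentially the same two-step argument as the paper: the definition of $\Gamma$ bounds $\int (x\cdot Du - u - G\cdot Du)f\,dx \le 0$, and pointwise Fenchel--Young bounds $G\cdot Du - c(Du) \le c^*(G)$; the paper phrases this as a chain of two inequalities rather than an additive split, but the content is identical.
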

\begin{proof}
	For any $u \in \mathcal{U}$ and $G \in \Gamma$, one has
	\begin{flalign*}
		\Phi[u] 
		:= & \int_{X} \left(x\cdot Du(x) - u(x) -  c(Du(x)) \right) f(x) dx \\
		\le &  \int_{X} \left( G(x) \cdot Du(x)  -  c(Du(x)) \right) f(x) dx \\		
		\le &  \int_{X} c^*(G(x)) f(x) dx \\
		= & \langle c^* \circ G \rangle_f.
\end{flalign*}
	Here the first inequality is due to the definition of $\Gamma$, while the second comes from $ y\cdot y' \le c(y) + c^*(y')$ for any $y \in Y, y' \in \R^n$.
\end{proof}

{Note that the weak duality result does not rely on Assumption 1 or 2. Below is the complementary slackness result derived from the proof of weak duality.}

\begin{corollary}[Optimality condition]\label{Cor:complementary_slackness} 
{Suppose $c$ is strictly convex and continuously differentiable.} 
	Assume that $u$ and $G$ are feasible for the maximization and minimization problems \eqref{eqn:strong_duality_00} 
, respectively. Then 
	$\Phi[u] = \langle c^* \circ G \rangle_f$ 
	if and only if the following conditions hold:
	
	\begin{flalign}
		\text{1.~~} &~~G(x) = Dc(Du(x)) \text{~~holds $fdx$-almost surely}. &\\
		\text{2.~~} &~~\displaystyle \int_{X} \left(x\cdot Du(x) - u(x) -   G(x) \cdot Du(x) \right) f(x) dx = 0.&
	\end{flalign}
\end{corollary}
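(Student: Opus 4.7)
The plan is to recognize that this corollary merely traces the equality cases of the two inequalities already used in the proof of Proposition \ref{P:weak duality}, namely the chain
\[
\Phi[u] = \int_X \bigl(x\cdot Du - u - c(Du)\bigr)f\,dx \le \int_X \bigl(G\cdot Du - c(Du)\bigr)f\,dx \le \langle c^*\circ G\rangle_f.
\]
I will handle the two implications separately.

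For the \emph{if} direction I assume both conditions hold. Condition 2 asserts that the gap between the first and second expressions in the above chain vanishes, giving equality at the first $\le$. Condition 1, $G(x) = Dc(Du(x))$, combined with strict convexity and continuous differentiability of $c$ (so that the Fenchel--Young inequality $y\cdot y' \le c(y) + c^*(y')$ holds with equality iff $y' = Dc(y)$), yields the pointwise identity $G(x)\cdot Du(x) - c(Du(x)) = c^*(G(x))$ for $fdx$-a.e.\ $x$; integrating produces equality at the second $\le$. Chaining the two equalities gives $\Phi[u] = \langle c^*\circ G\rangle_f$.

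For the \emph{only if} direction I assume $\Phi[u] = \langle c^*\circ G\rangle_f$. Since the chain above is a sequence of $\le$'s between (finite) quantities, equality of the extremes forces equality at each intermediate step. Equality at the first step is exactly condition 2, noting that feasibility $G\in\Gamma$ applied to the particular $u\in\cU$ already forces this gap integral to be $\le 0$, so ``equality'' and ``vanishing'' coincide. Equality at the second step can be rewritten as
\[
\int_X \bigl[c(Du(x)) + c^*(G(x)) - G(x)\cdot Du(x)\bigr] f(x)\,dx = 0,
\]
whose integrand is pointwise non-negative by Fenchel--Young. It must therefore vanish $fdx$-a.s., which places $(Du(x), G(x))$ on the equality locus and gives $G(x) \in \partial c(Du(x))$; strict convexity and continuous differentiability of $c$ collapse this subdifferential to the singleton $\{Dc(Du(x))\}$, delivering condition 1.

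The argument is essentially a bookkeeping exercise and no serious obstacle is anticipated; the only caveat is ensuring that every integral above is well-defined and finite so that the ``$\le$-to-$=$'' reasoning applies termwise. This is automatic: $u \in \cU$ together with the convention \eqref{cost extension} forces $Du(X) \subset Y$ so $c\circ Du$ is finite $fdx$-a.s., $G \in \mX$ makes $G \cdot Du$ lie in $L^1_f$ by Cauchy--Schwarz against $Du \in L^2_f$, and finiteness of $\langle c^*\circ G\rangle_f$ is implicit in the hypothesis $\Phi[u] = \langle c^*\circ G\rangle_f$ for the only-if direction (while in the if direction the equality can be read with both sides possibly equal to $+\infty$ once the pointwise Fenchel--Young equality has been established).
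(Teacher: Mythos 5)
Your proof is correct and takes essentially the same approach as the paper's, which simply states that the conclusions follow by examining when the two inequalities in the weak-duality proof become equalities; you have spelled out the details of that examination (the Fenchel--Young equality case and the integrability bookkeeping) that the paper leaves implicit.
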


\begin{proof}
	These conclusions follow from the conditions under which the two inequalities used in the preceding proof 
	become equalities.
\end{proof}

We can see from the second condition in Corollary \ref{Cor:complementary_slackness} and the definition of $\Gamma$ that any optimizer of the minimization problem lies on the boundary of the constraint set $\Gamma$. {Thus, any naive attempt to characterize the primal solution by solving the dual problem using Euler-Lagrange equations and then characterizing the maximizer via optimality conditions is likely to fail.}

\begin{remark}[Necessary and sufficient conditions for optimality]
	\label{rmk:uniqueness}
	{Theorem 
 \ref{thm:strong_duality} implies $u \in \mathcal U$ is optimal if and only if there exists $G \in \Gamma$ such that 1-2 of Corollary \ref{Cor:complementary_slackness} hold;  similarly, a feasible $G$ is optimal if and only if there exists a feasible $u$ satisfying 1-2 of Corollary \ref{Cor:complementary_slackness}.} {Note that Proposition \ref{P:weak duality} alone implies that $u \in \mathcal U$ is optimal if there exists $G \in \Gamma$ such that 1-2 of Corollary \ref{Cor:complementary_slackness} hold; similarly, it also implies that a feasible $G$ is optimal if there exists a feasible $u$ satisfying 1-2 of Corollary \ref{Cor:complementary_slackness}.}
\end{remark}

	In Section \ref{section:applications}, we will use condition 1 in Corollary \ref{Cor:complementary_slackness} together with the strong duality theorem to verify that the solution we provided is indeed a maximizer. Below, we show that strong duality implies the uniqueness of the optimizers for both the primal and dual problems.

\begin{corollary}[Strong duality implies unique optimizers]\label{C:unique} Suppose $c$ is strictly convex and continuously differentiable, $X$ is convex, and $f$ is positive on $\Int(X)$. 
	If $\bu \in \mathcal{U}$ and $\bG \in \Gamma$ satisfy $\Phi[\bu] = \langle c^*\circ \bG \rangle_f$,
	then any minimizer $G \in \Gamma$ of \eqref{eqn:strong_duality_00}  satisfies $G(\cdot)= Dc(D\bu(\cdot)) = \bG (\cdot)$ $f$-a.e., while any maximizer
	$u \in \mathcal{U}$ satisfies  $u=\bar u$. 
\end{corollary}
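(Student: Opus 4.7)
The plan is to deduce the uniqueness of both the primal and dual optimizers by repeatedly applying the complementary slackness identity of Corollary \ref{Cor:complementary_slackness} to well-chosen pairs. The key observation is that by weak duality (Proposition \ref{P:weak duality}), the hypothesis $\Phi[\bar u]=\langle c^*\circ\bar G\rangle_f$ already forces $\bar u$ to be a primal maximizer and $\bar G$ to be a dual minimizer. Consequently, for any other primal maximizer $u\in\mathcal U$, we have $\Phi[u]=\Phi[\bar u]=\langle c^*\circ\bar G\rangle_f$, so the pair $(u,\bar G)$ also satisfies the equality hypothesis of Corollary \ref{Cor:complementary_slackness}, and similarly $(\bar u,G)$ does for any other dual minimizer $G\in\Gamma$.

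Applying condition 1 of Corollary \ref{Cor:complementary_slackness} to the three pairs $(\bar u,\bar G)$, $(\bar u,G)$, and $(u,\bar G)$ respectively yields
\[
\bar G=Dc(D\bar u),\qquad G=Dc(D\bar u),\qquad \bar G=Dc(Du)
\]
$f$-almost everywhere. The first two identities combine to give $G=\bar G$ $f$-a.e., settling the uniqueness of the dual optimizer. The first and third give $Dc(D\bar u)=Dc(Du)$ $f$-a.e.; since strict convexity of $c$ makes $Dc$ injective (each level set of $Dc$ is a singleton, as $y\mapsto c(y)-y\cdot p$ is strictly convex and hence has at most one critical point), we conclude $Du=D\bar u$ $f$-a.e., and therefore Lebesgue-a.e.\ on $\Int(X)$ since $f>0$ there.

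Because $\Int(X)$ is open, convex, and connected, the weak gradient of $u-\bar u$ vanishing there implies $u-\bar u$ agrees $f$-a.e.\ with some constant $c_0$; both functions are continuous on $\Int(X)$ by convexity, so in fact $u-\bar u\equiv c_0$ pointwise there. The remaining hurdle, which I view as the main obstacle, is to pin down $c_0$. Substituting $Du=D\bar u$ $f$-a.e.\ into the equality $\Phi[u]=\Phi[\bar u]$ makes the terms $x\cdot Du$ and $c(Du)$ cancel, leaving $\int_X u\,f\,dx=\int_X\bar u\,f\,dx$. Combined with $u-\bar u\equiv c_0$ $f$-a.e.\ and $\int_X f\,dx=1$, this forces $c_0=0$, so $u=\bar u$ $f$-almost everywhere on $X$, completing the argument.
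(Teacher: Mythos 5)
Your proof is correct and follows essentially the same route as the paper: weak duality to identify $\bar u$ and $\bar G$ as optimizers, complementary slackness (Corollary \ref{Cor:complementary_slackness}) applied to the relevant pairs to pin down $G=\bar G$ and $Du=D\bar u$ $f$-a.e., connectedness of $\Int(X)$ to conclude $u-\bar u$ is constant there, and the equality $\Phi[u]=\Phi[\bar u]$ to force that constant to vanish. The only cosmetic differences are that you invoke injectivity of $Dc$ where the paper writes $Du=Dc^*\circ\bar G=D\bar u$, and you spell out the cancellation in $\Phi$ that shows the constant is zero, which the paper states without detail.
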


\begin{proof}
	Since {duality} asserts $\Phi[u] \le  \langle c^* \circ G \rangle_f$ for all $u \in \mathcal{U}$ and $G \in \Gamma$,  the assumption $\Phi[\bu] = \langle c^* \circ \bG \rangle_f$ shows $\bu$ maximizes and $\bG$ minimizes.
	Any other minimizer $G \in \Gamma$ satisfies $G(x) = Dc(D\bu(x))$, $f$-almost surely, by Corollary \ref{Cor:complementary_slackness}.
	
	Similarly, any other maximizer $u \in \mathcal{U}$ satisfies $\bar G = Dc \circ Du$ hence $Du=Dc^* \circ \bar G = D\bar u$ $f$-a.e.\ (by the strict convexity of $c$).  Now $f>0$ implies $u-\bar u$ is constant {$f$-a.e.} on each connected component of $\Int(X)$. 
	Convexity of $\Int(X)$ {implies there is only one such connected component} and $\Phi[u]=\Phi[\bar u]$ impl{ies} this constant must vanish. {Therefore, $u \equiv \bu$ $f$-a.e.. Moreover, since both functions are convex, one has $u = \bu$ on $X$}.
\end{proof}

\subsection{Absence of duality gap and attainment}\label{subsection:no_duality_gap}

We shall next establish strong duality, meaning the values of the maximization and minimization problems introduced above coincide.
Let us first sketch a proof of the complementary inequality to \eqref{weak duality},  
by interchanging the order of the infimum and supremum
to find the saddle in an optimization which is (separately) linear in $u$ but convex in $G$:
\begin{flalign*}
& \sup_{u\in\mathcal{U}} \langle  x \cdot Du(x) -u(x) - c(Du(x)) \rangle_f\\
=& \sup_{u \in \mathcal{U}} \inf_{S:Y \longrightarrow \R^n_{+}}   \langle x \cdot Du(x) -u(x) - S(Du(x)) \cdot Du(x) + c^*(S(Du(x))) \rangle_f\\
 \ge &  \sup_{u \in \mathcal{U}} \inf_{G:X \longrightarrow \R^n_{+}}   \langle x \cdot Du(x) -u(x) - G(x)\cdot Du(x) + c^*(G(x)) \rangle_f\\
= & \inf_{G:X \longrightarrow \R^n_{+}}   \langle c^*(G(x)) \rangle_f +
 \sup_{u \in \mathcal{U}} \langle x\cdot Du(x) -u(x) - G(x) \cdot Du(x) \rangle_f \\
 = & \inf_{G \in \Gamma}   \langle c^* \circ G \rangle_f.
\end{flalign*}

One may apply Fenchel-Rockafellar duality to justify the above argument rigorously. 
{This is easiest to do if one assumes constants $0 < a_0 < a_0'$ exist, satisfying 
\begin{equation}\label{quadratic bounds}
a_0 {\rm\bf I}_n \le D^2 c(y) \le a_0' {\rm\bf I}_n \quad {\rm for\ all} \quad y \in Y,
\end{equation}
where ${\rm\bf I}_n$ denotes the identity matrix in $\R^{n \times n}$.
In this case}, one can show there is no duality gap in \eqref{weak duality} by applying directly classical duality results such as \cite[Corollary 16A]{Rockafellar74}. Alternatively, one can verify condition \eqref{condition_Adom} in Theorem \ref{thm:F-R duality} quoted below, then the strong duality follows. {In Appendix~\ref{app:Proofs of strong duality} we provide details of the latter argument.} 
Recall the following strong duality theorem from \cite[Theorem 4.4.3]{BorweinZhu04}.

\begin{theorem}[Fenchel-Rockafellar Duality Theorem {\cite[Theorem 4.4.3]{BorweinZhu04}}]\label{thm:F-R duality}
	Let $A$ and $B$ be Banach spaces, $\phi:  A\rightarrow \R \cup \{+\infty\}$ and $\psi: B \rightarrow \R\cup\{+\infty\}$ be convex functions, and $T: A \rightarrow B^*$ be a bounded linear map where $B^*$ is the Banach space dual to $B$. Denote by $\phi^*$ and $\psi^*$ the Legendre transforms of $\phi$ and $\psi$, respectively, and by $T^*$ the adjoint of $T$. Suppose that $\phi$, $\psi^*$ and $T$ satisfy 
	\begin{flalign}\label{condition_Adom}
	T (\dom \phi) \cap {\rm cont} \psi^* \ne \emptyset,
	\end{flalign}
	where  ${\rm cont} \psi^* \subset \dom(\psi^*)$ represents the set of all points where $\psi^*$ is finite and continuous.
	Then 
	\begin{flalign}\label{min_max}
	\inf_{x\in A} \{ \phi(x) + \psi^*(Tx)\} = \sup_{y\in B} \{-\phi^*(T^*y)- \psi(-y)\}.
	\end{flalign} 
	In addition, the supremum on the right hand side is attained if finite.
\end{theorem}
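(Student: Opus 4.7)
The plan is to invoke the classical perturbation--biconjugation method. For $b \in B^*$ define the value function $v(b) := \inf_{x \in A}\{\phi(x) + \psi^*(Tx + b)\}$, so that the infimum in \eqref{min_max} equals $v(0)$. I would establish that (i) $v$ is convex, (ii) $v$ is continuous at $0$ under the constraint qualification \eqref{condition_Adom}, and (iii) $v^*(y) = \psi^{**}(y) + \phi^*(-T^*y)$. Then Fenchel--Moreau gives $v(0) = v^{**}(0)$, which after the substitution $y \mapsto -y$ coincides with the right side of \eqref{min_max} once one identifies $\psi^{**}$ with $\psi$ (implicit from the statement, in which $\psi(-y)$ appears rather than its lower semicontinuous envelope).

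Step (i) is immediate, since $(x,b) \mapsto \phi(x) + \psi^*(Tx + b)$ is jointly convex on $A \times B^*$ and partial infima of jointly convex functions are convex. For step (ii), the hypothesis \eqref{condition_Adom} supplies some $x_0 \in \dom \phi$ with $Tx_0 \in {\rm cont}\,\psi^*$; then $v(b) \le \phi(x_0) + \psi^*(Tx_0 + b)$ is finite and continuous in $b$ near $0$, so $v$ itself is bounded above on a neighborhood of $0$. A convex function on a Banach space that is bounded above on a neighborhood of a point is locally Lipschitz (and subdifferentiable) there; hence $v$ is continuous at $0$ with $\partial v(0) \neq \emptyset$, and any $y_0 \in \partial v(0)$ certifies $v(0) = v^{**}(0)$ via the supporting hyperplane inequality.

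For step (iii) I would substitute $c := Tx + b$ to obtain
\[v^*(y) = \sup_{b, x}\bigl\{\langle y, b\rangle - \phi(x) - \psi^*(Tx+b)\bigr\} = \sup_{c}\bigl\{\langle y, c\rangle - \psi^*(c)\bigr\} + \sup_{x}\bigl\{-\langle T^*y, x\rangle - \phi(x)\bigr\},\]
which collapses to $\psi^{**}(y) + \phi^*(-T^*y)$. Substituting into $v(0) = v^{**}(0) = \sup_y\{-v^*(y)\}$ and renaming $y \mapsto -y$ yields \eqref{min_max}. Attainment when the value is finite comes free from the same subdifferential: any $y_0 \in \partial v(0)$ satisfies the Fenchel equality $v(0) + v^*(y_0) = 0$, so $-y_0$ realises the supremum on the right.

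The main obstacle is step (ii): upgrading the constraint qualification to genuine continuity of $v$ at $0$. Without this qualification, $v$ may fail to be lower semicontinuous at the origin and a duality gap can open, so this is precisely where \eqref{condition_Adom} is indispensable. The underpinning convex-analysis fact that ``bounded above on a neighborhood $\Rightarrow$ locally Lipschitz'' for convex functions is where the Banach structure enters in a nontrivial way, whereas the remaining manipulations of suprema and the single appeal to Fenchel--Moreau biconjugation are purely algebraic.
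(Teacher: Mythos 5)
The paper cites this theorem from Borwein and Zhu without supplying its own proof, so there is no in-paper argument to compare against; your proposal must stand on its own merits. Your perturbation/biconjugation argument is the standard route to Fenchel--Rockafellar duality and its overall structure is sound: define the value function $v(b)=\inf_x\{\phi(x)+\psi^*(Tx+b)\}$ on $B^*$, verify convexity as a partial infimum of a jointly convex function, use the constraint qualification \eqref{condition_Adom} to bound $v$ above near the origin (hence establish local Lipschitz continuity and $\partial v(0)\neq\emptyset$), compute $v^*$ by the change of variable $c=Tx+b$, and invoke Fenchel--Moreau. You also correctly flag the implicit identification $\psi=\psi^{**}$ needed to pass from the biconjugate to the dual objective in \eqref{min_max}.

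The one point you gloss over is where the dual variable lives. Your perturbation is taken in $B^*$, so $v^*$ is naturally a function on $(B^*)^*=B^{**}$, and the subgradient extracted from $\partial v(0)$ is a priori an element of $B^{**}$, not of $B$. When you write $v(0)=v^{**}(0)=\sup_y\{-v^*(y)\}$ and match this to the right-hand side of \eqref{min_max}, you are silently trading a supremum over $B^{**}$ for one over $B$; these need not agree, and Fenchel--Moreau relative to the predual pairing would require weak* (not merely norm) lower semicontinuity of $v$ at the origin, which the constraint qualification does not directly furnish. This gap closes when $B$ is reflexive, which covers every use the paper makes of the theorem (the relevant $B$ are weighted $L^2$ and Sobolev spaces, hence Hilbert), and it also reflects a slight re-parameterization by the authors: Borwein--Zhu's Theorem 4.4.3 takes $T:A\to B$ and places the dual over $B^*$, whereas the paper takes $T:A\to B^*$ and writes the dual over $B$, which in full generality should be $B^{**}$. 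If one wants the dual variable to land in $B$ from the start, the cleaner route is the Lagrangian/minimax formulation $\inf_x\sup_{y\in B}\{\phi(x)+\langle Tx,y\rangle-\psi(y)\}$, in which the dual variable indexes the supremum defining $\psi^*$ and is therefore intrinsically in $B$; one then justifies the $\inf$--$\sup$ interchange by a decoupling or sandwich argument rather than by biconjugation.
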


\medskip

{However, without the quadratic bounds \eqref{quadratic bounds},
the conditions for strong duality to hold, either Corollary 16A(b) in \cite{Rockafellar74} or \eqref{condition_Adom}, become tricky to verify. Readers may try, for instance, the case $c(y) = |y|^4$. Instead, in the following, we show a strategy to prove Theorem \ref{thm:strong_duality} via (i) demonstrating the strong duality theory for approximated primal and dual problems and (ii) taking the limit at both sides. }

Denote by 
\begin{align*}
\Phi_{\varepsilon}[u] 
&:= \Phi[u] - \varepsilon \|Du\|_{{L^2_f}(X;\R^n)} 
\\&= \int_{X} \left(x\cdot Du(x) - u(x) - c(Du(x))\right) f(x) dx - \varepsilon \langle |Du|^2 \rangle^{\frac{1}{2}}_f
\end{align*} and by 
\[\Gamma_{\varepsilon} : = \bigcap_{u \in \mathcal{U}} \left\{G \in \mX  \Big| \int_{X} \left(x\cdot Du(x) - u(x) -  G(x)\cdot Du(x) \right) f(x) dx \le \varepsilon \langle |Du|^2 \rangle^{\frac{1}{2}}_f \right\}.
\]
{We apply the Fenchel-Rockafellar duality theorem to a perturbed version of both problems:}

\begin{theorem}[Strong duality for perturbed problems]\label{thm:strong_duality_a}
	Let $\varepsilon >0$. {Under Assumption 1 and 2,}
	\begin{flalign}\label{eqn:strong_duality_a}
		\sup_{u \in \mathcal{U}}  \Phi_{\varepsilon}[u] = \inf_{G \in \Gamma_{\varepsilon}} \langle c^* \circ G \rangle_f.
	\end{flalign}
	{Moreover, the primal supremum and dual infimum are both attained.}	
\end{theorem}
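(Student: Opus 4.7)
The plan is to recast the perturbed primal as a Fenchel--Rockafellar infimum and invoke Theorem~\ref{thm:F-R duality}. I would take $A = H^1_f$ and $B = \mX$ (both Hilbert), let $T:A\to B$ be the gradient operator $Tu := Du$, and define
\begin{equation*}
\phi(u) := \langle u\rangle_f + I_{\mathcal U}(u), \qquad \psi^*(G) := \langle c\circ G - x\cdot G\rangle_f + \varepsilon \|G\|_{L^2_f},
\end{equation*}
both convex, lsc, and proper. A direct check gives $\phi(u)+\psi^*(Du) = -\Phi_\varepsilon[u]$ for $u\in\mathcal U$, so the left-hand side of \eqref{min_max} equals $-\sup_{u\in\mathcal U}\Phi_\varepsilon[u]$.

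For the right-hand side, I would first observe that $\phi^*(T^*G) = \sup_{u\in\mathcal U}\langle G\cdot Du - u\rangle_f$ lies in $\{0,+\infty\}$ by the cone argument ($\mathcal U$ is a convex cone containing $0$), vanishing precisely when $x-G\in\Gamma$. Next, decomposing $\psi^* = \alpha + \beta$ with $\alpha(G) = \langle c\circ G - x\cdot G\rangle_f$ and continuous $\beta(G) = \varepsilon\|G\|_{L^2_f}$, the pointwise identities $\alpha^*(H) = \langle c^*(H+x)\rangle_f$ and $\beta^* = I_{\bar B_\varepsilon(0)}$ combined with the infimal-convolution identity $(\alpha+\beta)^* = \alpha^*\,\square\,\beta^*$ give
\begin{equation*}
\psi(H) = \inf_{\|K\|_{L^2_f}\le\varepsilon}\bigl\langle c^*(H-K+x)\bigr\rangle_f.
\end{equation*}
Changing variables via $G^\flat := x - G$ and $\tilde G := G^\flat - K$ then collapses the Fenchel dual to $-\inf_{\tilde G\in\Gamma + \bar B_\varepsilon(0)}\langle c^*\circ\tilde G\rangle_f$. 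Since $\Gamma + \bar B_\varepsilon(0)\subset\Gamma_\varepsilon$ by Cauchy--Schwarz (if $\tilde G = G^\flat + K$ with $G^\flat\in\Gamma$ and $\|K\|_{L^2_f}\le\varepsilon$, then $\int((x-\tilde G)\cdot Du - u) f\,dx \le -\int K\cdot Du\, f\,dx \le \varepsilon\|Du\|_{L^2_f}$), the Fenchel--Rockafellar equality combined with the weak duality of Proposition~\ref{P:weak duality} produces the sandwich
\begin{equation*}
\sup_{u\in\mathcal U}\Phi_\varepsilon[u] \le \inf_{G\in\Gamma_\varepsilon}\langle c^*\circ G\rangle_f \le \inf_{\tilde G\in\Gamma + \bar B_\varepsilon(0)}\langle c^*\circ\tilde G\rangle_f = \sup_{u\in\mathcal U}\Phi_\varepsilon[u],
\end{equation*}
forcing \eqref{eqn:strong_duality_a}.

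The main obstacle will be verifying the qualification condition \eqref{condition_Adom}, equivalently continuity of $G\mapsto\langle c\circ G\rangle_f$ on $\mX$. This is automatic under the quadratic bounds \eqref{quadratic bounds}, but can fail under Assumption 2 alone (as with $c(y)=|y|^4$). To bypass this I would first approximate $c$ from below by convex $C^1$ costs $c_M$ agreeing with $c$ on $\{|y|\le M\}$ and extended with quadratic growth outside; the qualification is then automatic for each $c_M$, $\Gamma_\varepsilon$ is unaffected, and Fenchel--Rockafellar delivers \eqref{eqn:strong_duality_a} with $c_M$ in place of $c$. Monotone convergence $c_M\uparrow c$ (hence $c^*_M\downarrow c^*$), combined with the coercivity supplied by Assumption 2's quadratic lower bound and the $-\varepsilon\|Du\|_{L^2_f}$ penalty, should let the $M\to\infty$ limit transfer the identity to $c$. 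Attainment of both extrema then follows by the direct method: along a maximizing sequence in $\mathcal U$, the quadratic lower bound on $c$, the $\varepsilon$-penalty, $u\ge 0$, and Assumption 1 bound $\|u_n\|_{H^1_f}$, so weak precompactness together with closedness and convexity of $\mathcal U$ and upper semicontinuity of $\Phi_\varepsilon$ extract a maximizer; symmetrically, a minimizing sequence in the weakly closed convex set $\Gamma_\varepsilon$ is uniformly bounded in $\mX$ by coercivity of $c^*$, yielding a minimizer by weak lower semicontinuity.
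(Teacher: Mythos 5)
Your Fenchel–Rockafellar bookkeeping (the cone argument for $\phi^*$, the infimal convolution $(\alpha+\beta)^*=\alpha^*\,\square\,\beta^*$ giving $\beta^*=I_{\bar B_\varepsilon(0)}$, and the change of variables collapsing the dual to $\Gamma+\bar B_\varepsilon(0)$, sandwiched with $\Gamma+\bar B_\varepsilon(0)\subset\Gamma_\varepsilon$ and weak duality) is all correct, and you rightly identify that the constraint qualification for this decomposition boils down to norm-continuity of $G\mapsto\langle c\circ G\rangle_f$ on $\mX$, which fails under Assumption 2 alone. The paper, however, takes a genuinely different route that makes the qualification automatic and therefore never needs a cost approximation. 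The key difference is which factor is assigned to which slot of Theorem~\ref{thm:F-R duality}: the paper sets $A=\mX$, $B=\dot H^1_f$, $\phi(G)=\langle c^*\circ G\rangle_f$, and pushes the $\varepsilon$-penalty into $\psi_\varepsilon(u)=\int(x\cdot Du-u)f\,dx+\varepsilon\|Du\|_{L^2_f}+I_{-\mathcal U}(u)$, with $T$ the pairing $G\mapsto\langle\cdot,TG\rangle_{\dot H^1_f}=\int G\cdot D(\cdot)\,f\,dx$. Then $\phi^*(T^*u)=\langle c\circ Du\rangle_f$ while $(\psi_\varepsilon)^*(TG)$ is precisely the indicator of $\Gamma_\varepsilon$, so the continuity requirement \eqref{condition_Adom} lands on $(\psi_\varepsilon)^*$, not on the cost. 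And this is exactly why the perturbation was introduced: because $u\ge 0$, the identity map $G_0=x$ satisfies $\int((x-G)\cdot Du-u)f\,dx\le\|G-x\|_{L^2_f}\|Du\|_{L^2_f}\le\varepsilon\|Du\|_{L^2_f}$ whenever $\|G-x\|_{L^2_f}\le\varepsilon$, so $\bar B_\varepsilon(x)\subset\Gamma_\varepsilon$ and the indicator is continuous at $Tx$. No hypotheses on $D^2c$ beyond Assumption 2 are needed. Your placement of the penalty in $\psi^*(G)$ rather than $\psi_\varepsilon(u)$ is what forces you into the $c_M$-approximation.

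The approximation route you propose is not wrong in spirit, but the final limit is a real gap, not a routine step. After establishing $\sup_{\mathcal U}\Phi_\varepsilon^{(M)}=\inf_{\Gamma_\varepsilon}\langle c_M^*\circ G\rangle_f$ for each $M$ (which does follow from your argument once the weak duality for the perturbed pair $(\Phi_\varepsilon^{(M)},\Gamma_\varepsilon)$ is checked — Proposition~\ref{P:weak duality} as written is for $(\Phi,\Gamma)$), you need $\lim_{M\to\infty}\sup_{\mathcal U}\Phi_\varepsilon^{(M)}=\sup_{\mathcal U}\Phi_\varepsilon$. Monotonicity of $c_M\uparrow c$ only gives $\sup\Phi_\varepsilon^{(M)}\ge\sup\Phi_\varepsilon$ for free; the reverse limit does not follow from monotone convergence applied inside the supremum, because the sup of a pointwise decreasing sequence of functionals need not converge to the sup of the limit functional. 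One has to extract the maximizers $\bar u_M$, use coercivity and Carlier's compactness to obtain a convergent subsequence $D\bar u_M\to D\tilde u$ a.e.\ and weakly in $L^2_f$, and then run a Fatou argument on $\int c_M(D\bar u_M)f\,dx$ exploiting that $c_M\equiv c$ on $\{|y|\le M\}$ — essentially reproducing the compactness machinery that the paper deploys only once, in passing from $\varepsilon>0$ to $\varepsilon=0$ in Theorem~\ref{thm:strong_duality}. So your proof would end up longer and would duplicate effort; the paper's role reversal buys you the unperturbed cost for free.
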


{See Appendix \ref{app:Proofs of strong duality} for a detailed proof.  Similar to the results in Corollary \ref{Cor:complementary_slackness} and \ref{C:unique}, we have the following result for the perturbed problems.   
}

\begin{remark}[Optimality conditions and uniqueness] \label{rmk:optimality_condition} {Suppose $c$ is strictly convex and continuously differentiable, $X$ is convex, and $f$ is positive on $\Int(X)$.} 
	Assume $\bu_{\varepsilon}$ and $\bG_{\varepsilon}$ are the corresponding optimizers of the $\varepsilon$-perturbed maximization and minimization problem \eqref{eqn:strong_duality_a}, respectively. Then the same proofs as Corollaries \ref{Cor:complementary_slackness} and \ref{C:unique} yield
	\begin{itemize}
		\item $\bG_{\varepsilon}(x) = Dc(D\bu_{\varepsilon}(x))$ holds $f$-almost surely.
		\item $\int_{X} \left(x\cdot D\bu_{\varepsilon}(x) - \bu_{\varepsilon}(x) -   \bG_{\varepsilon}(x) \cdot D\bu_{\varepsilon}(x) \right) f(x) dx = \varepsilon \langle |D\bu_{\varepsilon}|^2 \rangle^{\frac{1}{2}}_f$.
		\item The optimizers in \eqref{eqn:strong_duality_a} are uniquely determined $f$-a.e.
	\end{itemize}
\end{remark}

{Below, we provide a sketch proof of Theorem \ref{thm:strong_duality} where one can see that the function spaces defined in Section \ref{subsection:notation} are essential for applying the compactness results in \cite{Carlier02} to take limits. For a detailed proof, see Appendix \ref{app:Proofs of strong duality}.
}

\medskip

{\noindent{\bf Sketch proof of Theorem \ref{thm:strong_duality}}:	For each $\varepsilon \ll 1$, denote by $\bu_{\varepsilon}$ and $\bG_{\varepsilon}$  an optimizer of each side in \eqref{eqn:strong_duality_a}, respectively. From the primal problem formulation, one can show that $\bu_{\varepsilon}$ lives in $\mathcal{U}_1$, a bounded subset  of $\mathcal{U} \cap W^{1,2}$. The compactness of $\mathcal{U}_1$ provided by \cite{Carlier02} shows that the sequence has a limit $\tilde{u}$ (up to a subsequence, same below) such that the derivatives of the sequences converge to $D\tilde u$. Then complementary slackness in Remark \ref{rmk:optimality_condition} implies $\{\bG_{\varepsilon}\}_{\varepsilon}$ converges to $Dc(D\tilde u)$. Then, we show that $Dc(D\tilde u)$ is a minimizer to the dual problem.
	
	On the other hand, the compactness of $\mathcal{U}_1$, together with upper semi-continuity of $\Phi$, implies the existence of a maximizer $\bu$ of $\Phi$ in $\mathcal{U}_1$. This implies that $\bu$ is also a maximizer of the primal problem. Taking limit of \eqref{eqn:strong_duality_a} yields \eqref{eqn:strong_duality_00}.\qed
}

\section{Application to monopolist nonlinear pricing on the square}\label{section:applications}

In this section, we apply the duality theory above to the 2D square model of \cite{RochetChone98}, whose proposed solution to this model provided {a seminal example of the optimality of product-line} bunching in multidimensions, beyond \cite{Armstrong96}'s desirability of exclusion.  More generally, Rochet and Chon\'e gave an abstract characterization of the unique optimal solution to the multidimensional analog of \cite{MussaRosen78}'s problem in terms of the existence of suitable Lagrange multipliers
whose positive and negative parts are in convex order on each bunched group of consumers.   
Unfortunately, the consequences of this characterization are delicate to work out in examples. Indeed,  some aspects of their predicted solution $\bar u$ to the 2D square model turn out not to be supported by \cite{Mirebeau16}'s subsequent numerics, which motivated us to show that the proposed solution of Rochet and Chon\'e is not self-consistent in \cite{McCannZhang23c}: it contradicts their own continuity claim for the assignment map $D \bar u:X \longrightarrow Y$, also confirmed up to the boundary of the square by \cite{CarlierLachandRobert01}.  This regularity was subsequently improved in the interior of $X$ 
by \cite{CaffarelliLions06+}, whose results combined with Carlier and Lachand-Robert's yield
\begin{equation}\label{regularity}
\bu \in C^1(X) \cap C^{1,1}_{loc}(\Int X);
\end{equation}
(similar regularity is now known to hold
in the more general setting of \cite{FigalliKimMcCann11}
by results of \cite{Chen23} and \cite{mccann2023c}).
Using our aforementioned duality along with perturbation techniques from the calculus of variations,  we go on to describe how the conjectured solution can be modified to restore its consistency with both theoretical and computational predictions, by allowing the bunched lines of consumers freedom to vary in their direction as well as their length.  This leads to a novel free boundary problem in partial differential equations, foreshadowed in \cite{McCannZhang23c}
and detailed below.

In Section~\ref{section:2d_model}, we introduce Rochet-Chon\'e's 2D square model and our proposed solution in terms of a free boundary problem which allows an overlooked form of bunching to be selected by a significant fraction of the agents.
Section~\ref{section:verification_b} 
applies the strong duality theory of Section~\ref{section:duality} above to show
 that any convex solution to this free boundary problem is indeed the unique maximizer of Rochet-Chon\'e's 2D square model. 
Section \ref{subsection:RC} contrasts our solution to the one originally proposed by \cite{RochetChone98}. 
In Section~\ref{section:interpretation}, we interpret our results economically and relate them to other recent developments.
Finally, in Section~\ref{section:sketch_optimal_solution} we derive our free boundary problem from a solution ansatz using the calculus of variations, to show that the optimal payoff $\bar u$ satisfies our free boundary problem as soon as it is consistent with the ansatz
(which is necessarily more general than that of \cite{RochetChone98}).  Although the boundary of the region separating bunching from customization is still selected by matching the values and derivatives of the solution of an ODE (in the bunching region) to a PDE (in the unbunched region),  the geometry of our bunching is more complicated than Rochet and Chon\'e proposed
and the ODE \eqref{slope E-L}--\eqref{offset E-L} which govern it derived below and announced in \cite{McCannZhang23c} are new.

\subsection{A free-boundary reformulation of Rochet-Chon\'e 2D square model}\label{section:2d_model}

For $a\ge 0$, let the square $X = [a, a+1]^2$ denote the set of consumer types equipped with density $f(x) \equiv 1$ on $X$. For each product $y \in Y = [0,\infty)^2$, let $c(y) = \frac{1}{2}|y|^2$ represent the manufacturing cost. The outside option is $(0,0) \in Y$ whose price is set to be no greater than $0$.  Thus the monopolists problem \eqref{admissable_old}--\eqref{problem:monopolist_2old} becomes
\begin{equation}\label{admissable}
\mathcal{U} = \{u\in  H^1 \mid u \text{ is convex}, Du(X) \subset [0,\infty)^2, \text{ and } u \ge  u_{\emptyset} \equiv 0\}
\end{equation}
\begin{flalign}\label{problem:monopolist_2}
		\sup_{u \in \mathcal{U}} \left\{\Phi[u] : = \int_{X=[a,a+1]^2} \left( x \cdot Du(x) - u(x)  - \frac12 |Du(x)|^2 \right) dx\right\}.
\end{flalign}
Guided by theoretical and numerical evidence,  we follow the strategy of  \cite{RochetChone98}, by making a series of ad hoc assumptions to identify a candidate optimizer $\bu$ for \eqref{admissable}--\eqref{problem:monopolist_2},  whose optimality can then be confirmed by duality (thus affirming validity of the ad hoc assumptions a posteriori).

Any convex function $u$ is twice differentiable Lebesgue a.e., hence divides almost all of $X$ into three different regions $\Omega_0$, $\Omega_1$, and $\Omega_2$, according to the rank ($0,1$ or $2$) of its Hessian matrix 
$D^2 u$.   The uniqueness of the optimal payoff $u=\bu$ established by \cite{RochetChone98} (also implied by {Corollary \ref{C:unique}}) ensures the resulting regions are symmetrical under reflection
$x_1 \leftrightarrow x_2$ through the diagonal.  Since they can be interpreted as an excluded region $\Omega_0$ of low types (where the participation constraint binds),  a bunching region $\Omega_1$ of intermediate types (where incentive compatibility, hence the convexity constraint on $\bar u$, binds),  and an unconstrained region $\Omega_2$ of high types, we shall assume they are ordered from the lower-left to the upper-right corner of the square. 
 More precisely, we assume 
there are upper semicontinuous functions $t_{i.5}:[-1,1] \longrightarrow [2a,2a+2]$ over the antidiagonal,
satisfying $t_{0.5}<t_{1.5}$, which parameterize the boundaries between these regions:
\begin{align}
\label{Omega_0}
\{\Delta \bu=0\} \subset \Omega_0
&=\{ (x_1,x_2) \in X : x_1 + x_2 \le  t_{0.5}(x_1-x_2)\} 
\\ \label{Omega_1}
\{ \det D^2 \bu =0 < \Delta \bu\} \subset \Omega_1
&= \{ (x_1,x_2) \in X :  t_{0.5}(x_1-x_2)<x_1 + x_2 \le  t_{1.5}(x_1-x_2)\}
\\ \label{Omega_2}
\{\det D^2\bu >0 \} \subset \Omega_2
&= \{ (x_1,x_2) \in X : t_{1.5}(x_1-x_2) < x_1 + x_2 \}
\end{align}
with $\Omega_i$ having connected interior and $\Omega_i \subset \cl[\Int \Omega_i]$ for each $i\in\{0,1,2\}$.
Although the geometry encoded in this assumption can probably be relaxed to account for subdomains $\Omega_i$ with boundaries parameterized in different ways, {we do not know how to relax or confirm the topology encoded in this assumption: namely that $\Omega_1$ separates $\Omega_0$ from $\Omega_2$ and that all three have connected interiors,} as suggested by \cite{Mirebeau16} and others' numerics.
 In the region $\Omega_1$, it then follows that all bunches 
$\Omega_1 \cap (D\bu)^{-1}(y)$ are given by line segments with endpoints on the boundary
of $\Omega_1$, meaning the graph of $\bar u$ is a {\em ruled surface}:
for $u \in C^2(\Int X)$ this is a classical fact, which is extended to the lower regularity \eqref{regularity} available in our context
by Cale Rankin in Lemma \ref{L:ruled surfaces} below.

So far,  our assumptions are consistent with all available theoretical and numerical evidence concerning the problem, but we shall now depart from \cite{RochetChone98},  who suppose all of the bunches in $\Omega_1$ have endpoints on $\p X$ and cross the diagonal,  and hence that 
$\bu(x)$ depends only on $t=x_1+x_2$ in $\Omega_1$.  Although the affine behaviour of $u$ in the interior of $\Omega_0$ rules out the possibility of bunches ending on 
$\p \Omega_0 \setminus \p X$,  it is perfectly plausible that some of the bunches
in $\Omega_1$ have endpoints on $\p \Omega_2$.  Therefore, inspired by \cite{Mirebeau16}'s numerics,  we allow for the possibility that some of the bunches in $\Omega_1$ have one endpoint on $\p X$ and the other on $\p \Omega_2$.    More precisely, we postulate the existence of a constant $t_{1.0} \in [t_{0.5}(s),t_{1.5}(s)]$
such that $\bar u$ depends only on $t=x_1+x_2$ in the subdomain
\begin{equation}\label{Omega_1^0}
\Omega_1^0 
:= \{ (x_1,x_2) \in \Omega_1 : x_1 + x_2 \in (t_{0.5}(x_1-x_2),t_{1.0}]\},
\end{equation}
but depends on varying convex combinations of $x_1$ and $x_2$ in the complementary ranges 
\begin{equation}\label{Omega_1^pm}
\Omega_1^\pm := \{(x_1,x_2) \in \Omega_1 \setminus \Omega_1^0  : \pm (x_1-x_2) \ge 0 \}
\end{equation}
of $\Omega_1$,  below and above the diagonal.

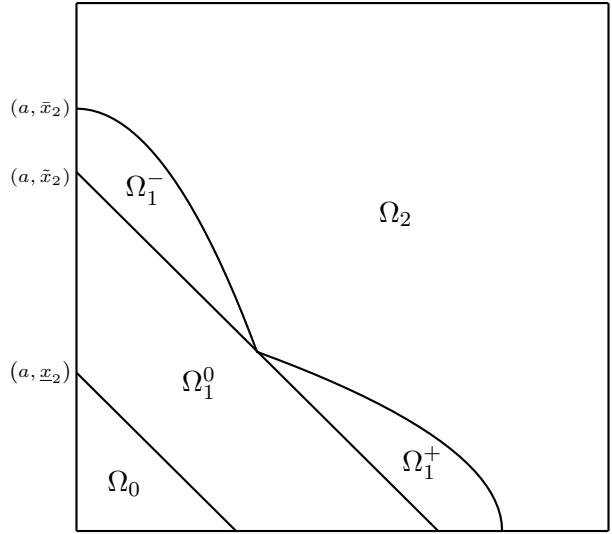
\begin{figure}[h]
	\begin{tikzpicture}[thick, scale=7]
		\draw[domain=0:1] plot (0, \x) ;
		\draw[domain=0:1] plot (\x, 1) ;
		\draw[domain=0:1] plot (1,\x) ;
		\draw[domain=0:1] plot (\x, 0) ;
		\draw[domain=0:0.34] plot (\x,  -1.2+2*sqrt{1-\x*\x*4}) ;
		\draw[domain=0:0.34] plot (-1.2+2*sqrt{1-\x*\x*4}, \x);
		\draw[domain=0:0.3] plot (\x, 0.3-\x);
		\draw[domain = 0:0.68] plot (\x, 0.68-\x);
		
		\node at (0.09, 0.09) {$\Omega_0$};
		\node at (0.23, 0.28) { $\Omega_{1}^{0}$};
		\node at (0.13, 0.65) {$\Omega_{1}^{-}$};
		\node at (0.65, 0.13) { $\Omega_{1}^{+}$};
		\node at (0.6, 0.6) {$\Omega_{2}$};
		\node at (-0.068, 0.3) {\tiny $\left(a,\underline x_2\right)$};
		\node at (-0.068, 0.67) {\tiny $\left(a,\tilde x_2\right)$};
		\node at (-0.068, 0.8) {\tiny $\left(a,\bar x_2\right)$};
	\end{tikzpicture}
	\caption{Partition of $X$}
	\label{fig:2}
\end{figure}
By $x_1 \leftrightarrow x_2$ symmetry,  it suffices to describe $\bar u$ in just one of these two regions,  say $\Omega^-_1$.  From Lemma \ref{L:ruled surfaces}, we know $\Omega_{1}^{-}$ will be foliated by line segments along which $\bu$ is affine, also called {\em isochoice sets},  {\em bunches,} or {\em leaves} of the foliation.  
It will prove convenient to parameterize the leaves of this foliation by their angle $\theta$ to the horizontal and their lengths $R(\theta)$.  The explicit formulation of our free boundary problem for the solution $\bar u$ to Rochet and Chon\'e's square example 
requires us to work out some details of this parameterization to express equations \eqref{slope BC}--\eqref{offset E-L} below
for the slope $m(\theta)$ and (unknown) left boundary value $b(\theta)$ of $\bar u$ along the leaves of this foliation in $\Omega_1^-$.

\begin{figure}[h]
	\begin{tikzpicture}[thick, scale=15]
		\draw[domain=0.68:0.8] plot (0, \x) ;
		\draw[domain=0:0.34] plot (\x,  -1.2+2*sqrt{1-\x*\x*4}) ;
		\draw[domain = 0:0.34] plot (\x, 0.68-\x);
		\draw[dashed, domain=0:0.3] plot(\x, 0.71-0.9*\x);
		
		\node at (0.05, 0.75) {$\Omega_{1}^{-}$};
		\node at (-0.039, 0.67) {\footnotesize $(a, \tilde x_2)$};
		\node at (-0.039, 0.80) {\footnotesize $(a, \bar x_2)$};
		\node at (-0.052, 0.71) {\footnotesize $(a, h(\theta))$};
		\node at (0.21, 0.55) {$\Omega_{1}^{-}(\theta)$};
	\end{tikzpicture}
	\caption{$\Omega_{1}^{-}$}
	\label{fig:3}
\end{figure}
	
	For each $x \in \Omega_{1}^{-}$,  
	let $\theta(x)$ denote the angle the line segment through $x$ makes with the horizontal
	and $r(x)$ the distance along it relative to some fixed point $(a, h(\theta))$ which 
	is the endpoint of the segment on $\partial X$.  Inverting this change of variables yields 
\begin{equation}\label{leaf parameterization}
	\bx(r,\theta) = (a, h(\theta)) + r(\cos \theta,\sin \theta),
\end{equation}
with the Jacobian of this transformation having inverse
\begin{equation}\label{Jacobian}
\frac{\p (r,\theta)}{\p (\bar x_1,\bar x_2)} =
 \left(\begin{array}{cc}
\cos\theta & -r \sin\theta
\\ \sin\theta & h' +r \cos\theta
\end{array}\right)^{-1}
=\frac{1}{h'\cos\theta + r}
\left(\begin{array}{cc}
h'+r\cos\theta & r \sin\theta
\\ -\sin\theta & \cos\theta
\end{array}\right)
\end{equation}
so that
\begin{align}
\label{area element} 
 dx_1 dx_2 
&= (r+ h'\cos\theta) dr d\theta.
\end{align}
	
	The fact that $\bu$ is affine along each such segment means there exist real functions $m(\theta)$ and $b(\theta)$
	(representing the slope and boundary value of $u$ along the segment passing through $(a,h(\theta))$ at angle $\theta$ to the horizontal), so that
\begin{equation}\label{ruled surface}
	\tilde \bu(r,\theta) := \bu(\bx(r,\theta)) =: b(\theta) + r m(\theta).
\end{equation}
	Differentiating with respect to $r$ and $\theta$ yields
	\begin{eqnarray}
		\label{sys1}     m(\theta) &=& \quad \cos\theta \frac{\p \bu}{\p x_1} (\bx(r, \theta)) + \sin\theta \frac{\p \bu}{\p x_2}(\bx(r, \theta));
		\\ \label{sys2}  m'(\theta) &=& -\sin\theta \frac{\p \bu}{\p x_1}(\bx(r, \theta)) + \cos\theta \frac{\p \bu}{\p x_2}(\bx(r, \theta));
		\\ 
\nonumber		 b'(\theta) &=& \qquad h' (\theta) \frac{\p \bu}{\p x_2}(\bx(r, \theta))
	\end{eqnarray}
while inverting \eqref{sys1}--\eqref{sys2} gives
	\begin{equation}
		 \label{syss1}     
		 D\bar u \equiv
		\left(
		\begin{array}{c}
			\frac{\p \bu}{\p x_1} (\bx(r, \theta))
			\\ \frac{\p \bu}{\p x_2}(\bx(r, \theta))
		\end{array}
		\right)
		=  \left(
		\begin{array}{cc}
			\cos\theta & -\sin\theta
			\\ \sin\theta & \cos\theta
		\end{array}
		\right)
		\left(
		\begin{array}{c}
			m(\theta)
			\\ m'(\theta)
		\end{array}
		\right).
	\end{equation}
	Therefore, $b$ must satisfy the consistency condition
	\begin{equation}\label{sys3_2}
		 b'(\theta) =  h'(\theta) \frac{\p \bu}{\p x_2}(\bx(r, \theta))  
		 = h'(\theta) [m(\theta) \sin\theta + m'(\theta) \cos\theta].
	\end{equation}
	Moreover, \eqref{syss1} also implies $ \left(\frac{\p \bu}{\p x_1}, \frac{\p \bu}{\p x_2}\right) (\bx( \cdot , \theta))$ is independent of {$r$} for each $\theta$, which coincides with the fact that all the types of consumers on this line segment $\bx(\cdot, \theta)$ would choose the same product $Du\circ \bx(\cdot, \theta)$.  On $\Omega_1^-$, combining \eqref{syss1} and \eqref{Jacobian} with the chain rule yields
\begin{equation}\label{Hessian}
D^2 \bar u(\bar x(r,\theta)) \equiv 
\begin{pmatrix}
\frac{\p^2 \bu}{\p x_1^2} &\frac{\p^2 \bu}{\p x_1 \p x_2}
\\ \frac{\p^2 \bu}{\p x_2 \p x_1} &\frac{\p^2 \bu}{\p x_2^2}
\end{pmatrix}
=\frac{m''+m}{h'\cos\theta + r}
\begin{pmatrix}
				\sin^2\theta & -\sin\theta \cos\theta \\
				-\sin\theta \cos \theta & \cos^2\theta 
			\end{pmatrix}.
\end{equation}	
	
We now construct the optimal solution $\bu=u_i$ on each set $\Omega_i$ as follows. Given 
$t_{1.0}=a+\tilde x_2$ with $\underline{h}=\tilde x_2 \in [a,a+1]$ to be determined:

\begin{enumerate}
	\item[i).] On $\Omega_{0}= \{ (x_1, x_2) \in [a,a+1]^2 : x_1 + x_2 \le a + \underline x_2 \}$ with $\underline x_2 = \frac{a+\sqrt{4a^2+6}}{3}$, one has
	\begin{flalign}\label{eqn:solution_Omega_0}
		\bu \equiv 0.
	\end{flalign}

	\item[ii).] On $\Omega_{1}^{0} = \{ (x_1, x_2) \in [a,a+1]^2 :  a + \underline x_2 \le x_1 + x_2 \le a + \tilde x_2 \}$, we have
	\begin{flalign}\label{eqn:solution_Omega_1,0}
				\bu(x_1, x_2) =& \frac{3}{8}(x_1+x_2)^2 -\frac{a}{2}(x_1+x_2) - \frac{1}{2} \ln(x_1 + x_2 -2a) + C_0,
	\end{flalign}
	where $C_0 = - \frac{2a^2 +3+2a\sqrt{4a^2+6}}{12} + \frac{1}{2}\ln\left(\frac{-2a+\sqrt{4a^2+6}}{3}\right)$.
	From \eqref{eqn:solution_Omega_1,0} we can calculate the value of $\bu$ and $D\bu$ on $	\p \Omega_{1}^0 \cap \p \Omega_1^\pm$, which gives the boundary conditions appearing on the right hand side of \eqref{slope BC} and \eqref{offset E-L} below, in view of the known regularity \eqref{regularity}.

	\item[iii).] Index each isochoice segment in $\Omega_1^-$ by its angle $\theta\in (-\frac{\pi}{4},\bar \theta]$ where $\bar \theta=\frac{\pi}{2}$ for convenience.
Let $(a,{h(\theta)})$ denote its left-hand endpoint and parameterize the segment by 
distance ${  r} \in [0,R(\theta)]$ to the point $(a,h(\theta))$. Along this segment of length $R(\theta)$ assume
\begin{equation}\label{D:m,b}
u_1^-\Big((a,h(\theta)) + { r} (\cos \theta,  \sin \theta)\Big) = {m(\theta)}{  r} + {b(\theta)}.
\end{equation}

For $\underline{h}: = \tilde x_2  \in [a,a+1]$ and  ${R}:
\left[-\frac\pi4,{\frac \pi2}\right] \to \left[0, \sqrt{2}\right)$ upper semicontinuous with $R\left(-\frac\pi4 \right) = \frac1{\sqrt2} (\underline{h} -a),$ solve

\begin{equation}\label{slope BC}
\textstyle  
m( -\frac \pi 4) = 0, \qquad m'(-\frac \pi 4) = \textstyle \sqrt{2}\ { \frac{\p \bu}{\p x_1}}\left(\frac{a + {\underline{h}}}{2}, \frac{a + {\underline{h}}}{2}\right) \qquad  \mbox{\rm such that}
\end{equation}
\begin{equation}\label{slope E-L}
({m''(\theta) + m(\theta)} - {2R(\theta)})({m'(\theta)} \sin \theta - {m(\theta)} \cos \theta +a) = \frac32 {R^2(\theta) \cos\theta}.
\end{equation}
Then set
\begin{eqnarray}
\label{D:h}
{h(\theta)} &=& {  \underline{h}} + \frac13 \int_{-\pi/4}^{\theta} (m''(\vartheta) + m(\vartheta) - 2{  R(\vartheta)}) \frac{d\vartheta}{\cos \vartheta},
\\ 
{b(\theta)} &=&  
\bu(a,{  \underline{h}})+ \int_{-\pi/4}^{\theta} (m'(\vartheta) \cos \vartheta + m(\vartheta) \sin \vartheta) h'(\vartheta) d\vartheta.
\label{offset E-L}
\end{eqnarray}
	
	Given $\bar h$ and $R$, the triple $(m, b, h)$ satisfying 
	\eqref{slope E-L}--\eqref{offset E-L} exists and is unique provided 
	$m'(\theta)\sin\theta - m(\theta)\cos\theta + a \ne 0$ and $R$ is locally Lipschitz where positive.  
	Subject to these conditions,  the shape of $\Omega_{1}^{-}$ and the value of $u_1^-$ there will be uniquely determined by any $\underline{h}$ and $R:\left(-\frac\pi4,\frac\pi2\right] \to \left[0, \sqrt{2}\right)$.  We henceforth restrict our attention to choices of $\underline{h}$ and $R$ for which the resulting set $\Omega_1^-$ 
	lies above the diagonal.  In this case $\Omega_{1}^{+}$ and the value of $\bar u = u_1^+$ on $\Omega_1^+$ are determined 
	by reflection symmetry $x_1 \leftrightarrow x_2$ across the diagonal.
	Together, $u_1^\pm$ and/or \eqref{eqn:solution_Omega_1,0} define $\bar u=u_1$ on $\Omega_1$ and provide the boundary data on 
	$\p \Omega_1 \cap \p \Omega_2$
for the Poisson equation \eqref{eqn:solution_Omega_2} below.
	\item[iv).] On $\Omega_{2} = \cl (X\setminus (\Omega_{0}\cup \Omega_1))$ where $\Omega_1 =\Omega_{1}^{0}\cup\Omega_{1}^{\pm}
$, solve:
	\begin{flalign}\label{eqn:solution_Omega_2}
	\begin{cases}
		\Delta u_2 = 3, & \text{ on } \Int(\Omega_2);\\
		(Du_2(x)-x)\cdot \vec{n}(x) = 0, & \text{ on } \p \Omega_2 \cap \p X;\\
		u_2 - u_1 = 0,  & \text{ on } 
		 \p \Omega_1 \cap \p\Omega_2.
	\end{cases}
\end{flalign}

\end{enumerate}

v). For $\underline{h}\in [a,a+1]$ and $R:\left[-\frac\pi4,\frac\pi2\right] \to \left[0, \sqrt{2}\right)$,  the mixed Dirichlet-Neumann Poisson problem \eqref{eqn:solution_Omega_2} has a unique solution $u_2$ as long as $\Omega_2$ is Lipschitz, as in 
\cite{Lieberman13}.
  We finally select $\underline{h}$ and $R:\left[-\frac\pi4,\frac\pi2\right] \to \left[0, \sqrt{2}\right)$, or equivalently the shape of $\Omega^-_1$ and hence the Lipschitz domain $\Omega_2$, by the additional requirement that $u_2$ satisfy the Neumann condition
\begin{equation}\label{eqn:solution_Omega_3}
	D(u_2-u_1) \cdot \vec{n}(x) =0, \text{ on } 
	\p \Omega_1 \cap \p\Omega_2. 
\end{equation}
This is the free boundary problem which needs to be solved;  \eqref{eqn:solution_Omega_3} is necessary for the disjointly defined functions $u_i$ on $\Omega_i$ to piece together to form $\bar u \in C^1(X)$, as required by \eqref{regularity}. 

Heuristically, the numbers of equations and unknowns coincide:  our freedom to select $\p \Omega_2 \setminus \p X$ is precisely constrained by the compatibility condition \eqref{eqn:solution_Omega_3} on it.
This suggests that the free boundary problem is neither over- nor underdetermined,
and should admit a solution: i.e. a quadruple
$(\bar h, R, u_{1}^{\pm}, u_2)$ that solves \eqref{slope E-L} -- \eqref{eqn:solution_Omega_3}, or equivalently a  triple $(\bar h,R, u)$ that solves \eqref{eqn:solution_Omega_0} -- \eqref{eqn:solution_Omega_3}.  If the resulting $u$ is admissible \eqref{admissable}, 
our next theorem shows it to be the unique optimal solution of the Rochet-Chon\'e model on the square.

\bigskip	
\subsection{Sufficiency: any convex solution of our free boundary problem is the unique optimizer} \label{section:verification_b}
The following theorem shows any solution to the free boundary problem described above which is admissible \eqref{admissable} is the unique optimal solution to the monopolist's profit maximization problem on the square. A complete proof with details is postponed to Appendix \ref{app:proof_of_T:verification_b}.
	\begin{theorem}[Free boundary solutions optimize if convex]
	\label{T:verification_b}
		If $\bu \in \mathcal{U}$ satisfies \eqref{eqn:solution_Omega_0} -- \eqref{eqn:solution_Omega_3} and both $\bar u$ and 
		$\Omega_2$ are Lipschitz, then $\bar u$ is the unique maximizer to \eqref{problem:monopolist_2}.  
	\end{theorem}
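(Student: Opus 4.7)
The plan is to use weak duality (Proposition~\ref{P:weak duality}) together with Remark~\ref{rmk:uniqueness} and Corollary~\ref{C:unique}: it suffices to exhibit a feasible $G\in\Gamma$ saturating the weak duality inequality against $\bar u$, i.e.\ satisfying the two complementary slackness conditions of Corollary~\ref{Cor:complementary_slackness}. Because $c(y)=\tfrac12|y|^2$ is strictly convex with $Dc(y)=y$, condition~1 forces the choice $G(x):=D\bar u(x)$ and the task reduces to verifying
\[
\text{(i)}\ \ L_{\bar u}[v]:=\int_X\bigl[(x-D\bar u)\cdot Dv-v\bigr]\,dx\le 0 \text{ for every } v\in\mathcal{U},
\qquad
\text{(ii)}\ \ L_{\bar u}[\bar u]=0.
\]
Statement (i) says $D\bar u\in\Gamma$ and (ii) is the second slackness condition; together with Corollary~\ref{C:unique} they identify $\bar u$ as the unique maximizer of~\eqref{problem:monopolist_2}.

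The common tool for (i) and (ii) is integration by parts. Since $\bar u$ is Lipschitz with $\bar u\in C^1(X)$ by~\eqref{regularity} (which our construction enforces via the Neumann matching~\eqref{eqn:solution_Omega_3} and the pointwise matching of $\bar u$ and $D\bar u$ across $\partial\Omega_0\cap\partial\Omega_1$ visible from~\eqref{eqn:solution_Omega_1,0}), and $\nabla\cdot x=2$, Green's identity yields
\[
L_{\bar u}[v]=\int_{\partial X}(x-D\bar u)\cdot\vec n\,v\,dS+\int_X(\Delta\bar u-3)\,v\,dx.
\]
The boundary term on $\partial X\cap\partial\Omega_2$ vanishes by the Neumann condition in~\eqref{eqn:solution_Omega_2}; on $\partial X\cap\partial\Omega_0$ it reduces to $\int x\cdot\vec n\,v\,dS$, which is $\le 0$ for $v\ge 0$ because $a\ge 0$ and $\vec n$ points outward from $[a,a+1]^2$ (and equals $0$ when $v=\bar u\equiv 0$ on $\Omega_0$). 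In the bulk, $\Omega_2$ contributes nothing since $\Delta\bar u=3$ there, while $\Omega_0$ contributes $-3\int_{\Omega_0}v\,dx\le 0$, with equality at $v=\bar u$. Hence all the genuine work concentrates on $\Omega_1$ and the portion of $\partial X$ it meets.

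On $\Omega_1$ we exploit the ruled-surface structure of Lemma~\ref{L:ruled surfaces}. In $\Omega_1^0$ the explicit form~\eqref{eqn:solution_Omega_1,0} reduces the remaining contribution to a direct one-dimensional computation in $t=x_1+x_2$. In $\Omega_1^\pm$ we change variables to $(r,\theta)$ via~\eqref{leaf parameterization} with Jacobian~\eqref{Jacobian}, so that $\bar u$ takes the affine form~\eqref{ruled surface} and its Hessian is given by the rank-one expression~\eqref{Hessian}. After this change of variables, the $\Omega_1^\pm$ contributions to $L_{\bar u}[v]$ become iterated integrals in $(r,\theta)$; performing the $r$-integration reduces everything to a single integral in $\theta$ whose integrand, for $v=\bar u$, is precisely the combination that the Euler--Lagrange ODE~\eqref{slope E-L}, together with the initial data~\eqref{slope BC} and the defining identities~\eqref{D:h}--\eqref{offset E-L}, forces to vanish pointwise in $\theta$. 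This gives~(ii). For~(i), the same reduction combined with convexity of $v$ and $Dv(X)\subset[0,\infty)^2$ yields a non-positive $\theta$-integrand, with the Neumann matching~\eqref{eqn:solution_Omega_3} absorbing the contribution across $\partial\Omega_1\cap\partial\Omega_2$ and the sign of $x\cdot\vec n$ handling the remaining piece of $\partial X\cap\partial\Omega_1^\pm$.

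The main obstacle is the $\Omega_1^\pm$ analysis for arbitrary $v\in\mathcal{U}$, since $v$ is not affine along the bunches. The key observation is that $D\bar u$ is constant along each leaf, so integrating $(x-D\bar u)\cdot Dv$ along a single bunch can be rewritten via the fundamental theorem of calculus as a combination of endpoint values of $v$ plus a term proportional to $v$ itself, whose coefficient--after integration in $\theta$ against the length element $(r+h'\cos\theta)$ from~\eqref{area element}--is exactly the left-hand side of~\eqref{slope E-L}. Verifying that the free-boundary data of Section~\ref{section:2d_model} are precisely tuned to produce this cancellation without sign loss, and that the endpoint contributions along $\partial X\cap\partial\Omega_1^\pm$ combine correctly with the $\Omega_0$ boundary term, is what the full proof deferred to Appendix~\ref{app:proof_of_T:verification_b} must carry out--showing that every equation of the free-boundary problem earns its keep.
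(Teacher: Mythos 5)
Your high-level architecture is correct and tracks the paper closely: invoke weak duality plus the complementary slackness conditions of Corollary~\ref{Cor:complementary_slackness} and uniqueness from Corollary~\ref{C:unique}, take $G := D\bar u$ (forced by $Dc = \mathrm{id}$), integrate by parts, dispatch $\Omega_0$ and $\Omega_2$, and reduce the remaining work to $\Omega_1^\pm$ in $(r,\theta)$ coordinates via \eqref{area element}--\eqref{Hessian}.

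However, your description of the $\Omega_1^\pm$ step mischaracterizes what actually carries the sign, and this is where the real content of the theorem lies. You write that ``performing the $r$-integration reduces everything to a single integral in $\theta$ whose integrand, for $v=\bar u$, is precisely the combination that the Euler--Lagrange ODE \eqref{slope E-L} \dots forces to vanish pointwise in $\theta$'' and that ``convexity of $v$ \dots yields a non-positive $\theta$-integrand.'' Neither claim matches the mechanism. The ODEs \eqref{slope E-L} and \eqref{D:h} do not kill the $\theta$-integrand pointwise; after using them to absorb the endpoint contributions from the integration by parts in $r$, the two $\Omega_1^\pm$ pieces combine into
\[
\int_{-\pi/4}^{\bar\theta}\int_0^{R(\theta)} \zeta(r,\theta)\,\frac{\partial \tilde U}{\partial r}(r,\theta)\,dr\,d\theta,
\qquad
\zeta(r,\theta)=\tfrac12 R^2(\theta)-2R(\theta)r+\tfrac32 r^2,
\]
where $\tilde U(r,\theta)=u(\bar x_1,\bar x_2)+u(\bar x_2,\bar x_1)$. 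The sign is then established not pointwise in $\theta$ but by a rearrangement-type inequality in $r$: one checks that $\int_0^{R(\theta)}\zeta(r,\theta)\,dr=0$ and that $\zeta(\cdot,\theta)$ changes sign exactly once at $r=R(\theta)/3$ (with $(R-3r)\zeta\ge 0$), so convexity of $v$, via the monotonicity of $r\mapsto\partial\tilde U/\partial r$, gives $\zeta\,\partial_r\tilde U(r,\theta)\le \zeta\,\partial_r\tilde U(R(\theta)/3,\theta)$ for every $r$, and integrating in $r$ yields $\le 0$. Equality at $v=\bar u$ follows because $\partial_r\tilde U\equiv 2m(\theta)$ is $r$-constant, so the $r$-integral reduces to $2m(\theta)\int_0^R\zeta\,dr=0$---a fact about $\zeta$ having mean zero, not about the ODE making the $\theta$-integrand vanish. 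Your last paragraph candidly defers this step, but without the mean-zero/single-sign-change observation and the monotonicity comparison there is no route to the inequality $L_{\bar u}[v]\le 0$; this is the piece you would need to supply to turn the sketch into a proof.
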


{\noindent{\bf Sketch proof of Theorem \ref{T:verification_b}}:		Our duality result, Theorem \ref{thm:strong_duality}, asserts that if
	\begin{flalign}
		\int_{X} [x\cdot Du(x) - u(x) - D\bu(x) \cdot Du(x)] dx \le 0,   \quad \text{ for all } u \in \mathcal{U}
	\end{flalign}
	with equality holding at $u = \bu$,
	then $\bu$ is the unique (Lebesgue-a.e.) maximizer of \eqref{problem:monopolist_2}.
	
	In the proof of Theorem \ref{T:verification_b}, we first present that 
	$ \int_{X} [x\cdot Du(x) - u(x) - D\bu(x) \cdot Du(x)] dx  = \eqref{term:Omega{0}+Omega{1,0}_1} + \eqref{term:Omega{0}+Omega{1,0}_2} + \eqref{term:Omega{1,1}-1} + \eqref{term:Omega{1,1}-2}$. Then it can be shown that $\eqref{term:Omega{0}+Omega{1,0}_1} + \eqref{term:Omega{0}+Omega{1,0}_2} \le 0 $ with equality holding at  $u = \bu$, and $\eqref{term:Omega{1,1}-1} + \eqref{term:Omega{1,1}-2}\le 0 $ with equality at  $u = \bu$.\qed
}	
	
\subsection{Comparison of our solution to Rochet and Chon\'e's: an overlooked market segment}
\label{subsection:RC}
		
In the preceding sections, we have established a free boundary problem corresponding to the profit maximization problem and reduced the process of characterizing the maximizer to that of verifying the existence of an admissible (i.e. convex) Lipschitz solution to this free boundary problem.  Let us now compare our proposed solution to that of Rochet and Chon\'e.
		 
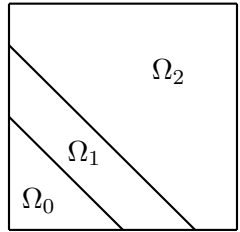
\begin{figure}[h]
	\begin{tikzpicture}[thick, scale=3]
		\draw[domain=0:1] plot (0, \x); 
		\draw[domain=0:1] plot (\x, 1); 
		\draw[domain=0:1] plot (1,\x); 
		\draw[domain=0:1] plot (\x, 0); 
		\draw[domain=0:0.8165] plot (\x,0.8165-\x); 
		\draw[domain=0:0.5] plot (\x,0.5-\x);
		
		\node at (0.13, 0.13) {$\Omega_0$};
		\node at (0.33, 0.34) {$\Omega_1$};
		\node at (0.7, 0.7) {$\Omega_2$};
	\end{tikzpicture}
	\caption{Partition of $X$ according to the rank of $D^2\bu$ given in \cite{RochetChone98}}
	\label{fig:1}
\end{figure}

As shown in Figure \ref{fig:1}, \cite{RochetChone98} claimed that the regions 
\eqref{Omega_0}--\eqref{Omega_2} where the Hessian has rank $i \in \{0,1,2\}$
are separated by two segments parallel to the anti-diagonal, so $\Omega_1 = \{(x_1,x_2) \in X : t_{0.5} <x_1 + x_2  \le  t_{1.5}\}$ 
with $t_{0.5} = \frac{4a+\sqrt{4a^2+6}}{3}$ and $t_{1.5} = 2a + \frac{\sqrt{6}}{3}$.  Thus, they do not consider the possibility of a non-empty subset $\Omega_1^\pm \subset \Omega_1$ where 
$\bu(x)$ does not just depend on $x_1+x_2$ (nor any system of equations
comparable to \eqref{D:m,b}--\eqref{offset E-L}).
Apart from that, their proposed solution is identical to ours, except that they fail to take into account that enforcing both the Dirichlet and Neumann conditions \eqref{eqn:solution_Omega_2}--\eqref{eqn:solution_Omega_3} on the line separating $\Omega_1$ from $\Omega_2$
overdetermines the Poisson problem.   We have shown their claims to lack self-consistency in \cite{McCannZhang23c}.

Motivated by \cite{RochetChone98},  different numerical approaches to variational problems with convexity constraints
have been proposed by a number of authors: 
\cite{CarlierLachand-RobertMaury01}, \cite{EkelandMoreno-Bromberg10}, \cite{Oberman13}, \cite{MerigotOudet14}, and \cite{CarlierDupuis17}. Our observation is supported by these numerics:
simulations carried out by \cite{Mirebeau16} in particular highlight that the boundary between the rank-1 and rank-2 regions of $D^2\bu$ (i.e., the boundary between $\Omega_{1}$ and $\Omega_{2}$ shown in the left picture of Figure \ref{fig:mirebeau}) is not a line segment. Moreover, in the same paper, Mirebeau also showed that the corresponding products, purchased by consumers on this boundary under the optimal pricing menu, form the non-zero curvature part of the red curve (as the boundary of the yellow/green region) in the right picture of Figure \ref{fig:mirebeau}. In the left picture of Figure \ref{fig:mirebeau}, the two ends of the boundary between $\Omega_{1}$ and $\Omega_{2}$ bend towards the anti-diagonal, providing more room for $D\bu$ to grow.   

\begin{figure}[h]
	\includegraphics{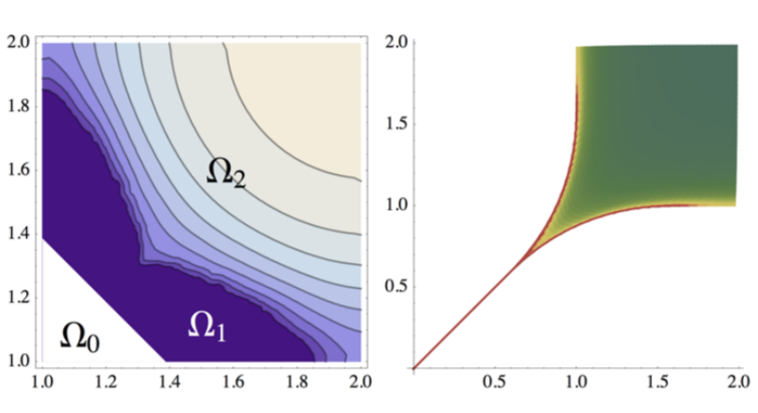}
	\caption{Numerics from \cite{Mirebeau16}. Left: level sets of $\det D^2\bu$ with $\bu = 0$ on $\Omega_{0}$ and $\det D^2 \bu = 0$ on $\Omega_{0} \cup \Omega_{1}$; Right: distribution of products sold by the monopolist.}
	\label{fig:mirebeau}
\end{figure}

\subsection{Economic interpretation and related phenomena}
\label{section:interpretation}

Let us now discuss a few aspects of our proposed solution.  Recall that the optimal indirect utility $\bar u$ is related to the optimal 
price menu $\bar v$ through the Legendre transform \eqref{Fenchel}.  More precisely, letting $u_+$ be the largest extension of $\bar u$ from $X=[0,1]^2$ to $\R^2$ which remains convex and coordinatewise non-decreasing, so that $\nabla u_+(\R^2) \subset Y=[0,\infty)^2$,
Theorem 4.6 of \cite{FigalliKimMcCann11} shows
\begin{equation}\label{price sandwich}
\bar v \ge u_+^* \quad {\rm on}\ Y = [0,\infty)^2, \qquad {\rm\ with\ equality\ on}\ \nabla \bar u(X),
\end{equation}
so $\bar v(y)=\bar u_+^*(y)$  for each product $y$ actually sold.

It is well-known that any failure of $u$ to be strictly convex at $x$ (in direction $p$) corresponds to a failure of $u^*$ to be differentiable at $y=\nabla u(x)$ (except in directions $p^\perp$) and vice versa; e.g. \cite{Rockafellar70}.
For example,  the differentiability of $u_+$ which follows from \eqref{regularity} implies that $\bar v$ coincides with
the restriction to $\nabla \bar u(X)$ of the strictly convex function $u_+^*$.  More significantly,  for any bunch $(\nabla \bar u)^{-1}(y)$
consisting of more than one point, differentiability of $u_+^*$ and hence $\bar v$ fails at $y$.  Thus,  on the red part  $\nabla \bar u(\Omega_1^0)$ of the 
diagonal depicted in Figure \ref{fig:mirebeau} (corresponding to the lower bunching region),  $\bar v$ is differentiable only in the diagonal (and not the transversal) directions.  Similarly, along the upper bunching regions $\nabla \bar u(\Omega_1^\pm)$, 
$\bar v$ cannot be extended differentiably across the boundary of $\nabla \bar u(X)$; i.e. $\bar v$ may be tangentially but not transversally differentiable along the corresponding red curves in Figure~\ref{fig:mirebeau}.  In economic terms,  if one tries to extend $\bar v$ differentiably across either of the red curves bounding $\nabla u(X)$ at $y$,  options which the monopolist does not wish to produce would be priced attractively enough to be chosen by some of the types in the bunch
$\nabla u^{-1}(y)$ and their neighbours,  an adverse selection which spoils maximality of the monopolist's profits. Alternatively: the price singularity caused by failure of the inner and outer normal derivatives of $\bar v$ to agree at those boundary points $y$ of $\nabla \bar u(X)$ where bunching occurs, leads a positive fraction ${\rm Area}[\Omega_1^\pm]$ of agents to select products on each of the red curves. 
As in \cite{ChiapporiMcCannPass17}, we expect it is possible to derive a differential equation reflecting the fact that the market must clear, by relating the local discrepancy between the inner and outer normal derivatives of  $\bar v$ (or more precisely, of $u^*_+\le \bar v$)  at such points $y$ to the one-dimensional density of products which the monopolist should produce along the red curves at $y$.

Let us also remark that in a recent investment-to-match taxation model proposed by \cite{BoermaTsyvinskiZimin22+}, simultaneously and independently of the present work,  a similar phenomenon has been numerically observed and discussed:  in their terminology and transformed coordinates, $\Omega_1$ decomposes into
a blunt bunching region $\Omega_1^0$ in which the optimal product line does not differentiate between buyers according
the sign $x_1-x_2$ distinguishing their dominant trait, as opposed to the targeted bunching regions $\Omega_1^\pm$ in which 
the optimal product line sorts along the dimension of their dominant trait and bunches in the other dimension.
In our case, the two regions can also be distinguished by the fact that the indirect utility $\bar u(x)$ is constant on each bunch in the 
blunt bunching region $\Omega_1^0$,  whereas it varies along generic bunches
in the targeted bunching region {$\Omega_1^\pm$} since \eqref{slope E-L} ensures the slope $m$ of $\bar u$ along the  
segment $(\nabla \bar u)^{-1}(y)$ cannot generally vanish. 

\subsection{Necessity: a conditional argument that the optimizer satisfies our free boundary problem}
 \label{section:sketch_optimal_solution}
  The new form of free-boundary problem whose solution we have just shown to optimize the \cite{RochetChone98} model on the square may appear mysterious.  We now motivate it by deriving the equations to be satisfied by the profit-maximizing payoff $\bu$ using perturbation arguments from the calculus of variations.  Outside $\Omega_1^\pm$ this reaffirms what was found by Rochet and Chon\'e; inside $\Omega_1^\pm$ it is new.
  However, this derivation depends 
 (a) on $\bu$ satisfying the ansatz that $X$ decomposes into three regions $\Omega_i\subset \cl[\Int \Omega_i]$ with connected interiors ordered ordered along the diagonal \eqref{Omega_0}--\eqref{Omega_2} according to the rank $i$ of $D^2\bu$; (b) that $\Omega_1$ is further subdivided  \eqref{Omega_1^0}--\eqref{Omega_1^pm},  with $\bar u$ being a function of $x_1+x_2$ on segments which foliate $\Omega_1^0$, while being affine along
 segments which start at $\p X$ and end on $\p \Omega_2$ whose slope $\tan\theta$ varies monotonically
 (and boundary intercept $h(\theta)$ increases locally uniformly) in $\Omega_1^-$; and (c) that $\Delta \bar u$  is bounded away from zero on each compact subset of $\Int(\Omega_1)$, while
  both $\Delta \bar u$ and $\det D^2 \bar u$ are bounded away from zero on each compact subset of $\Omega_2$.  These hypotheses are
 consistent with all prior theoretical and numerical results concerning the problem that we know of.

For each fixed $y\in Y$, the isochoice set
\begin{align*}
\Omega(y)&: = \{x\in X: \bu(x) = x \cdot y - \bu^*(y)\}  
\\&= \{x\in X: \bu(x) \le x \cdot y - \bu^*(y)\}  
\\&= X \cap \p \bu^*(y) 
\end{align*}
 is convex, being a level set of the convex function $x\in X \mapsto \bu(x) - x \cdot y$. On 
 $\Omega_{1}:= \Omega_{1}^{0} \cup \Omega_{1}^{-} \cup \Omega_{1}^{+}$, Lemma \ref{L:ruled surfaces} guarantees
 these isochoice sets consist of line segments whose endpoints lie on $\p\Omega_1$;
 here $\Omega_{1}^{0}$ corresponds to the region where all isochoice sets are parallel to the anti-diagonal. 
 On $\Omega_{2}$, each isochoice set is a $0$-dimensional convex set and thus a point. See Figure \ref{fig:2} for the regions in $X$ and their boundaries.

\medskip
\subsubsection{Details on $\Omega_{0}$ and $\Omega_{1}^{0}$}\label{subsubsection:Omega0_and_1,0}
In the sequel, we first have a close look at the behaviour of $\bu$ on $\Omega_0$ and $\Omega_{1}^{0}$.
The next lemma shows either the exclusion region $\Omega_0$ or its complement $X\setminus \Omega_0$ is an isosceles triangle.  See the figures below for these two possibilities. 

\begin{figure}[h]
	\centering
\begin{subfigure}{.45\textwidth}
	\centering
	\begin{tikzpicture}[thick, scale=4]
		\draw[domain=0:1] plot (0, \x) ;
		\draw[domain=0:1] plot (\x, 1) ;
		\draw[domain=0:1] plot (1,\x) ;
		\draw[domain=0:1] plot (\x, 0) ;
		\draw[domain=0:0.5] plot (\x, 0.5-\x);

		\node at (0.15, 0.15) {$\Omega_0$};
		\node at (-0.12, 0.5) {\tiny $\left(a,\underline x_2\right)$};
	\end{tikzpicture}
	\caption{ Case (i)}
	\label{fig:2.1a}
	\end{subfigure}%
\hfill
\begin{subfigure}{.45\textwidth}
	\centering
	\begin{tikzpicture}[thick, scale=4]
		\draw[domain=0:1] plot (0, \x) ;
		\draw[domain=0:1] plot (\x, 1) ;
		\draw[domain=0:1] plot (1,\x) ;
		\draw[domain=0:1] plot (\x, 0) ;
		\draw[domain=0.2:1] plot (\x, 1.2-\x);
		
		\node at (0.35, 0.35) {$\Omega_0$};
		\node at (0.2, 1.05) {\tiny $\left(\underline x_1, a+1\right)$};
	\end{tikzpicture}
	\caption{ Case (ii)}
	\label{fig:2.1b}
\end{subfigure}
	
	\caption{Shape and position of $\Omega_{0}$: two possibilities}
	\label{fig:2.1}
\end{figure}
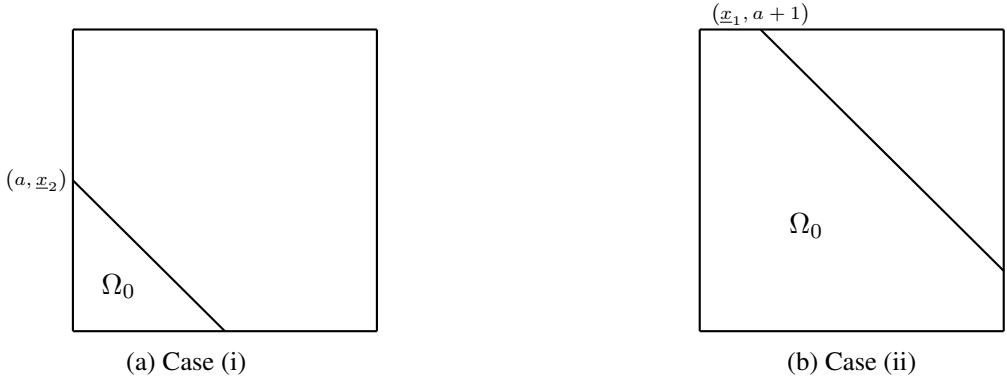

\begin{lemma}[Shape of exclusion region]\label{lemma:formula_Omega_0_shape}
	Under ansatz (a)--(c),
	$\Omega_{0}$ surrounds the lower left corner $(a,a)$ of the square $X=[a,a+1]^2$,  and either $\Omega_0$ or its complement is an isosceles triangle. 
	Moreover, $\bu \equiv 0$ on $\Omega_{0}$, and $\Omega_1^0\ne \emptyset$ in \eqref{Omega_1^0}.
\end{lemma}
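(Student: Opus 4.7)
The plan is to combine the reflection symmetry $\sigma:(x_1,x_2)\mapsto(x_2,x_1)$ of the data with the affine behaviour of $\bar u$ on the rank-zero region and the foliation of $\Omega_1^0$ by anti-diagonal leaves. Since $X=[a,a+1]^2$, $f\equiv 1$, $c(y)=\tfrac12|y|^2$ and $y_\emptyset=0$ are all $\sigma$-invariant, the uniqueness of the optimizer from Corollary~\ref{C:unique} yields $\bar u\circ\sigma=\bar u$; hence $\Omega_0$, $\Omega_1^0$ and $\Omega_1^\pm$ are $\sigma$-invariant. The identity $\Phi[u-c]=\Phi[u]+c\,{\rm Area}(X)$ combined with $u-c\in\cU$ for $c\in[0,\min_X u]$ forces $\min_X\bar u=0$, so the minimum set $M:=\{\bar u=0\}$ is nonempty; the constraint $D\bar u(X)\subset[0,\infty)^2$ makes $\bar u$ coordinatewise nondecreasing, placing its minimum at the lower-left corner, so $(a,a)\in M$.

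To see $\bar u\equiv 0$ on $\Omega_0$, note that the three-region partition of ansatz (a) forces (in measure) $\Omega_0=\{D^2\bar u=0\}$; hence $\bar u$ is affine on the connected set $\Int\Omega_0$, and $\sigma$-symmetry writes it as $\bar u(x)=p(x_1+x_2)+q$ with $p\ge 0$ and $2pa+q=0$ (since $(a,a)\in M$). If $p>0$, one can construct an admissible convex competitor $\bar u_\delta\in\cU$ by lowering the affine piece on $\Omega_0$ by $\delta$ (with a compensating convex modification near $\partial\Omega_0\cap\partial\Omega_1^0$), yielding $\Phi[\bar u_\delta]=\Phi[\bar u]+\delta\,{\rm Area}(\Omega_0)+o(\delta)$ and contradicting optimality; so $p=q=0$.

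Next, on $\Omega_1^0$ ansatz (b) writes $\bar u(x)=g(x_1+x_2)$ for some convex $g$, and continuity with the previous paragraph gives $g(2a)=0$. The interface $\partial\Omega_0\cap\Int X$ then lies on the single anti-diagonal line $\{x_1+x_2=t_0\}$ with $t_0:=\sup\{t:g(t)=0\}$. Since $\Omega_0$ is convex, $\sigma$-invariant, and contains $(a,a)$, it equals $X\cap\{x_1+x_2\le t_0\}$: when $t_0\le 2a+1$ this is an isosceles right triangle of leg $t_0-2a$, and otherwise $X\setminus\Omega_0$ is the isosceles right triangle of leg $2a+2-t_0$ in the upper-right corner.

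Finally, $\Omega_1^0\ne\emptyset$ because any bunch in $\Omega_1$ that crosses the diagonal $\{x_1=x_2\}$ must, by $\sigma$-invariance of $\bar u$ and uniqueness of the product chosen at each type, correspond to a diagonal product and thus be an anti-diagonal segment; continuity then propagates such bunches through an open $\sigma$-symmetric neighborhood of the diagonal just above $\Omega_0$. The main obstacle is the perturbation argument in the second paragraph: one must exhibit a convex admissible $\bar u_\delta$ whose affine part on $\Omega_0$ is strictly below $\bar u$'s while preserving both $D\bar u_\delta\in[0,\infty)^2$ and the continuous matching at $\partial\Omega_0\cap\partial\Omega_1^0$, and must verify that the leading-order change in $\Phi$ is indeed $\delta\,{\rm Area}(\Omega_0)$; everything else then reduces to planar geometry combined with ansatz (a)--(b).
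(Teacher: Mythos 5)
Your overall blueprint (symmetry, affine behaviour on $\Int\Omega_0$, and reducing the triangle claim to the structure of $\Omega_1^0$) matches the paper's, but there are two real gaps.

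First, the step you flag yourself---showing the affine slope $p$ vanishes on $\Omega_0$---is where the paper uses a much simpler competitor than the one you sketch, and you never exhibit yours. The paper takes $\tilde u := \max\{\bar u - \epsilon, 0\}$, which is automatically convex, nonnegative, and has gradients in $\{0\} \cup D\bar u(X) \subset [0,\infty)^2$, so $\tilde u \in \cU$ with no ``compensating convex modification'' needed. If $p>0$, the sublevel set $\{\bar u < \epsilon\}$ has area $o(1)$ as $\epsilon \to 0$ (since $\{\bar u = 0\}$ would otherwise have interior and $D\bar u = 0$ there, contradicting $D\bar u = (p,p)$), so
\[
\Phi[\tilde u] - \Phi[\bar u] \;=\; \epsilon\,\mathrm{Area}\{\bar u \ge \epsilon\} + O\bigl(\mathrm{Area}\{\bar u<\epsilon\}\bigr) \;>\; 0
\]
for $\epsilon$ small, a contradiction. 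Your proposal instead modifies $\bar u$ only on $\Omega_0$ and waves at a boundary patch; that competitor is neither globally convex nor obviously in $\cU$, and verifying the $o(\delta)$ estimate is precisely what you'd have to do. You should adopt the truncation argument.

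Second, your last paragraph asserting $\Omega_1^0 \ne \emptyset$ and identifying $\partial\Omega_0 \cap \Int X$ with the full antidiagonal chord is not an argument. You say bunches crossing the diagonal ``must be anti-diagonal segments,'' which is true by $\sigma$-symmetry and the ruled-surface lemma (a diagonal bunch would force $D\bar u$ constant along a diagonal segment touching $\Omega_0$, placing it in $\Omega_0$, contradiction), but you then claim ``continuity propagates'' such bunches through an open neighbourhood without showing why their endpoints land on $\partial X$ rather than on $\partial\Omega_2$. The paper does this carefully: pick $x'=(d,d) \in \partial\Omega_0 \setminus \{(a,a)\}$ on the diagonal; by ansatz (a) it lies in $\partial\Omega_1$ and is approximated by diagonal points $x_k \in \Int\Omega_1$; by ansatz (b) each $x_k$ sits on an antidiagonal bunch with $\bar u(x_k) \ne 0$; for large $k$ the endpoints can lie neither in $\Omega_0$ (where $\bar u = 0$) nor in $\cl\Omega_2$ (disjoint from $\cl\Omega_0$ by \eqref{Omega_0}--\eqref{Omega_2}), hence lie on $\partial X$, giving $\Omega_1^0 \ne \emptyset$ and, in the limit $k\to\infty$, an antidiagonal chord of $X$ on $\partial\Omega_0$. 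Only after this chord is identified does convexity and $\sigma$-symmetry force $\Omega_0 = X \cap \{x_1 + x_2 \le t_0\}$, hence the triangle dichotomy. Your derivation of that chord from ``$g(x_1+x_2)=0$ on $[2a,t_0]$'' implicitly assumes $\Omega_1^0$ already covers the entire interface $\partial\Omega_0 \cap \Int X$, which is essentially what needed to be proved.

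Your observation that $(a,a) \in \{\bar u = 0\}$ via coordinatewise monotonicity is a fine alternative to reading $(a,a)\in\Omega_0$ directly from \eqref{Omega_0}, and the $\sigma$-invariance argument via Corollary~\ref{C:unique} matches the paper.
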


\begin{proof}
The known regularity \eqref{regularity} is sufficient to ensure
$\bar u$ is affine on the (connected) interior of $\Omega_0\subset \cl[\Int \Omega_0]$, since its Hessian vanishes there.
 By symmetry,  $Du(\Omega_0)=(const,const)$;
 unless this gradient is zero,  $\max\{\bar u-\epsilon,0\}$ will generate larger profits than $\bar u$,
 hence $\bar u=0$ on $\Omega_0$ and the latter is a convex set which is reflection symmetric around the diagonal
 (by the uniqueness of optimizer asserted e.g. in {Corollary \ref{Cor:complementary_slackness}}).  
 Recall \eqref{Omega_0} implies $(a,a) \in \Omega_0$.
 Choose a point $x'=(d,d) \in \p\Omega_0 \setminus (a,a)$.
 Our ansatz (a) 
implies $x' \in \p \Omega_1$, hence is approximated by points $x_k$ on the diagonal in the interior of $\Omega_1$.
Our ansatz (b) implies
each point $x_k$ belongs to a segment in $\Omega_1$ symmetric around the diagonal with endpoints on $\p \Omega_1$ on which $\bar u$ is a non-zero constant.  For sufficiently large $k$,
the endpoints of these segments cannot lie on $\cl[\Omega_2]$ (which is disjoint from $\cl[\Omega_0]$ by hypotheses
\eqref{Omega_0}--\eqref{Omega_2}).  Nor can they lie on $\Omega_0$, since $\bu(x_k) \ne 0$.  Thus the endpoints must lie on $\p X$, hence $\Omega_1^0 \ne \emptyset$. The limit of these segments is a boundary segment of $\Omega_0$ parallel to the antidiagonal and starting and ending on $\p X$. 
\end{proof}

 From now on, we will continue the following analysis based on Case (i). The same characterization argument works equally well in Case (ii).  Lemma \ref{lemma:formula_b_Omega_0} will specify $\underline{x}_2$ and {eliminate the second case}. 

Since $\Omega_{1}^{0}$ represents the region where all equivalence classes are parallel to the anti-diagonal, by symmetry, we know $D\bu(\Omega_{1}^{0})$ is a subset of the diagonal $y_1 = y_2$ in the space $Y$ of products. Denote by $\vec{n}_1$ the unit direction parallel to the anti-diagonal of $X$. Then $\p_{\vec{n}_1} \bu = 0$ on $\Omega_{1}^{0}$. A perturbation argument on the function class where the directional derivatives along $\vec{n}_1$ vanish now yields the following lemma.

\begin{lemma}[Lower bunching region]\label{lemma:formula_Omega_1^0}
	 
	On $\Omega_{1}^{0}$, the Euler-Lagrange equation for \eqref{admissable}--\eqref{problem:monopolist_2} under ansatz (a)-(c), implies
	\begin{flalign*}
		\bu(x_1, x_2) = \frac{3}{8}(x_1+x_2)^2 -\frac{a}{2}(x_1+x_2) -  C_1 \ln(x_1 + x_2 -2a) + C_0,
	\end{flalign*}
	where $C_0 = -\frac{3}{8}(\underline x_2 + a)^2 + \frac{a}{2}(\underline x_2 + a) + C_1 \ln(\underline x_2 -a)$ and $C_1 = \frac{3}{4}(\underline x_2)^2 - \frac{1}{2}a \underline x_2 -\frac{1}{4}a^2$
	 are constants.
\end{lemma}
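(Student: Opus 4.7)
The plan is to use the ansatz that $\bu$ depends only on $t := x_1 + x_2$ on $\Omega_1^0$ (which is what singles out this region among the three components of $\Omega_1$) to reduce the two-dimensional variational problem \eqref{problem:monopolist_2} to a one-dimensional calculus-of-variations problem, then to solve the resulting Euler-Lagrange ODE explicitly using boundary data imported from $\Omega_0$. Writing $\bu(x) = U(t)$ yields $D\bu = (U'(t), U'(t))$, $|D\bu|^2 = 2U'(t)^2$, and hence
\begin{equation*}
x \cdot D\bu - \bu - \tfrac{1}{2}|D\bu|^2 = tU'(t) - U(t) - U'(t)^2.
\end{equation*}
Changing variables to $(t, s) := (x_1+x_2, x_1 - x_2)$ (so that $dx_1\,dx_2 = \tfrac12 dt\, ds$) and using the anti-diagonal cross-section of Case (i) of Lemma \ref{lemma:formula_Omega_0_shape}, the level set $\{x_1 + x_2 = t\} \cap X$ for $t \in [a + \underline x_2, a + \tilde x_2] \subset [2a, 2a+1]$ is parameterized by $s \in [-(t-2a), t-2a]$. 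Integrating out $s$ turns the $\Omega_1^0$-contribution to $\Phi[\bu]$ into the scalar functional
\begin{equation*}
J[U] := \int_{a+\underline x_2}^{a+\tilde x_2} \bigl[tU'(t) - U(t) - U'(t)^2\bigr](t-2a)\, dt.
\end{equation*}

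Next, I would take inner variations $U + \varepsilon \eta$ with $\eta \in C_c^\infty((a+\underline x_2, a+\tilde x_2))$ and lift them to $X$ via $\tilde \eta(x) := \eta(x_1+x_2)$: ansatz (b)--(c) ensures that these perturbations preserve both the anti-diagonal foliation of $\Omega_1^0$ and, for $|\varepsilon|$ small, the convexity of $\bu$ and the gradient constraint $D\bu(X) \subset [0,\infty)^2$. Setting $\frac{d}{d\varepsilon}\big|_{\varepsilon=0} J[U + \varepsilon \eta] = 0$ and integrating by parts yields the Euler-Lagrange equation
\begin{equation*}
\frac{d}{dt}\bigl[(t-2a)\,U'(t)\bigr] = \frac{3t - 4a}{2},
\end{equation*}
which integrates once to $(t-2a)U'(t) = \tfrac{3t^2}{4} - 2at - C_1$, i.e.\
\begin{equation*}
U'(t) = \tfrac{3t}{4} - \tfrac{a}{2} - \frac{C_1}{t-2a},
\end{equation*}
and once more to the stated form $U(t) = \tfrac{3}{8}t^2 - \tfrac{a}{2}t - C_1 \ln(t-2a) + C_0$.

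Finally, I would determine $C_0$ and $C_1$ by the $C^1$-matching at the interface $\p\Omega_0 \cap \p\Omega_1^0 = \{t = a + \underline x_2\}$: since $\bu \equiv 0$ on $\Omega_0$ by Lemma \ref{lemma:formula_Omega_0_shape} and $\bu \in C^1(X)$ by \eqref{regularity}, both $U(a+\underline x_2) = 0$ and $U'(a+\underline x_2) = 0$. The vanishing-derivative condition gives $C_1 = \tfrac{3}{4}\underline x_2^2 - \tfrac{a}{2}\underline x_2 - \tfrac{a^2}{4}$ by a short algebraic manipulation, and the vanishing-value condition then yields the claimed formula for $C_0$. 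The main obstacle is the rigorous justification of the inner-variation step: one must be sure that the lifted perturbations $\tilde \eta$ really keep $\bu + \varepsilon \tilde\eta$ in the admissible class $\mathcal U$, and that no two-dimensional variations outside the foliation ansatz contribute a stationarity condition not already captured by the 1D Euler-Lagrange equation, which follows because ansatz (b) declares the foliation structure of $\Omega_1^0$ to be fixed at the optimum and ansatz (c) provides the strict convexity needed to absorb smooth small perturbations.
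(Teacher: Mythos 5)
Your proposal is correct and takes essentially the same approach as the paper: reduce to a one-dimensional variational problem in $t = x_1 + x_2$ by exploiting the ansatz that $\bar u$ depends only on $t$ in $\Omega_1^0$, perturb by functions $\eta(x_1+x_2)$ to derive the Euler--Lagrange ODE (yours, $\frac{d}{dt}[(t-2a)U'] = \frac{3t-4a}{2}$, is an algebraic rearrangement of the paper's $(2-2g'')(t-2a) + t - 2g' = 0$), and fix the two constants of integration by the $C^1$ matching $U(a+\underline x_2)=U'(a+\underline x_2)=0$ coming from $\bar u\equiv 0$ on $\Omega_0$. One small slip: from $(t-2a)U'(t)=\tfrac34 t^2 - 2at - C_1$ polynomial division gives $U'(t)=\tfrac34 t-\tfrac a2 - \frac{C_1+a^2}{t-2a}$, so you silently relabel the integration constant ($C_1\mapsto C_1+a^2$) between those two displays; this is harmless because $C_1$ is only determined afterward by the boundary condition, and your final values of $C_0,C_1$ agree with the lemma.
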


\begin{proof}[Proof of Lemma \ref{lemma:formula_Omega_1^0}]
	Since $\p_{\vec{n}_1} \bu = 0$ on $\Omega_{1}^{0}$, denote $\bu(x_1, x_2) := g(x_1+x_2)$.
	The hypothesized positive lower bound (c) for $g''$ ensures $g$ can be perturbed within $\mathcal U$ 
	by any smooth
	function $h(x_1 + x_2)$ on $\Omega_1^0$ 
	vanishing in a neighbourhood of $\p\Omega_{1}^{0} \cap \Int(X)$.  This perturbation yields
	\begin{flalign}\label{ODE_first_part}
			\left(2-2g''(t)\right) \left(t-2a\right) + t -2 g'(t) =0, \ \ \ \text{ on } \underline x_2 +a< t< \tilde x_2+a,
	\end{flalign}
	in the distributional sense, hence $g \in C^{1,1}_{loc}$ on this interval.
With boundary conditions $g(\underline x_2 + a) =0= g'(\underline x_2 +a)$, it is not hard to find the explicit 
formula of $g$ by solving the above ordinary differential equation (ODE).
\end{proof}

From the explicit solution above, one can see that the optimal solution is constant along the isochoice segments in $\Omega_{1}^{0}$ and that the $C^{1,1}_{loc}(\Int X)$ 
regularity 
provided by
\cite{CaffarelliLions06+} 
cannot be improved to $C^2_{loc}(\Int X)$ in any neighbourhood of the segment $\p \Omega_0 \cap \p \Omega_1$.

\medskip

\subsubsection{A verification of the Euler-Lagrange equation on $\Omega_{2}$.}\label{subsection:Omega_2}

\begin{lemma}[Customization region for top types]
\label{lemma:ODE:Omega_2}
	If $\bu \in \mathcal U$ optimizes the monopolist's profits \eqref{problem:monopolist_2} under ansatz (a)-(c), then it satisfies 
	\begin{flalign}\label{Poisson again2}
		\begin{cases}
			\Delta \bu = 3, & \text{ on } \Int(\Omega_2);\\
			(D\bu(x)-x)\cdot \vec{n}(x) = 0, & \text{ on } 
			\p X \cap \p \Omega_2.
		\end{cases}
	\end{flalign}
\end{lemma}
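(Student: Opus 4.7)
The plan is to derive both conditions in \eqref{Poisson again2} as the Euler-Lagrange / Karush-Kuhn-Tucker equations associated with the unconstrained region $\Omega_2$. Since the convexity constraint, the participation constraint $u\ge 0$, and the gradient constraint $Du(X)\subset Y=[0,\infty)^2$ are all non-binding on compact subsets of $\Int \Omega_2$ (the first by ansatz (c), the second since $\bar u>0$ on $\Omega_2$, and the third since $D\bar u$ lies in the interior of $Y$ on the customization region), two-sided perturbations of $\bar u$ within $\mathcal{U}$ are available in this region. Accordingly, I will compute the first variation $\frac{d}{d\varepsilon}\big|_{\varepsilon=0}\Phi[\bar u+\varepsilon\varphi]$, first for $\varphi\in C_c^\infty(\Int\Omega_2)$ to obtain the PDE, then for $\varphi\in C^\infty(X)$ supported in a neighborhood of $\partial X\cap \partial\Omega_2$ disjoint from $\cl[\Omega_1]$ to obtain the Neumann condition.

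The admissibility step is where the ansatz is used essentially. By (c), on any compact $K\Subset \Omega_2$ the Hessian $D^2\bar u$ has eigenvalues bounded below by some $\lambda_K>0$, so for any $\varphi\in C_c^2(K)$ and all $|\varepsilon|<\lambda_K/\|D^2\varphi\|_\infty$ the function $\bar u+\varepsilon\varphi$ remains convex on $X$ (affine where $\bar u$ was affine, and strictly convex on $K$). On the same set $\bar u$ is bounded below by a positive constant and $D\bar u$ lies in a compact subset of $\Int Y$, both of which are preserved by small perturbations. A similar argument applies in a one-sided neighborhood $U$ of $\partial X\cap \partial\Omega_2$ with $U\cap \cl[\Omega_1]=\emptyset$, giving admissibility for $\varphi\in C^\infty(\bar U)$ with $\text{supp}(\varphi)\subset U$.

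The first variation computation is routine: from $\Phi[u]=\int_X(x\cdot Du-u-\tfrac12|Du|^2)dx$,
\begin{equation*}
\delta\Phi[\varphi]=\int_X\bigl(x\cdot D\varphi-\varphi-D\bar u\cdot D\varphi\bigr)dx,
\end{equation*}
and integration by parts using $\text{div}\,x=2$ on $X\subset\R^2$ yields
\begin{equation*}
\delta\Phi[\varphi]=\int_X(\Delta\bar u-3)\varphi\,dx+\int_{\partial X}(x-D\bar u)\cdot\vec n\,\varphi\,dS.
\end{equation*}
Applying this first to test functions $\varphi\in C_c^\infty(\Int\Omega_2)$ (for which the boundary term vanishes) forces $\Delta\bar u=3$ on $\Int\Omega_2$ by the fundamental lemma of the calculus of variations. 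Applying it next to $\varphi$ supported near $\partial X\cap\partial\Omega_2$ (and using the PDE just established to kill the interior integrand where $\varphi$ is nonzero) leaves $\int_{\partial X\cap\partial\Omega_2}(x-D\bar u)\cdot\vec n\,\varphi\,dS=0$ for all admissible boundary traces, which gives $(D\bar u-x)\cdot\vec n=0$ on $\partial X\cap\partial\Omega_2$.

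The main obstacle is justifying admissibility of the perturbation $\bar u+\varepsilon\varphi\in\mathcal{U}$ rigorously; this is exactly what ansatz (c), together with the regularity \eqref{regularity} and the geometric separation of $\Omega_2$ from the other two regions, is designed to supply. Once admissibility is in hand, both conclusions follow by standard variational arguments with no further subtlety.
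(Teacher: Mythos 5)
Your proposal is correct and follows essentially the same variational argument as the paper: perturb $\bar u$ by admissible test functions whose admissibility is guaranteed by ansatz (c), compute the first variation, and integrate by parts. The only cosmetic difference is that you split the test-function class into an interior step and a boundary step, whereas the paper handles both in one computation with test functions supported on compact subsets of the (closed) region $\Omega_2$; the content is the same.
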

\begin{proof}[Proof of Lemma \ref{lemma:ODE:Omega_2}]
	Let $w$ be any smooth function supported on a compact subset of $\Omega_2$. 
	The hypothesized lower bound (c) for $D^2 \bu$ in the support of $w$ ensures $\bu+\varepsilon w \in \mathcal{U}$ for $|\varepsilon| \ll 1$. Since $\bu$ is optimal, $0 = \lim\limits_{\varepsilon \rightarrow 0} \frac{\Phi[\bu + \varepsilon w] - \Phi [\bu]}{\varepsilon}$ implies
	\begin{flalign*}
		0 = & \int_{\Omega_{2}} [(x-D\bu(x))\cdot Dw(x) -w(x)] dx\\
		= & \int_{\Omega_{2}} (\Delta \bu(x) - 3) w(x) dx + \int_{\partial \Omega_{2}} \langle x-D\bu(x), \vec{n}(x) \rangle w(x) dS(x).
	\end{flalign*}
Since $\bar u$ is convex,  its continuous differentiability \eqref{regularity} to the boundary provides sufficient regularity to justify this computation.	
	Noting that $\Omega_1 \subset \cl[\Int \Omega_1]$ and \eqref{Omega_0}--\eqref{Omega_2} give 
	$\# (\cl[\p X \cap \Omega_2]\setminus \Omega_2) = 2$,  the arbitrariness of $w$ on compact subsets of $\Omega_2$ yields the Poisson Neumann problem \eqref{Poisson again2} in both the distributional and (noting \eqref{regularity}) the pointwise a.e. senses.
\end{proof}

\medskip

\subsubsection{The ODE implied by the Euler-Lagrange equations on $\Omega_{1}^{-}$.}\label{section:polar_coordinate2}

	Recall by symmetry and uniqueness that the solution $\bu$ is symmetric across the diagonal. Without loss of generality, we may therefore focus on $\Omega_{1}^{-}$ rather than $\Omega_1^+$. Our ansatz (b) assumes $\Omega_{1}^{-}$ is foliated by line segments $\bx(\cdot, \theta)$ along which the optimizing payoff $\bu$ is affine \eqref{ruled surface}.		
	Taking $\Omega_{1}^{\pm}$ to be held fixed for the moment, we need to consider two types of functions for perturbations around this optimizer. 
	\begin{enumerate}
		\item[1.] Outer perturbations $\bu + \varepsilon w$ where $w$ is affine along the same segments \eqref{leaf parameterization}, 
		i.e.
		\begin{equation}\label{affine perturbation}
		\tilde w(r,\theta) := w(\bar x(r,\theta)) =: w_0(\theta) + r w_1(\theta).
		\end{equation}
		\item[2.] Inner perturbations: functions $\bu^{\varepsilon}$ that are affine on the segments of perturbed foliations with coordinates $\vbx(r, \theta):= (a, h(\theta) + \varepsilon\tzt(\theta)) + r (\cos\theta, \sin \theta)$.
	\end{enumerate}
	
	\begin{proposition}[Outer perturbations on upper bunching region]
	\label{P:ODE:Omega{1,1}_1}
		Let $\bu$ be the maximizer of \eqref{problem:monopolist_2}. Under ansatz (a)-(c), $\Omega_{1}^{-}$ is foliated by line segments along which $\bu$ is affine. If each fixed leaf of the foliation is parameterized by  
		$\{\bx(r, \theta) = (a, h(\theta)) + r(\cos\theta, \sin\theta): 
		 r \in [0, R(\theta)] \}$, 
		while the different leaves are parameterized by $\theta \in (-\frac{\pi}{4}, \bar{\theta}]$,
				then each segment $\Omega_{1}^{-}(\theta): = \{\bx(r, \theta): r \in [0, R(\theta)]\}$ corresponds to a length $R(\theta)$ bunch of agents who prefer the same product.
		Moreover, $\bu(\bx(r,\theta))=m(\theta)r + b(\theta)$ satisfies 	
		\begin{flalign}\label{ODE:Omega{1,1}_1}
			0= \beta(\theta) +\left(2h'(\theta) \sin\theta -  h''(\theta)\cos\theta\right)  \int_{\theta}^{\bar{\theta}} \alpha(\vartheta)   d\vartheta + \alpha(\theta) h'(\theta) \cos \theta   \hspace{1cm} \text{ on } \left(-\frac{\pi}{4}, \bar{\theta} \right),
		\end{flalign}
	where  $\alpha(\theta) : = \left( {m}(\theta) + {m}''(\theta) -3 h'(\theta)\cos\theta \right) R(\theta)  -  \frac{3}{2}  R^2(\theta) + \left(m(\theta)\cos\theta -m'(\theta)\sin \theta  - a\right)  h'(\theta)$ and
	$\beta (\theta) : =\frac{1}{2}\left( {m}(\theta) + {m}''(\theta)  -3 h'(\theta)\cos\theta \right)  R^2(\theta) - R^3(\theta)$.
	\end{proposition}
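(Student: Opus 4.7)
The plan is to compute the first variation of $\Phi$ against outer perturbations $w$ which are affine along the fixed leaves, observe that preservation of convexity forces the infinitesimal form of \eqref{sys3_2}, and thereby reduce the variation to an integral against a single free profile $w_1$, whose arbitrariness then delivers \eqref{ODE:Omega{1,1}_1}.

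First, writing $L(u)=x\cdot Du - u -\tfrac{1}{2}|Du|^2$, an integration by parts gives
\[
\delta\Phi\big|_{\Omega_1^-} = \int_{\Omega_1^-}(\Delta \bar u - 3)\,w\,dx + \int_{\partial\Omega_1^-}(x - D\bar u)\cdot \vec n\, w\, dS.
\]
The $C^1$ regularity \eqref{regularity} of $\bar u$, Lemmas \ref{lemma:formula_Omega_1^0}--\ref{lemma:ODE:Omega_2} (supplying the Euler-Lagrange/Neumann conditions already in force on $\Omega_1^0$ and $\Omega_2$), and the $x_1\leftrightarrow x_2$ symmetry cancel every contribution from $\partial\Omega_1^-\setminus \partial X$ against the corresponding variation of $\Phi$ coming from the neighbouring region, leaving only the left-edge piece $\{a\}\times[\tilde x_2,\bar x_2]\subset \partial X$. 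Passing to leaf coordinates \eqref{leaf parameterization}, the Hessian formula \eqref{Hessian} produces the providential simplification $(\Delta\bar u - 3)(r + h'\cos\theta) = (m + m'' - 3h'\cos\theta) - 3r$; combined with \eqref{area element} and $w=w_0 + r w_1$, the $r$-integral on $[0,R(\theta)]$ yields a volume contribution $\int [w_0\alpha_0 + w_1\beta]\,d\theta$ with $\alpha_0(\theta) := (m+m''-3h'\cos\theta)R - \tfrac 32 R^2$. On the left edge $\vec n=(-1,0)$, $r=0$, $dS=h'(\theta)\,d\theta$, and $\frac{\partial \bar u}{\partial x_1}(a,h(\theta))=m\cos\theta - m'\sin\theta$ by \eqref{syss1}, so the boundary integral contributes $\int w_0(m\cos\theta - m'\sin\theta - a)h'\,d\theta$, which combines with $\alpha_0$ to yield precisely $\alpha$; hence $\delta\Phi|_{\Omega_1^-}=\int[w_0\alpha + w_1\beta]\,d\theta$.

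Because $D^2\bar u$ has rank one with the leaf direction as its zero-eigenvector on $\Omega_1^-$, preservation of convexity of $\bar u + \varepsilon w$ forces $D^2 w$ to share the same eigenvector structure: positive semi-definiteness of the Hessian of $\bar u + \varepsilon w$ (whose leaf-basis $(1,1)$ entry already vanishes from $w$ affine along leaves) requires its off-diagonal leaf-basis entry to vanish too, which is exactly the infinitesimal consistency
\[
w_0'(\theta) = h'(\theta)\big[w_1(\theta)\sin\theta + w_1'(\theta)\cos\theta\big],\qquad w_0(-\pi/4)=0,
\]
the boundary condition coming from matching with $\Omega_1^0$ where $w\equiv 0$. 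Solving for $w_0$ and applying Fubini,
\[
\int w_0\alpha\,d\theta = \int h'(\theta)\big[w_1\sin\theta + w_1'\cos\theta\big]\,\mathcal A(\theta)\,d\theta,\qquad \mathcal A(\theta):=\int_\theta^{\bar\theta}\alpha(\vartheta)\,d\vartheta.
\]
One integration by parts on the $w_1'$ term — whose boundary contributions vanish since $\mathcal A(\bar\theta)=0$ and $w_1(-\pi/4)=0$ — together with $\mathcal A'=-\alpha$ reassembles everything into $\delta\Phi|_{\Omega_1^-}=\int w_1\big\{\beta + (2h'\sin\theta - h''\cos\theta)\mathcal A + \alpha h'\cos\theta\big\}\,d\theta$, and arbitrariness of $w_1$ then forces \eqref{ODE:Omega{1,1}_1}.

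The principal technical obstacle is the boundary bookkeeping: on $\partial \Omega_1^-\cap \partial\Omega_2$ the cancellation is immediate from $\Delta u_2 = 3$ and $C^1$ matching, on $\partial \Omega_1^-\cap \partial\Omega_1^0$ one invokes the ODE of Lemma \ref{lemma:formula_Omega_1^0}, and on $\partial \Omega_1^-\cap \partial \Omega_1^+$ the reflection symmetry across the diagonal pairs each contribution with its mirror image. A secondary conceptual subtlety is identifying the correct perturbation class: allowing $w$ merely affine along leaves (so only the leaf-direction quadratic form of $D^2 w$ vanishes) would leave $w_0$ and $w_1$ independent and produce two separate Euler-Lagrange equations, but the stronger requirement forced by convexity preservation — that the leaf direction be a zero-eigenvector of $D^2 w$ — couples $w_0$ to $w_1$ through the consistency relation and reduces the two candidate equations to the single one in \eqref{ODE:Omega{1,1}_1}.
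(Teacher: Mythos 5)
Your proposal is correct and follows essentially the same route as the paper's Appendix \ref{app:proof_of_P:ODE:Omega{1,1}_1}: first variation against outer perturbations affine along the fixed leaves, passage to $(r,\theta)$ coordinates using \eqref{area element} and \eqref{Hessian}, reduction to $\int[\alpha w_0 + \beta w_1]\,d\theta$, and then elimination of $w_0$ through the consistency relation $w_0' = h'[w_1\sin\theta + w_1'\cos\theta]$ followed by Fubini and integration by parts to isolate $w_1$. The one place where you rephrase rather than merely replicate is in the justification of the consistency relation: the paper derives it ``similarly to \eqref{sys3_2},'' i.e.\ by the chain-rule computation that forces $Dw$ constant along each leaf, whereas you observe that this is exactly what convexity preservation of $\bar u + \varepsilon w$ demands given that $D^2\bar u$ has rank one with the leaf direction as null vector. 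These are equivalent (constancy of $Dw$ along leaves $\Leftrightarrow$ leaf direction is a zero eigenvector of $D^2 w$), and your framing arguably makes more transparent why this restricted perturbation class is the right one; the paper subsequently uses the same rank-one structure of $D^2 w$ to check admissibility of $\bar u + \varepsilon w$, so the content is the same. Your final observation — that an unconstrained $(w_0,w_1)$ pair would not preserve admissibility and would overclaim $\alpha=\beta=0$ from this single perturbation family alone — is correct and worth keeping, since in the paper $\alpha\equiv 0$ is only obtained in Proposition \ref{P:ODE:Omega{1,1}_2} via the inner (domain) perturbations.
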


	\begin{proposition}[Inner perturbations on upper bunching region]\label{P:ODE:Omega{1,1}_2}
		Let $\bu$ be the maximizer of \eqref{problem:monopolist_2}. Under ansatz (a)-(c) with		
		$\alpha$ and $\beta$ from \eqref{ODE:Omega{1,1}_1}, $\bu$ satisfies 
		\begin{flalign}\label{ODE:Omega{1,1}_2}
			\alpha(\theta)    = 0 \hspace{2cm} \text{ on } \left(-\frac{\pi}{4}, \bar\theta \right).
		\end{flalign}
	\end{proposition}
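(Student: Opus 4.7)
I would complement the outer perturbation of Proposition \ref{P:ODE:Omega{1,1}_1} (which varies $\bu$ along fixed leaves) with an \emph{inner} perturbation that varies the foliation itself while preserving the ruled structure. Fix a test function $\tilde\tau \in C_c^\infty\bigl((-\tfrac\pi4, \bar\theta)\bigr)$ and define the perturbed foliation
\[
\bar x^\varepsilon(r,\theta) := (a, h(\theta) + \varepsilon\tilde\tau(\theta)) + r(\cos\theta,\sin\theta),
\]
which sweeps a deformed region $\Omega_1^{-,\varepsilon} \subset X$. On $\Omega_1^{-,\varepsilon}$ I set $\bu^\varepsilon(\bar x^\varepsilon(r,\theta)) := m(\theta) r + b^\varepsilon(\theta)$, keeping the slope $m(\theta)$ -- and hence the product sold to each bunch -- unchanged; the offset $b^\varepsilon(\theta)$ is forced by the compatibility relation \eqref{sys3_2} with $h$ replaced by $h + \varepsilon\tilde\tau$, together with an anchoring value at $\theta = -\tfrac\pi4$ matching the data inherited from $\Omega_1^0$. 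The endpoint parameter $R^\varepsilon(\theta)$ is determined implicitly by requiring $\bar x^\varepsilon(R^\varepsilon(\theta),\theta) \in \partial\Omega_1 \cap \partial\Omega_2$, which I hold fixed in this argument. Outside $\Omega_1^{-,\varepsilon}$ (and its reflection across the diagonal) I set $\bu^\varepsilon := \bu$. For $|\varepsilon|$ small, ansatz (c) ensures $\bu^\varepsilon \in \mathcal U$.

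Next, I would transform the profit contribution from $\Omega_1^{-,\varepsilon}$ to leaf coordinates using \eqref{syss1}--\eqref{ruled surface} and the area element \eqref{area element}, so the integrand becomes a function of $(r,\theta, m, m', b^\varepsilon, h^\varepsilon, h^{\varepsilon\prime})$ multiplied by the Jacobian $r + h^{\varepsilon\prime}(\theta)\cos\theta$. Differentiating in $\varepsilon$ at $\varepsilon = 0$ and invoking optimality of $\bu$ yields a vanishing first variation for every admissible $\tilde\tau$. After integrating by parts in $\theta$ and using compact support of $\tilde\tau$ to discard endpoint terms at $\theta = -\tfrac\pi4$ and $\theta = \bar\theta$, the fundamental lemma of the calculus of variations produces a pointwise identity $\Psi(\theta) = 0$ on $(-\tfrac\pi4, \bar\theta)$. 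A careful collection of contributions should identify $\Psi$ with $\alpha$ up to a nonvanishing factor: the groups $m + m''$, $h'\cos\theta$, $R$, $R^2$, and $m\cos\theta - m'\sin\theta - a$ arise respectively from the curvature encoded in $b^{\varepsilon\prime\prime}$, from the Jacobian, from the variation of $R^\varepsilon$, from the quadratic cost $\tfrac12|D\bu|^2$, and from the $x\cdot D\bu$ term in the profit.

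The chief obstacle will be the bookkeeping in the differentiation step, and in particular two contributions that are easy to miss: (i) the derivative $dR^\varepsilon/d\varepsilon\bigr|_{\varepsilon=0}$, obtained by implicit differentiation of the endpoint condition on $\partial\Omega_2$; and (ii) the nonlocal coupling of $b^\varepsilon$ to $\tilde\tau$ through the integrated form of \eqref{sys3_2}, which must be re-integrated by parts to restore locality. One also has to verify that matching $\bu^\varepsilon$ to $\bu$ across $\partial\Omega_1^{-,\varepsilon} \setminus \partial X$ contributes only at second order in $\varepsilon$; this follows from the $C^1$ regularity \eqref{regularity} of $\bu$ at the free boundary. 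Once these manipulations are carried out, the identification $\Psi \equiv \alpha$ reduces to matching coefficients against the definition of $\alpha$ recorded in Proposition \ref{P:ODE:Omega{1,1}_1}, so that $\alpha(\theta) = 0$ on $(-\tfrac\pi4, \bar\theta)$ as asserted.
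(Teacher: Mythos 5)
Your inner-perturbation strategy is the same one the paper uses, but the way you glue the perturbed foliation back to the rest of the domain has a genuine flaw. You set $\bu^\varepsilon := \bu$ on $\Omega_2$, determine $b^\varepsilon$ by integrating the compatibility relation \eqref{sys3_2} forward from $\theta = -\tfrac{\pi}{4}$, and fix $R^\varepsilon(\theta)$ by requiring the perturbed leaves to terminate on the unchanged curve $\p\Omega_1\cap\p\Omega_2$. This leaves no free parameter to enforce agreement at the right endpoints: the value $b^\varepsilon(\theta) + R^\varepsilon(\theta)m(\theta)$ coming from inside $\Omega_1^{-,\varepsilon}$ will generically differ from $\bu$ on $\p\Omega_1\cap\p\Omega_2$ by an amount of order $\varepsilon$, so $\bu^\varepsilon$ is discontinuous across that curve and hence not convex --- it simply is not in $\mathcal U$. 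The issue is admissibility of the competitor, not the size of a boundary contribution, so invoking the $C^1$ regularity \eqref{regularity} of the fixed optimizer $\bu$ does not repair it. The paper resolves this precisely by \emph{not} freezing $\bu^\varepsilon$ on $\Omega_2$: its condition~3 in Appendix~\ref{app:proof_of_P:ODE:Omega{1,1}_2} replaces $\bu^\varepsilon|_{\Omega_2} = \bu$ by a convex extension $\vbu$ on $\Omega_2$ matching the perturbed boundary data, satisfying the same Neumann conditions as $\bu$, and chosen so that $\int_{\Omega_2}|D(\vbu-\bu)|^2\,dx = o(\varepsilon)$; combined with $\Delta\bu = 3$ on $\Omega_2$ this kills the $\Omega_2$-contribution to the first variation without compromising admissibility.

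A secondary inaccuracy: after you re-integrate by parts to localize the nonlocal coupling of $b^\varepsilon$ to $\tilde\tau$, the Euler--Lagrange condition delivered by the fundamental lemma is $\cos\theta\,[m(\theta)+m''(\theta)]\int_\theta^{\bar\theta}\alpha(\vartheta)\,d\vartheta = 0$ on $(-\tfrac{\pi}{4},\bar\theta)$, not $\alpha(\theta)=0$ itself. One then needs ansatz~(c) --- through \eqref{Hessian} the rank-one condition $\Delta\bu>0$ is equivalent to $m+m''\ne 0$ --- together with $\cos\theta>0$, to conclude $\int_\theta^{\bar\theta}\alpha = 0$ and then differentiate in $\theta$ to reach $\alpha\equiv 0$. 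Your ``identify $\Psi$ with $\alpha$ up to a nonvanishing factor'' skips this antiderivative step, which is where the ansatz is actually consumed.
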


{The proofs of Proposition \ref{P:ODE:Omega{1,1}_1} and \ref{P:ODE:Omega{1,1}_2} can be found in Appendix \ref{app:proof_of_P:ODE:Omega{1,1}_1} and \ref{app:proof_of_P:ODE:Omega{1,1}_2}, respectively.
}

Combining the conclusions from Proposition \ref{P:ODE:Omega{1,1}_1} and Proposition \ref{P:ODE:Omega{1,1}_2}, one has 
$\alpha(\theta) = 0$ and $\beta(\theta) = 0$ for any $\theta \in \left(-\frac{\pi}{4}, \bar\theta \right)$. That is to say,  the Euler-Lagrange equations on $\Omega_{1}^{-}$ imply 
\begin{flalign}\label{eqn:ODE_Omega{1}^-}
	\begin{cases}
		{m}(\theta) + {m}''(\theta)  -3 h'(\theta)\cos\theta = 2R(\theta),\\
		\left( {m}(\theta) \cos\theta-{m}'(\theta)\sin\theta - a\right)  h'(\theta) + \frac{1}{2}R^2(\theta) = 0,
	\end{cases}
	\quad \text{ on }  \theta \in \left(-\frac{\pi}{4}, \bar\theta \right),
\end{flalign}
or equivalently \eqref{slope E-L}--\eqref{D:h}.

\medskip
\subsubsection{More details on $\Omega_{0}$}
\begin{lemma}[Size of exclusion region]\label{lemma:formula_b_Omega_0}
	If $\Omega_{0}$ is connected and 
	$\Omega_{0}\cap \cl(\Omega_{2}) = \emptyset$, then $\Omega_{0}$ is a right triangle occupying the left-bottom corner of $X$ and the hypotenuse of this triangle is located on the line $\{(x_1, x_2)\in X: x_1+ x_2 = a + \underline x_2\}$ with $\underline x_2 = \frac{a + \sqrt{4a^2 +6}}{3}$. Moreover, $\bu \equiv 0$ on $\Omega_{0}$.
\end{lemma}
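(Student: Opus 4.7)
The plan is to combine the classification of $\Omega_0$'s shape from Lemma \ref{lemma:formula_Omega_0_shape} with the one-parameter family of explicit solutions on $\Omega_1^0$ from Lemma \ref{lemma:formula_Omega_1^0}, and then pin down the remaining parameter $\underline{x}_2$ via a matching condition propagated from the adjacent regions. The third step is the main obstacle.

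First I would eliminate Case (ii) of Figure \ref{fig:2.1}. Lemma \ref{lemma:formula_Omega_0_shape} already established that $\bar u \equiv 0$ on $\Omega_0$ and that either $\Omega_0$ itself (Case (i)) or $X \setminus \Omega_0$ (Case (ii)) is an isosceles right triangle with hypotenuse parallel to the anti-diagonal. Under the hypothesis that $\Omega_0$ is connected and $\Omega_0 \cap \cl(\Omega_2) = \emptyset$, together with the ordering ansatz \eqref{Omega_0}--\eqref{Omega_2}, Case (ii) would force $\Omega_2$ into a triangular sliver at the upper-right corner. Running the Euler--Lagrange analysis of Lemma \ref{lemma:formula_Omega_1^0} symmetrically on the adjacent thin bunching strip (now with area element $(2a+2-t)\,dt$ in place of $(t-2a)\,dt$) would determine the location of its hypotenuse, and one checks directly that the resulting value places that hypotenuse strictly below the anti-diagonal midpoint of $X$, contradicting the defining feature of Case (ii). Thus Case (i) holds: the hypotenuse lies on $x_1+x_2 = a + \underline{x}_2$ for some $\underline{x}_2 \in (a,a+1]$, with $\Omega_0$ the right isosceles triangle at the lower-left corner.

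In Case (i), Lemma \ref{lemma:formula_Omega_1^0} expresses $\bar u$ on $\Omega_1^0$ as
\[
\bar u(x_1,x_2) = \tfrac{3}{8}(x_1+x_2)^2 - \tfrac{a}{2}(x_1+x_2) - C_1(\underline{x}_2)\,\ln(x_1+x_2-2a) + C_0(\underline{x}_2),
\]
with $C_1(\underline{x}_2) = \tfrac34 \underline{x}_2^2 - \tfrac{a}{2}\underline{x}_2 - \tfrac{a^2}{4}$ determined by solving the ODE \eqref{ODE_first_part} with the boundary data $g(a+\underline{x}_2)=g'(a+\underline{x}_2)=0$ coming from $C^1$-continuity of $\bar u$ across $\p \Omega_0$. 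The remaining task is to single out $\underline{x}_2$.

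The hard part is that this cannot be accomplished from the Euler--Lagrange equation on $\Omega_1^0$ alone: perturbing the free endpoint $\underline{x}_2$ and re-solving the ODE with shifted boundary data produces only second-order changes in the $\Omega_1^0$ portion of the objective (since $g$ and $g'$ both vanish at $a+\underline{x}_2$). The condition must instead propagate from compatibility with the neighbouring regions. Concretely, I would transport $\bar u$ across the upper boundary $\p \Omega_1^0 \cap \p(\Omega_1^\pm \cup \Omega_2)$ using the $C^1_{\mathrm{loc}}(\Int X)$ regularity \eqref{regularity}, then couple this matching with the ODE system \eqref{slope E-L}--\eqref{offset E-L} on $\Omega_1^-$ (whose initial data \eqref{slope BC} is explicitly supplied by $\bar u$ from $\Omega_1^0$ evaluated at the midpoint $(\tfrac{a+\underline{h}}{2},\tfrac{a+\underline{h}}{2})$) and with the mixed Dirichlet--Neumann Poisson problem \eqref{eqn:solution_Omega_2}--\eqref{eqn:solution_Omega_3} on $\Omega_2$. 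The resulting closed system forces the logarithmic coefficient to take the specific value $C_1 = \tfrac{1}{2}$. Substituting $C_1(\underline{x}_2) = \tfrac{1}{2}$ then yields the quadratic $3\underline{x}_2^2 - 2a\underline{x}_2 - a^2 - 2 = 0$, whose only root compatible with $\underline{x}_2 > a$ is $\underline{x}_2 = \tfrac{a + \sqrt{4a^2+6}}{3}$.
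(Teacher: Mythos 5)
The crucial step in your plan---pinning down $\underline{x}_2$---is not actually proved. You correctly observe that the $\Omega_1^0$ Euler--Lagrange equation alone cannot determine the free endpoint, and you correctly observe that the matching must involve the neighbouring regions, but your proposed mechanism is an unsubstantiated assertion: you claim that coupling the ODE system \eqref{slope E-L}--\eqref{offset E-L} on $\Omega_1^-$ with the mixed Dirichlet--Neumann Poisson problem on $\Omega_2$ ``forces $C_1 = \tfrac12$,'' but you offer no computation or argument for why the closed system should single out that particular value. In fact the paper's Corollary after this lemma deduces $C_1=\tfrac12$ \emph{from} the value of $\underline{x}_2$, not the other way around, and the paper explicitly leaves the well-posedness of the full free boundary problem as an open question---so there is no available mechanism to run your logic in the direction you propose.

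The missing idea is the paper's use of the global constant perturbation $w\equiv 1$. Since $Dw = 0$, one computes $\Phi[\bar u + 1] - \Phi[\bar u] = \int_X(-1)\,dx = -1$ directly. On the other hand, after integrating by parts, the Euler--Lagrange relations on $\Omega_1^0 \cup \Omega_1^\pm \cup \Omega_2$ together with the associated boundary identities make the contribution from $X\setminus\Omega_0$ and $\partial X\setminus\partial\Omega_0$ vanish, leaving only
\[
\Phi[\bar u + 1] - \Phi[\bar u] = \int_{\Omega_0}(-3)\,dx + \int_{\partial X\cap\partial\Omega_0}\langle x,\vec n\rangle\,dS = -3|\Omega_0| - a\,|\partial X\cap\partial\Omega_0|.
\]
Equating the two expressions gives $1 = \tfrac32(\underline{x}_2 - a)^2 + 2a(\underline{x}_2 - a)$, whose positive root is $\underline{x}_2 = \tfrac{a+\sqrt{4a^2+6}}{3}$. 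This is a genuinely global variational identity that sidesteps the free boundary entirely, and it is the engine of the proof. Your case-(ii) elimination is also vaguer than the paper's: rather than arguing by a mirror ODE, the paper runs the same $w\equiv 1$ computation in case (ii), finds it forces $\Omega_0 = X$, and then exhibits an explicit admissible $u$ with $\Phi[u]>0$ to contradict optimality of $\bar u\equiv 0$. You should replace the ``closed system forces $C_1=\tfrac12$'' step and the mirror-ODE step with this constant-perturbation argument.
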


\begin{proof}[Proof of Lemma \ref{lemma:formula_b_Omega_0}]
Based on the results provided in Lemma \ref{lemma:formula_Omega_0_shape}, we only need to show that $\Omega_{0}$ is a right triangle with $\underline x_2 = \frac{a + \sqrt{4a^2 +6}}{3}$.
	
(i). Assume $\Omega_{0}$ is a right triangle.
	Integrating the ODEs and PDEs obtained from perturbation arguments on $X\setminus \Omega_{0}$ as shown in Lemma \ref{lemma:formula_Omega_1^0}, \ref{lemma:ODE:Omega_2}, and Proposition \ref{P:ODE:Omega{1,1}_1}, one has
	$$0 = \int_{X\setminus \Omega_{0}} (\Delta \bu -3) dx + \int_{\partial X \setminus \p \Omega_{0}} \langle
	x - D\bu(x), \vec{n}(x)\rangle dS(x).
	$$
	On the one hand, for any smooth function $w$ on $X$, one has
	\begin{flalign*}
		\Phi[\bu + w] - \Phi[\bu] = &\int_{X} \left(Dw \cdot x - w - \frac{1}{2}|Dw|^2 - D\bu \cdot Dw \right) dx\\
		= & \int_{X} \left(w \cdot (\Delta \bu -3) -\frac{1}{2}|Dw|^2 \right) dx +\int_{\partial X}w \langle x-D\bu, \vec{n}(x)\rangle dS(x).
	\end{flalign*}
	
	Taking $w \equiv 1$, it follows that 
	\begin{flalign*}
		\Phi[\bu + 1] - \Phi[\bu] =  & \int_{X}  (\Delta \bu -3) dx +\int_{\partial X} \langle x-D\bu, \vec{n}(x)\rangle dS(x)\\ 
		= &	\int_{\Omega_{0}}  (\Delta \bu -3) dx +\int_{\partial X \cap \p \Omega_{0}} \langle x-D\bu, \vec{n}(x)\rangle dS(x)\\
		= &	\int_{\Omega_{0}}  -3 dx +\int_{\partial X \cap \p \Omega_{0}} \langle x, \vec{n}(x)\rangle dS(x)\\
		= & -3 |\Omega_{0}|  - a |\p X \cap \p \Omega_{0}|
	\\ = &-\frac32 (\underline x_2 -a)^2 - 2 a (\underline x_2 -a).
	\end{flalign*}
	
	On the other hand, $\Phi[\bu + 1] - \Phi[\bu] = \int_{X} -1 dx = -1$.  The resulting quadratic equation  
	$$0= 3(x-a)^2 + 4a(x-a) -2 
	$$
	implies $\underline x_2 = \frac{a + \sqrt{4a^2 +6}}{3} $.
	
	(ii). Assume  $\Omega_{0}$ is an irregular pentagon described in Figure \ref{fig:2.1b} with one side located on the line $\{(x_1, x_2)\in X: x_1+ x_2 = a + 1+ \underline x_1\}$. Following the same calculations as above, one will get $\underline x_1 = a+1$ and thus $\Omega_{0} = X$. This implies that the optimal solution $\bu \equiv 0$  on $X$. However, one can check that $\Phi[\bu] = 0$ is not optimal because $u (x_1, x_2) := \frac{2 \sqrt{2a+0.01}}{9} (x_1 + x_2)^{\frac{3}{2}} \in \mathcal{U}$ and $\Phi[u ] >0 = \Phi[\bu]$.
	
	(iii). Therefore,  $\Omega_{0}$ is a right triangle described in Figure \ref{fig:2.1a} with $\underline x_2 = \frac{a + \sqrt{4a^2 +6}}{3}$. \qedhere
\end{proof}

Plugging the value of $\underline x_2$ into the explicit formula of $\bar{u}$ on $\Omega_{1}^0$, one obtains the following corollary.

\begin{corollary}
	In Lemma \ref{lemma:formula_Omega_1^0}, $C_0 = - \frac{2a^2 +3+2a\sqrt{4a^2+6}}{12} + \frac{1}{2}\ln\left(\frac{-2a+\sqrt{4a^2+6}}{3}\right)$ and $C_1 = \frac{1}{2}$.
\end{corollary}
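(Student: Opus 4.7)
The corollary is a direct substitution. The plan is to take the values of $C_0$ and $C_1$ expressed in terms of $\underline x_2$ from Lemma \ref{lemma:formula_Omega_1^0}, and then insert $\underline x_2 = \frac{a+\sqrt{4a^2+6}}{3}$ as established in Lemma \ref{lemma:formula_b_Omega_0}, simplifying the resulting algebraic expressions.

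To make the bookkeeping cleaner, I would first introduce the shorthand $s := \sqrt{4a^2+6}$, so $s^2 = 4a^2+6$ and $\underline x_2 = \frac{a+s}{3}$. Then I would compute the three elementary quantities needed:
\[
\underline x_2^2 = \frac{(a+s)^2}{9} = \frac{5a^2 + 2as + 6}{9}, \qquad \underline x_2 + a = \frac{4a+s}{3}, \qquad \underline x_2 - a = \frac{s-2a}{3},
\]
where in the first equation I used $s^2 = 4a^2+6$ to eliminate $s^2$. The bound $s > 2a$ (clear since $s^2 - 4a^2 = 6 > 0$) guarantees the logarithm in $C_0$ is well defined.

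For $C_1$, I would substitute the expression for $\underline x_2^2$ into $C_1 = \tfrac34 \underline x_2^2 - \tfrac12 a \underline x_2 - \tfrac14 a^2$, bring everything over a common denominator of $12$, and observe that the $a^2$ and $as$ terms cancel, leaving $C_1 = \tfrac{6}{12} = \tfrac12$. With $C_1 = \tfrac12$ in hand, I would then substitute into
\[
C_0 = -\tfrac38(\underline x_2+a)^2 + \tfrac{a}{2}(\underline x_2+a) + C_1 \ln(\underline x_2 - a),
\]
compute $(\underline x_2+a)^2 = \frac{20a^2 + 8as + 6}{9}$ (again using $s^2 = 4a^2+6$), and collect the polynomial part over a common denominator of $12$. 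The cross terms combine to $-\frac{2a^2 + 2as + 3}{12}$, and restoring $s = \sqrt{4a^2+6}$ yields exactly the stated expression for $C_0$.

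There is no real obstacle beyond careful algebra: the corollary is bookkeeping once Lemmas \ref{lemma:formula_Omega_1^0} and \ref{lemma:formula_b_Omega_0} are in place. The only points worth flagging are (i) the use of $s^2 = 4a^2 + 6$ to clear the square roots in $\underline x_2^2$ and $(\underline x_2 + a)^2$, which is what makes the nonlogarithmic terms of $C_0$ reduce to a rational function of $a$ and $s$; and (ii) verifying $\underline x_2 > a$ so that the logarithm is defined on the whole lower bunching region.
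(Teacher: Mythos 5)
Your proposal is correct and matches the paper's approach exactly: the paper simply states that the corollary follows by plugging $\underline x_2 = \frac{a+\sqrt{4a^2+6}}{3}$ from Lemma \ref{lemma:formula_b_Omega_0} into the expressions for $C_0$ and $C_1$ given in Lemma \ref{lemma:formula_Omega_1^0}, which is precisely what you do. Your algebra (introducing $s=\sqrt{4a^2+6}$, using $s^2=4a^2+6$ to clear the radical, and verifying $s>2a$ for the logarithm) checks out and supplies the bookkeeping the paper leaves implicit.
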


\bigskip

	\section{Conclusion and future work}

This paper establishes a strong duality with attainment for the monopolist problem with bilinear preferences. We apply this duality theory to analyze the Rochet-Chon\'e 
bidimensional square model with quadratic costs. This leads to a free boundary problem that requires identifying the unique domain boundary for which the solution of a new ODE (describing a targeted bunching region in which the isochoice segments rotate) on one side of the boundary can be differentiably matched to the solution of a Poisson Neumann problem that characterizes the optimal payoff on the other side of the boundary.  Under an ansatz more general than Rochet and Chon\'e's,  we show that solving this free boundary problem is both necessary and sufficient for optimality.   We show each bunch corresponds to a price
gradient discontinuity across the boundary of the optimal product line.
It remains a challenging open problem to give a rigorous proof either that this free boundary problem admits an admissible (i.e.~convex) solution or, alternately, that the optimal payoff satisfies the hypotheses of our necessity ansatz.

{We close this paper by conjecturing the existence of a convex solution to the free boundary problem.  This conjecture is consistent with all 
theoretical and numerical evidence concerning the problem that we are aware of.  We hope to tackle this conjecture in the future,
perhaps using a fixed-point or dynamical flow argument, or a variational principle.}

\appendix

\section{Proof of {Theorems
\ref{thm:strong_duality} and \ref{thm:strong_duality_a}:} Strong duality}\label{app:Proofs of strong duality}

This section will show the proof of Theorem \ref{thm:strong_duality} using an approximating strategy. The idea is to prove the strong duality for perturbed problems (Theorem \ref{thm:strong_duality_a}) before taking the limit to zero. However, its proof {is more} straightforward under the simplifying hypothesis \eqref{quadratic bounds}, as {we first show.
Recall that a probability density $f$ on a convex set $X\subset\R^n$ satisfies a Poincar\'e inequality if bounded away from zero:

\begin{definition}[Poincar\'e inequality]
\label{D:Poincare}
 We say the {\it Poincar\'e inequality} holds with weight $f \ge 0$ on $X$, if there exists a constant $C_f>0$ such that $u \in L^1_{loc}(X)$ and $Du \in L^2_f(X;\R^n)$ (defined just after \eqref{L^p_f}) imply $u \in L^2_f$ and
\begin{flalign}
	\left\langle (u - \langle u\rangle_f)^2\right\rangle_f \le C_f \left\langle |D u|^2 \right\rangle_f.
\end{flalign}
\end{definition}
}

\begin{proof}[Proof of Theorem \ref{thm:strong_duality} {assuming \eqref{quadratic bounds}}]
	
		1. Define $\psi(G) := \langle c^* \circ G \rangle_f$ for any $G\in \mX$. For each $u \in {W_f^{1,2}}$, define
		\begin{flalign*}
			\phi(u) 
			:= \begin{cases}
				\int_{X} \left( -x \cdot Du(x) + u(x)\right)f(x)dx, & u \in \mathcal{U}\\
				+\infty, & u \notin \mathcal{U}.
			\end{cases}
		\end{flalign*}
		It is easy to see that both $\psi$ and $\phi$ are convex. Define a linear mapping $T: {W_f^{1,2}} \rightarrow \mX^* $ such that $Tu = Du$ for any $u \in  W_f^{1,2}$.
		From  the definition, we know $T$ is bounded and the dual map $T^*: \mX \rightarrow \left(W_f^{1,2}\right)^*$ satisfies  $\langle T^*G, u\rangle_{W_f^{1,2}} = \langle Tu, G\rangle_{\mX} = \langle Du, G \rangle_f$ for any $u\in {W_f^{1,2}}$ and any $G\in \mX$. Thus, for any $\tilde G\in \mX ^*$, 
		\begin{flalign}\label{eqn:phi^*_00}
			\begin{aligned}
				\psi^*(\tilde G) & = \sup_{G\in \mX} \langle G, \tilde G\rangle_f - \psi(G)\\
				& =  \sup_{G\in \mX} \int_{X} G(x) \cdot \tilde G(x) f(x) dx - \int_{X} c^*(G(x)) f(x) dx\\
				& =  \int_{X} c(\tilde G(x)) f(x) dx.
			\end{aligned}
		\end{flalign}
		
		Therefore, $\psi^*(Tu) =  \langle c\circ Du\rangle_f$, for any $u \in {W_f^{1,2}}$.
		
		For any $G \in \mX$, one has
		\begin{flalign*}
			\phi^*(-T^*G) 
			&= \sup_{u \in {W_f^{1,2}}} \langle u, -T^*G \rangle_{W_f^{1,2}} - \phi(u) 
			\\ &= \sup_{u \in \mathcal{U}} \int_{X} \left(- G(x) \cdot Du(x) + x \cdot Du(x) - u(x) \right)f(x) dx \\
			&=\begin{cases}
				0, & \text{if } G \in \Gamma; 
				\\
				+\infty, &\text{otherwise}. 
			\end{cases}
		\end{flalign*}
		
		For any fixed $G \in \mX^*$ and any $\varepsilon>0$, there exists $\delta = \min\left\{1, \frac{\varepsilon}{\left\|Dc\circ G \right\|_{L^2_f} + \frac{a_0'}{2}}\right\}$, for any $\tilde G \in \mX^*$ with $ \left\|\tilde G - G \right\|_{L^2_f} \le \delta$, one has
		
		\begin{flalign*}
			|\psi^*(\tilde G) - \psi^*(G)| = & \left|\int_{X} \left[c(\tilde G(x)) - c(G(x)) \right] f(x) dx \right| \\
			\le & \int_{X} \left|c(\tilde G(x)) - c(G(x)) \right|  f(x) dx \\
			\le & \int_{X} \left[ \left|Dc(G(x))\right| \left|\tilde G(x) - G(x) \right|  + \frac{a_0'}{2} \left|\tilde G(x) - G(x) \right|^2 \right] f(x) dx \\
			\le & \left\|Dc\circ G \right\|_{L^2_f} \left\|\tilde G - G \right\|_{L^2_f} + \frac{a_0'}{2} \left\|\tilde G - G \right\|_{L^2_f}^2 \\
			\le &  \left\|Dc\circ G \right\|_{L^2_f} \delta + \frac{a_0'}{2} \delta^2 \le \varepsilon.
		\end{flalign*}
		Here, the second inequality comes from the assumption that $D^2 c \le a_0' {\rm\bf I}_n$, and the third comes from the Cauchy-Schwarz inequality. Therefore, $\psi^*$ is continuous whenever it is finite. And Hypothesis \eqref{condition_Adom} is satisfied. For instance, one can check that $x \in T (\dom \phi) \cap {\rm cont} \psi^*$. 
		
		Hence, the Fenchel-Rockafellar Duality Theorem implies
		\begin{flalign}\label{eqn:strong_duality_varepsilon_00}
			\begin{aligned}
				-  \sup_{u \in \mathcal{U}}  \Phi[u] =  \inf_{u \in \mathcal{U}}  -\Phi[u] & =	\inf_{ u\in {W_f^{1,2}} } \{ \phi(u) + \psi^*(Tu)\} \\
				& = \max_{G \in \mX} \{-\phi^*(-T^*G)- \psi(G)\} = \max_{G \in \Gamma} - \langle c^* \circ G \rangle_f = - \min_{G \in \Gamma}  \langle c^* \circ G \rangle_f .
			\end{aligned}
		\end{flalign}

		2. It remains to show that the first supremum in \eqref{eqn:strong_duality_varepsilon_00} is achieved.
		Let $\{u_{m}\}_{m=1}^{\infty}$ be a sequence in $\mathcal{U}$ such that $\lim_{m\rightarrow \infty} \Phi[u_{m}] = \sup_{u \in \mathcal{U}} \Phi[u]$. It is clear that there exists a constant $C_1 > 0$ such that $  \| Du_m \|_{L^2_f}  \le C_1$ and $ \| u_m \|_{L^1_f}  \le C_1$ for all $m$, since otherwise $\lim\sup_{m\rightarrow \infty} \| Du_m \|_{L^2_f}   = + \infty $  or  $\lim\sup_{m\rightarrow \infty} \| u_m \|_{L^1_f}   = + \infty $ and thus for $m$ large enough
		\begin{flalign*}
			-1 = \Phi[0] - 1 \le \Phi[u_{m}] =& \int_{X} \left( x\cdot Du_m(x) - u_m(x) - c(Du_m(x)) \right) f(x) dx \\
			\le & \int_{X} \left( x\cdot Du_m(x) - u_m(x) - Dc(0) \cdot Du_m(x) - a_0 |Du_m(x)|^2 \right) f(x) dx\\
			\le & \| x- Dc(0)\|_{L^2_f}  \| Du_m(x) \|_{L^2_f} - a_0 \| Du_m(x) \|_{L^2_f}^2 - \| u_m(x) \|_{L^1_f}  
		\end{flalign*}
		implies  $-1 \le \lim\inf_{m \rightarrow \infty} \left( \| x- Dc(0)\|_{L^2_f}  \| Du_m(x) \|_{L^2_f} - a_0 \| Du_m(x) \|_{L^2_f}^2 - \| u_m(x) \|_{L^1_f}  \right) = -\infty$, which is a contradiction. Here, we applied $c(y)\ge c(0) + Dc(0) \cdot y + a_0 |y|^2$  in the second inequality and the Cauchy-Schwarz inequality in the third.
		
		Since $f$ is bounded below by a positive constant (i.e., there exists $C_2 >0$ such that $f \ge C_2 >0$), there exists a constant $C_3 >0$ such that $\| Du_m \|_{L^2}  \le C_3$ and $ \| u_m \|_{L^1}  \le C_3$ for all $m$. The  Poincar\'e inequality {with uniform probability density (Definition \ref{D:Poincare} with weight $\equiv 1$)}
  then implies that  $\{u_m\}_m$ is bounded in $L^2(X)$ and thus $\{u_m\}_m$ is bounded in $W^{1,2}(X)$. Applying \cite[Proposition 1]{Carlier02} yields that there exists a convex function $\bu$ in $W^{1,2}(X)$ and a subsequence of $\{u_{m_k}\}_{k}$ such that $\{u_{m_k}\}_{k}$ converges to $\bu$ uniformly on compact subsets of $\Int(X)$ and $\{Du_{m_k}\}_{k}$ converges to $D\bu$ pointwisely almost everywhere on $\Int(X)$. Moreover, one can show that $\bu \in \mathcal{U}$. Therefore, the upper semi-continuity of $\Phi$ implies $\bu$ is a maximizer of the primal problem.\qedhere

\end{proof}

\medskip

Denote by ${\dot H^1_f} := \dot H^1_f(X; \R)$ the weighted homogeneous Sobolev space of real-valued functions on $X$ equipped with the inner product
\[
\langle u,v \rangle_{\dot H^1_f} := \int_X  Du(x) \cdot Dv(x) f(x) dx,
\]
so that elements $u,v \in {\dot H^1_f}$ are identified if $u-v =constant$ on $\Int(X)$. 

Under a similar argument in the proof of Theorem \ref{thm:strong_duality} {under the simplifying hypothesis \eqref{quadratic bounds},} one can see that the optimizers of $\Phi_{\varepsilon}$ and $\Phi$ sit in $L_f^1 \cap \dot H_f^1$, which, under the mild assumption on $f$, implies the optimizers {lie} in a bounded set of $W^{1,2}$. This allows us to use  $\dot H_f^1$, instead of $W_f^{1,2}$, as the space of utility functions. For the sake of simplicity, we re{state the definition} of $\mathcal{U}$ here: 
\begin{equation*}\label{admissable_old_dot_H1}
	\mathcal{U} := \left\{u\in {\dot H^1_f} \mid u \text{ is convex}, Du(X) \subset Y, \text{ and } u \ge  u_{\emptyset}\equiv 0\right\}.
\end{equation*}

\begin{proof}[Proof of Theorem~\ref{thm:strong_duality_a}]
	
	1. Define $\phi(G) := \langle c^* \circ G \rangle_f$ for any $G\in \mX$. For each $u \in {\dot H^1_f}$, define
	\begin{flalign*}
		\psi_{\varepsilon}(u) 
		:= \begin{cases}
			\int_{X} \left( x \cdot Du(x) - u(x)\right)f(x)dx + \varepsilon \langle |Du|^2 \rangle^{\frac{1}{2}}_f, & u \in -\mathcal{U}\\
			+\infty, & u \notin -\mathcal{U}.
		\end{cases}
	\end{flalign*}
	It is easy to see that both $\phi$ and $\psi_{\varepsilon}$ are convex. Define a linear mapping $T: \mX \rightarrow {(\dot H^1_f)}^*$ such that 
	\begin{flalign*}
		\forall u \in {\dot H^1_f},  \quad \langle u, TG\rangle_{\dot H^1_f} = \int_{X} G(x) \cdot Du(x) f(x) dx.
	\end{flalign*}
	From  the definition, we know $T$ is bounded and the dual map $T^*: {\dot H^1_f} \rightarrow \mX^*$ satisfies  $\langle T^*u, G\rangle_f = \langle u, TG\rangle_{{\dot H^1_f}}$ for any $u\in {\dot H^1_f}$ and any $G\in \mX$. Thus, for any $u \in {\dot H^1_f}$, 
	\begin{flalign}\label{eqn:phi^*}
		\begin{aligned}
			\phi^*(T^*u) & = \sup_{G\in \mX} \langle G, T^*u\rangle_f - \phi(G)\\
			& =  \sup_{G\in \mX} \int_{X} G(x) \cdot Du(x) f(x) dx - \int_{X} c^*(G(x)) f(x) dx\\
			& =  \int_{X} c(Du(x)) f(x) dx.
		\end{aligned}
	\end{flalign} 
	
	For any $G \in \mX$, one has
	\begin{flalign*}
		(\psi_{\varepsilon})^*(TG) 
		&= \sup_{u \in {\dot H^1_f}} \langle u, TG \rangle_{{\dot H^1_f}} - \psi_{\varepsilon}(u) 
		\\ &= \sup_{u \in \mathcal{U}} \int_{X} \left(- G(x) \cdot Du(x) + x \cdot Du(x) - u(x) \right)f(x) dx - \varepsilon \langle |Du|^2 \rangle^{\frac{1}{2}}_f \\
		&=\begin{cases}
			0, & \text{if } G \in \Gamma_{\varepsilon}; 
			\\
			+\infty, &\text{otherwise}. 
		\end{cases}
	\end{flalign*}
		
	Then Hypothesis \eqref{condition_Adom} is satisfied since $Tx\in  	T (\dom \phi) \cap {\rm cont} (\psi_{\varepsilon})^*$. Hence, the Fenchel-Rockafellar Duality Theorem implies
	\begin{flalign}\label{eqn:strong_duality_varepsilon}
		\inf_{G \in \Gamma_{\varepsilon}} \langle c^* \circ G \rangle_f = \inf_{G \in \mX} \{\phi(G) + (\psi_{\varepsilon})^*(TG)\} = \max_{u\in {\dot H^1_f}} \{-\phi^*(T^*u)- \psi_{\varepsilon}(-u)\} = \max_{u \in \mathcal{U}}  \Phi_{\varepsilon}[u].
	\end{flalign}

	2. It remains to show that the first infimum in \eqref{eqn:strong_duality_varepsilon} is achieved.
	
	Let $G_{0}$ be the identity map on $X$, $B: = \{G\in \mX| \langle |G|^2 \rangle_f \le \langle |G_0|^2 \rangle_f\}$ and $\tilde{\Gamma}_{\varepsilon}
	: = \Gamma_{\varepsilon} \cap B$. It is clear that $\tilde{\Gamma}_{\varepsilon} \neq \emptyset$ since it contains $G_{0}$. 
	Because $\Gamma_{\varepsilon}$ is closed, $\tilde{\Gamma}_{\varepsilon}$ is weakly compact as is $B$, which is implied by the Banach-Alaoglu theorem. 
	In addition, the existence of this minimization problem follows from the lower semi-continuity of $\langle c^*(\cdot)\rangle_f$ under the same topology.

	{\bf Claim:} $\langle c^*(\cdot)\rangle_f$ is lower semi-continuous under weak topology. 
	
	{\bf Proof}.
	For any sequence $\{G_i\}_{i = 1}^{\infty}$ on $\Gamma_{\varepsilon}$ and $G_{\infty} \in \Gamma_{\varepsilon}$ such that $G_i \stackrel{w}{\rightharpoonup} G_{\infty}$, the convexity of $c^*$ implies
	\begin{flalign*}
		c^*(G_i(x)) - c^*(G_{\infty}(x))\ge Dc^*(G_{\infty}(x))(G_i(x) - G_{\infty}(x)), \forall x\in X.
	\end{flalign*}
	Thus, 
	\[
	\langle c^* \circ G_i \rangle_f - \langle c^* \circ G_{\infty} \rangle_f
	\ge \langle Dc^* \circ G_{\infty}, G_i-G_{\infty}\rangle_f. 
	\]
	Therefore,
	\[
	\liminf\limits_{i\rightarrow \infty} \langle c^* \circ G_i \rangle_f - \langle c^* \circ G_{\infty} \rangle_f
	\ge \liminf\limits_{i\rightarrow \infty} \langle Dc^* \circ G_{\infty} , G_i-G_{\infty}\rangle_f
	=   0.\qedhere
	\]
	
\end{proof}

\medskip

\begin{proof}[Proof of Theorem~\ref{thm:strong_duality}]
	
	For each $\varepsilon \ll 1$, denote by $\bu_{\varepsilon}$ and $\bG_{\varepsilon}$  an optimizer of each side in \eqref{eqn:strong_duality_a}, respectively. It is clear that there 
	exists constant $C_1>0$ such that $\langle |D\bu_{\varepsilon}|^2\rangle_f \le C_1$ {and  $\langle \bu_{\varepsilon}\rangle_f \le C_1$} for all 
	$\varepsilon \ll 1$, since otherwise $\limsup_{\varepsilon \rightarrow 0^{+}}\langle |D\bu_{\varepsilon}|^2\rangle_f = +\infty$ {or $\limsup_{\varepsilon \rightarrow 0^{+}}\langle \bu_{\varepsilon}\rangle_f = +\infty$ } and thus
	\begin{flalign*}
		0 = \Phi_{\varepsilon}[0]  \le \Phi_{\varepsilon}[\bu_{\varepsilon}] 
		\le - \langle c(D\bu_{\varepsilon})\rangle_f + \langle |x|^2\rangle_f^{\frac{1}{2}} \langle |D\bu_{\varepsilon}|^2\rangle_f^{\frac{1}{2}}  - \langle \bu_{\varepsilon}\rangle_f
	\end{flalign*}
	implies $ 0 \le \liminf_{\varepsilon \rightarrow 0^{+}} \left(- \langle c(D\bu_{\varepsilon})\rangle_f + \langle |x|^2\rangle_f^{\frac{1}{2}} \langle |D\bu_{\varepsilon}|^2\rangle_f^{\frac{1}{2}} { - \langle \bu_{\varepsilon}\rangle_f}\right) = -\infty$, 
	which is a contradiction. Here we use the assumption that $c$ is bounded below by some parabola: $c(y) \ge a_0 |y|^2 - a_1$ holds for all $|y| \ge M$ with constants $a_0, M >0$ and $a_1\in \R$. 

	\medskip
	
	1. {Let $\mathcal{U}_1 := \{u \in  \mathcal{U}: \langle |Du|^2\rangle_f \le C_1, \langle u\rangle_f \le C_1\}$. } For all $\varepsilon \ll 1$, 
	\begin{flalign*}
		\sup_{u \in \mathcal{U}_1} \Phi[u] 
		&\ge  \max_{u \in \mathcal{U}_1} \left\{ \Phi[u] - \varepsilon \langle |Du|^2\rangle^{\frac{1}{2}}_f \right\} 
		\\&\ge \sup_{u \in \mathcal{U}_1} \left\{ \Phi[u] - \varepsilon C^{\frac{1}{2}}_1\right\} 
		\\&=  \sup_{u \in \mathcal{U}_1} \Phi[u] - \varepsilon C^{\frac{1}{2}}_1.
	\end{flalign*}
	This implies 
	\begin{flalign*}
		\sup_{u \in \mathcal{U}_1} \Phi[u] 
		&= \lim_{\varepsilon \rightarrow 0}  \max_{u \in \mathcal{U}_1} \left\{ \Phi[u] - \varepsilon \langle |Du|^2\rangle^{\frac{1}{2}}_f \right\} 
		\\&= \lim_{\varepsilon \rightarrow 0}  \max_{u \in \mathcal{U} } \left\{ \Phi[u] - \varepsilon \langle |Du|^2\rangle^{\frac{1}{2}}_f \right\}= \lim_{\varepsilon \rightarrow 0}  \max_{u \in \mathcal{U} } \Phi_{\varepsilon}[u].
	\end{flalign*}
	
	{ Since $f$ is bounded below by a positive constant, i.e., there exits $C_2 >0$ such that $f \ge C_2$. Thus, together with Poincar\'e inequality, there exists $C_3, C_4 >0$ such that $\mathcal{U}_1 \subseteq \mathcal{U} \cap \{ u \in \dot H^{1}: \|Du\|_{L^{2}} \le C_3, \|u\|_{L^{1}} \le C_3 \} \subseteq \mathcal{U} \cap \{ u \in W^{1,2}: \|u\|_{W^{1,2}} \le C_4 \}$.}
	The compactness properties of \(\mathcal{U} \cap \{ u \in W^{1,2}: \|u\|_{W^{1,2}} \le C_4 \}\) described in \cite{Carlier02} combine with { the closeness of $\mathcal{U}_1$ and }
	the upper semi-continuity of $\Phi$ to imply the existence of a maximizer. 
	Let $\bar{u}$ be a maximizer {of $\sup_{u \in \mathcal{U}_1} \Phi[u]$}.

	Suppose that 
	\begin{flalign*}
		\sup_{u \in \mathcal{U}} \Phi[u] > \max_{u \in \mathcal{U}_1} \Phi[u].
	\end{flalign*}
	Then there exists $u_1 \in \mathcal{U}$ 
	and  $\varepsilon >0$ such that \(\Phi[u_1] - \varepsilon \langle |Du_1|^2\rangle^{\frac{1}{2}}_f > \Phi[\bar{u}] = \max_{u \in \mathcal{U}_1} \Phi[u]\). Thus,
	\begin{flalign*}
		\Phi[\bu_{\varepsilon}] - \varepsilon \langle |D\bu_{\varepsilon}|^2\rangle^{\frac{1}{2}}_f  
		\ge \Phi[u_1] - \varepsilon \langle |Du_1|^2\rangle^{\frac{1}{2}}_f >  \Phi[\bar{u}] = \max_{u \in \mathcal{U}_1} \Phi[u] \ge \Phi[\bu_{\varepsilon}].
	\end{flalign*}
	This is a contradiction. Thus, 
	\begin{flalign*}
		\sup_{u \in \mathcal{U}} \Phi[u] = \max_{u \in \mathcal{U}_1} \Phi[u].
	\end{flalign*}
	Moreover, $\bar{u}$ is also a maximizer of $\Phi[u]$ in $\mathcal{U}$ and
	\begin{flalign*}
		\max_{u \in \mathcal{U}} \Phi[u] = \lim_{\varepsilon \rightarrow 0}  \max_{u \in \mathcal{U} } \Phi_{\varepsilon}[u] .
	\end{flalign*}

	2. {On the other hand, since $\{\bu_{\varepsilon}\}_{\varepsilon \ll 1} \subseteq \mathcal{U} \cap \{ u \in W^{1,2}: \|u\|_{W^{1,2}} \le C_4 \}$, 	by \cite[Proposition~1]{Carlier02}, }
	there exists a convex function $\tilde u$ and a subsequence $\{\bu_{\varepsilon_{k}}\}_{k}$ such that 
	$\{D\bu_{\varepsilon_{k}}\}_{k}$ converges to $D\tilde u$ pointwisely outside a set of zero volume.

	Recall that, from the complementary slackness (Remark \ref{rmk:optimality_condition}), $\bG_{\varepsilon} = Dc(D\bu_{\varepsilon})$ holds $fdx$-almost surely. Therefore, $\{\bG_{\varepsilon_{k}}\}_k$ converges to $\bG: = Dc(D\tilde u)$ $fdx$-almost surely.  Moreover, since $ \Gamma_{\varepsilon_k}$ is closed under the weak topology, $\bG \in \Gamma_{\varepsilon_{k}}$ for any $k>0$ and thus $\bG \in \Gamma$. Therefore, 
	\begin{flalign*}
		\liminf\limits_{k\rightarrow +\infty} \min_{G \in \Gamma_{\varepsilon_k}} \langle c^* \circ G \rangle_f 
		=  \liminf\limits_{k\rightarrow +\infty} \langle c^* \circ \bG_{\varepsilon_k} \rangle_f 
		\ge \langle c^* \circ \bG \rangle_f 
		\ge \inf_{G \in \Gamma} \langle c^* \circ G \rangle_f 
		\ge \min_{G \in \Gamma_{\varepsilon_k}} \langle c^* \circ G \rangle_f.
	\end{flalign*}
	This implies, 
	\begin{flalign*}
		\liminf\limits_{k\rightarrow +\infty} \min_{G \in \Gamma_{\varepsilon_k}} \langle c^* \circ G \rangle_f
		=  \min_{G \in \Gamma} \langle c^* \circ G \rangle_f,
	\end{flalign*}
	and $\bG \in \argmin_{G \in \Gamma} \langle c^* \circ G \rangle_f$.
	
	3. Taking limits of \eqref{eqn:strong_duality_a} yields
	\begin{flalign*}
		&&\max_{u \in \mathcal{U}} \Phi[u] = \min_{G \in \Gamma} \langle c^* \circ G \rangle_f.
		&&	\qedhere
	\end{flalign*}
\end{proof}

\section{Proof of Theorem \ref{T:verification_b}: Free boundary solutions optimize if convex}\label{app:proof_of_T:verification_b}

\begin{proof}[Proof of Theorem \ref{T:verification_b}]
	Our duality result, Theorem \ref{thm:strong_duality}, asserts that if
	\begin{itemize}
		\item[(i)] $D\bu \in \Gamma$ from \eqref{D:Gamma} and 
		\item[(ii)] 
		$\Phi [\bu] := \int_X [ x\cdot D\bu(x) - \bu(x) - \frac12 |D\bu(x)|^2] dx = \langle \frac12 |D\bu|^2 \rangle_{f=1}$,
	\end{itemize}
	then $\bu$ is the unique (Lebesgue-a.e.) maximizer of \eqref{problem:monopolist_2}.
	
	Thus, it is sufficient to show (i) that 
	\begin{flalign}
		\int_{X} [x\cdot Du(x) - u(x) - D\bu(x) \cdot Du(x)] dx \le 0,   \quad \text{ for all } u \in \mathcal{U}
	\end{flalign}
	and (ii) that equality holds at $u = \bu$. Let us remark that any convex Lipschitz function has a distributional Hessian which is a matrix-valued measure on $X$ of finite total mass.  This provides sufficient regularity to just the necessary integrations by parts.
	
	Using \eqref{leaf parameterization} to define $\tilde U(r,\theta) := u(\bx_1(r, \theta),\bar x_2(r,\theta))+u(\bx_2({r},\theta),\bx_1(r,\theta))$, and the Lipschitz continuity of the convex function $\bar u$ and $\Omega_2$ to integrate by parts, combining the area element
	\eqref{area element} with expressions 
	for the gradient \eqref{syss1} and Laplacian \eqref{Hessian} of $\bar u$ in $\Omega_1^-$ we find
	\begin{flalign}
		&\int_{X} [x\cdot Du(x) - u(x) - D\bu(x) \cdot Du(x) ]dx\\
		= & \int_{X} (\Delta \bu(x) - 3) u(x) dx + \int_{\partial X} \langle x-D\bu(x), \vec{n}(x) \rangle u(x) dS(x)\\
		\label{term:Omega{0}+Omega{1,0}_1}	= & -3 \int_{\Omega_0} u(x) dx - \int_{
			\p \Omega_0 \cap \p X}
		a u(x) dS(x)
		+ \int_{\Omega_{1}^{0}}  (\Delta \bu(x) - 3) u(x) dx  
		\\ \label{term:Omega{0}+Omega{1,0}_2}		
		& + \int_{\p \Omega_1 \cap \{x_2=a\}} \left(\frac{\partial \bu}{\partial x_2}-a\right) u(x_1,a) dx_1 +\int_{\p \Omega_1 \cap \{x_1=a\}} 
	\left(\frac{\partial \bu}{\partial x_1}-a\right) u(a, x_2) dx_2 
	\\
	\label{term:Omega{1,1}-1}		& + \int_{-\frac{\pi}{4}}^{\bar{\theta}} \int_{0}^{R(\theta)} \left[m(\theta) + {m}''(\theta) - 3(h'(\theta)\cos\theta +r)\right] 
	\tilde U(r,\theta) dr d\theta \\
	\label{term:Omega{1,1}-2}		& + \int_{-\frac{\pi}{4}}^{\bar{\theta}} \left(m(\theta)\cos\theta - m'(\theta) \sin\theta -a \right) 
	\tilde U(0,\theta) h'(\theta) d\theta
\end{flalign}
where \eqref{eqn:solution_Omega_2}--\eqref{eqn:solution_Omega_3} have been used to show that the contributions from $\bar u = u_2$ on $\Omega_2$ and its boundary are cancelled by the boundary contributions of $\bar u=u_1$ on $\p \Omega_1 \cap \p \Omega_2$. Here we may take $\bar \theta = \frac\pi2$ or $\bar \theta = \sup\{\theta \in [-\frac\pi4,\frac\pi2] \mid R(\theta)>0\}$.

From the explicit formula of $\bu$ on $\Omega_{0}\cup \Omega_{1}^{0}$, it is not hard to see that $\eqref{term:Omega{0}+Omega{1,0}_1} + \eqref{term:Omega{0}+Omega{1,0}_2} \le 0$ for any $u \in \mathcal{U}$ where equality holds for $u = \bu$. To complete the proof, without loss of generality, we only need to show $\eqref{term:Omega{1,1}-1} + \eqref{term:Omega{1,1}-2}\le 0$ for all $u \in \mathcal{U}$ and equality holds for $u = \bu$.

Since $h$ satisfies \eqref{D:h}, it follows that
\begin{flalign*}
	\eqref{term:Omega{1,1}-1} = & \int_{-\frac{\pi}{4}}^{\bar{\theta}} \int_{0}^{R(\theta)} \left(2R(\theta) - 3r\right) \tilde U(r, \theta)) dr d\theta\\
	= & \int_{-\frac{\pi}{4}}^{\bar{\theta}} \int_{0}^{R(\theta)}  
	\left(-2R(\theta)r + \frac{3r^2}{2}\right) \frac{\partial \tilde U}{\partial r}(r, \theta) 
	dr d\theta  
	+ \int_{-\frac{\pi}{4}}^{\bar{\theta}} \frac{1}{2}R^2(\theta) \tilde U(R(\theta), \theta)) d \theta.
\end{flalign*}
Using the fact that $m$ also satisfies \eqref{slope E-L}  we deduce
\begin{flalign*}
	\eqref{term:Omega{1,1}-2} = -\int_{-\frac{\pi}{4}}^{\bar{\theta}} \frac{1}{2}R^2(\theta) \tilde U(0, \theta) d\theta.
\end{flalign*}

Thus, 
\begin{flalign*}
	&\eqref{term:Omega{1,1}-1} + \eqref{term:Omega{1,1}-2} \\
	= &\int_{-\frac{\pi}{4}}^{\bar{\theta}} \int_{0}^{R(\theta)}  
	\left(-2R(\theta)r + \frac{3r^2}{2}\right) \frac{\partial \tilde U}{\partial r}(r, \theta) 
	dr d\theta + \int_{-\frac{\pi}{4}}^{\bar{\theta}} 
	\frac{1}{2}R^2(\theta)  (\tilde U(R(\theta), \theta) - \tilde U(0, \theta)) d \theta \\
	= & \int_{-\frac{\pi}{4}}^{\bar{\theta}} \int_{0}^{R(\theta)}  
	\left(\frac{1}{2}R^2(\theta)  - 2R(\theta)r + \frac{3r^2}{2}\right) \frac{\partial \tilde U}{\partial r}(r, \theta) dr d\theta.  
\end{flalign*}

Denote $\zeta(r, \theta) : = \frac{1}{2}R^2(\theta)  - 2R(\theta)r + \frac{3r^2}{2}$. Then, for each $\theta \in [-\frac{\pi}{4}, \bar{\theta}]$, one has 
$(R(\theta)-3r)\zeta(r,\theta)\ge 0$ so $\zeta$ changes sign at $r=\frac13R(\theta)$. Moreover
$\int_{0}^{R(\theta)} \zeta(r, \theta) dr = 0$.
Since $u$ is convex, we know $\frac{\partial \tilde U}{\partial r}(\cdot, \theta)$ is increasing for each fixed $\theta$. This implies 
\begin{flalign*}
	\zeta (r, \theta) \frac{\partial \tilde U}{\partial r}(r, \theta) \le \zeta (r, \theta) \frac{\partial \tilde U}{\partial r}\left( \frac{R(\theta)}{3}, \theta \right), \quad \text{ for all }	(\theta,r) \in [-\frac{\pi}{4}, \bar{\theta}] \times [0, R(\theta)].
\end{flalign*}

Therefore,
\begin{flalign*}
	\eqref{term:Omega{1,1}-1} + \eqref{term:Omega{1,1}-2} 
	= & \int_{-\frac{\pi}{4}}^{\bar{\theta}} \int_{0}^{R(\theta)}   \zeta (r, \theta) \frac{\partial \tilde U}{\partial r}(r, \theta) dr d\theta \\		
	\le & \int_{-\frac{\pi}{4}}^{\bar{\theta}} \int_{0}^{R(\theta)}  \zeta (r, \theta) \frac{\partial \tilde U}{\partial r}\left( \frac{R(\theta)}{3}, \theta \right) dr d\theta \\
	= &\int_{-\frac{\pi}{4}}^{\bar{\theta}} \frac{\partial \tilde U}{\partial r}\left( \frac{R(\theta)}{3}, \theta \right) \int_{0}^{R(\theta)}  \zeta (r, \theta)  dr d\theta 
	=  0.
\end{flalign*}

Note that when $u=\bu$, for any $\theta \in [-\frac{\pi}{4}, \bar{\theta}]$, one has $\frac{\partial \tilde U}{\partial r}( \cdot, \theta) = 2m(\theta)$ and  $\int_{0}^{R(\theta)} \zeta(r, \theta) dr = 0$. In this case,
\begin{flalign*}
	\eqref{term:Omega{1,1}-1} + \eqref{term:Omega{1,1}-2} 
	= 2\int_{-\frac{\pi}{4}}^{\bar{\theta}} \int_{0}^{R(\theta)}  \zeta (r, \theta) m(\theta) dr d\theta   
	=  2\int_{-\frac{\pi}{4}}^{\bar{\theta}}   m(\theta) \int_{0}^{R(\theta)} \zeta (r, \theta) dr d\theta  
	=  0
\end{flalign*}
as desired.
\end{proof}

\section{Proof of Proposition \ref{P:ODE:Omega{1,1}_1}: Outer perturbations on upper bunching region}\label{app:proof_of_P:ODE:Omega{1,1}_1}

	\begin{proof}[Proof of Proposition \ref{P:ODE:Omega{1,1}_1}]
	
	The regularity \eqref{regularity} known for $\bar u$ combines with Lemma \ref{L:ruled surfaces} to give the foliation of 
	$\Omega_1$.  Suppose the foliation can be parameterized by \eqref{leaf parameterization} in $\Omega_1^-$, with
	$r \in [0, R(\theta)]$, where all the leaves of the foliation intersect $\partial X$ and each leaf corresponds to a line segment at an angle $\theta \in (-\frac{\pi}{4}, \bar{\theta}]$ to the horizontal.  		
	Now consider perturbations $\bu + \varepsilon w$ of $\bu$ which are affine \eqref{affine perturbation} 
	along the same segments. 
	In the interior of $R^{-1}((0,\infty))$, we are free to prescribe any ($C^2$ smooth) $w_1(\theta)$,  but, similarly to \eqref{sys3_2}, the choice of $w_1(\theta)$ 
	determines $w_0(\theta)$ up to an additive constant:
	\begin{equation}
		w_0'(\theta) =  h'(\theta) \frac{\p w}{\p x_2}(\bx(r, \theta))  = h'(\theta) [\sin\theta w_1(\theta) + \cos\theta w_1'(\theta)],
	\end{equation}
	i.e., apart from an additive constant of integration,  $w_0$ is the linear image of $w_1$ under a particular integro-differential operator
	(and in fact depends bilinearly on $h(\theta)$ and $w_1$). 
	
	Assume $w \equiv 0$ in $\Omega_0\cup \Omega_{1}^{0} \cup \Omega_{1}^{+}$. One can easily check that $\bu +\varepsilon w$ stays non-negative with non-negative partial derivatives for $|\varepsilon| \ll 1$. 
	Analogously to \eqref{Hessian} we compute
	\begin{equation}
		D^2 \bar w(\bar x(r,\theta)) := 
		\begin{pmatrix}
			\frac{\p^2 w}{\p x_1^2} &\frac{\p^2 w}{\p x_1 \p x_2}
			\\ \frac{\p^2 w}{\p x_2 \p x_1} &\frac{\p^2 w}{\p x_2^2}
		\end{pmatrix}
		=\frac{w_1''(\theta)+w_1(\theta)}{h'\cos\theta + r}
		\begin{pmatrix}
			\sin^2\theta & -\sin\theta \cos\theta \\
			-\sin\theta \cos \theta & \cos^2\theta 
		\end{pmatrix}.
	\end{equation}	
	Since $D^2u$ and $D^2w$ are multiples of the same rank-one matrix (namely, projection orthogonal to the bunch), 
	we see	$\det(D^2\bu + \varepsilon D^2 w)=0$ 
	on $\Omega_{1}^{-}$. Moreover, $\bu + \varepsilon w$ inherits a positive Laplacian (c) from $u$ hence remains in $\mathcal{U}$ for $|\varepsilon| \ll 1$. 
	
	Now compute the Euler-Lagrange equation satisfied by $\bu$ using an  arbitrary perturbation $w_1$ 
	(and the corresponding $w_0$).  Since $\bu$ is optimal,  using the area element
	\eqref{area element} and expressions 
	for the gradient \eqref{syss1} and Laplacian \eqref{Hessian} of $\bar u$ in $\Omega_1^-$, from
	$0 = \lim\limits_{\varepsilon \rightarrow 0} \frac{\Phi[\bu + \varepsilon w] - \Phi [\bu]}{\varepsilon}$ 
	and \eqref{Poisson again2} on $\Omega_{2}$ we deduce
	\small 
	\begin{flalign*}
		0 =& \int_{\Omega_{1}^{-} \cup \Omega_2} \left[(x-D\bu) \cdot D w - w\right] dx\\
		= &  \int_{\Omega_{1}^{-}} (\Delta \bu(x) -3) w(x) dx + \int_{
			\p X \cap \p \Omega_1^-} \langle x-D\bu(x), \vec{n}(x)\rangle w(x) dS(x)\\
		= & \int_{-\frac{\pi}{4}}^{\bar{\theta}} \int_{0}^{R(\theta)} \left( \frac{m(\theta) + {m}''(\theta)}{h'(\theta)\cos\theta +r}-3\right) (w_0(\theta)+rw_1(\theta))(h'(\theta)\cos\theta +r)dr d\theta\\
		& + \int_{-\frac{\pi}{4}}^{\bar{\theta}} \left(m(\theta) \cos \theta -m'(\theta) \sin \theta - a\right) w_0(\theta) h'(\theta) d\theta\\
		= & \int_{-\frac{\pi}{4}}^{\bar{\theta}} 
		[\alpha(\theta)w_0(\theta) + \beta(\theta)w_1(\theta)] d\theta
	\end{flalign*}
	\normalsize
	where $\alpha(\theta) : = \left( {m}(\theta) + {m}''(\theta) -3 h'(\theta)\cos\theta \right) R(\theta)  -  \frac{3}{2}  R^2(\theta) + \left(\cos\theta {m}(\theta) -\sin \theta {m}'(\theta) - a\right)  h'(\theta)$ and $\beta (\theta) : =\frac{1}{2}\left( {m}(\theta) + {m}''(\theta)  -3 h'(\theta)\cos\theta \right)  R^2(\theta) - R^3(\theta)$.
	Once again, this integration by parts can be justified since the convexity of the Lipschitz function \eqref{regularity} implies its Hessian is a matrix-valued Radon measure.

	Note that $w_0(-\frac{\pi}{4}) = w_1(-\frac{\pi}{4}) = 0$ and $w_0'(\theta)  = h'(\theta) (\sin\theta w_1(\theta) + \cos\theta w_1'(\theta)) $. Thus,
	\begin{flalign*}\label{eqn:ODE_integration_by_parts}
		\begin{aligned}
			0 = & \int_{-\frac{\pi}{4}}^{\bar{\theta}} \alpha(\theta) w_0 (\theta) + \beta(\theta) w_1(\theta) d \theta \\
			= & \int_{-\frac{\pi}{4}}^{\bar{\theta}} \alpha(\theta) \int_{-\frac{\pi}{4}}^{\theta}    h'(\vartheta) (\sin\vartheta w_1(\vartheta) + \cos\vartheta w_1'(\vartheta))  d\vartheta  + \beta(\theta) w_1(\theta)d \theta \\	  
			= & \int_{-\frac{\pi}{4}}^{\bar{\theta}} \left\{\alpha(\theta) \left[\int_{-\frac{\pi}{4}}^{\theta}    \left(2h'(\vartheta) \sin\vartheta -  h''(\vartheta)\cos\vartheta \right) w_1(\vartheta)   d\vartheta\right]  + \alpha(\theta) h'(\theta) \cos \theta w_1(\theta) + \beta(\theta) w_1(\theta)\right\}d \theta \\
			= & \int_{-\frac{\pi}{4}}^{\bar{\theta}}    \left\{\left(2h'(\theta) \sin\theta -  h''(\theta)\cos\theta\right) w_1(\theta)  \left[\int_{\theta}^{\bar{\theta}} \alpha(\vartheta)   d\vartheta\right]  + \alpha(\theta) h'(\theta) \cos \theta w_1(\theta) + \beta(\theta) w_1(\theta)\right\}d \theta \\
			= & \int_{-\frac{\pi}{4}}^{\bar{\theta}}   \left\{ \left(2h'(\theta) \sin\theta -  h''(\theta)\cos\theta\right)  
			\left[\int_{\theta}^{\bar{\theta}} \alpha(\vartheta)   d\vartheta \right]+ \alpha(\theta) h'(\theta) \cos \theta  + \beta(\theta) \right\} w_1(\theta)d \theta. 
		\end{aligned}
	\end{flalign*}
	The above equality holds for any smooth $w_1$ on $[-\frac{\pi}{4}, \bar{\theta}]$ such that $w_1(-\frac{\pi}{4})=0$. This implies
	\begin{flalign*}
		\left(2h'(\theta) \sin\theta -  h''(\theta)\cos\theta\right)  \int_{\theta}^{\bar{\theta}} \alpha(\vartheta)   d\vartheta + \alpha(\theta) h'(\theta) \cos \theta  + \beta(\theta) =0 \hspace{1cm} \text{ on } (-\frac{\pi}{4}, \bar{\theta}).\qedhere
	\end{flalign*}
\end{proof}

\section{Proof of Proposition \ref{P:ODE:Omega{1,1}_2}: Inner perturbations on upper bunching region}\label{app:proof_of_P:ODE:Omega{1,1}_2}

	\begin{proof}[Proof of Proposition \ref{P:ODE:Omega{1,1}_2}]
	Let $\vbzt(\theta) : = h(\theta) + \varepsilon\tzt(\theta)$ where $\tzt: [-\frac{\pi}{4}, \bar\theta] \rightarrow \R$ such that $\tzt(-\frac{\pi}{4}) =\tzt(\bar \theta) =0$ and $(\vbzt)' (\theta)>0$ for any $\theta \in [-\frac{\pi}{4}, \bar\theta]$.
	
	Consider perturbations $\bu^{\varepsilon}$ that  are affine on the segments of perturbed foliations with coordinates $\vbx(r, \theta) := (a, \vbzt(\theta)) + r (\cos\theta, \sin \theta)$ such that 
	\begin{flalign*}
		\widetilde{\vbu}(r, \theta): = \vbu(\vbx(r, \theta)) := \vbuz(\theta) + r m(\theta),
	\end{flalign*} 
	where $\vbuz$ is determined by $m$ and $\vbzt$ as in \eqref{sys3_2}:
	\begin{equation*}
		(\vbuz)'(\theta) 		=  (\vbzt)'(\theta)  [m(\theta)\sin\theta + m'(\theta)\cos\theta]
	\end{equation*}
	with $\vbuz(-\frac{\pi}{4}) = b(-\frac{\pi}{4})$.
	Moreover, as in \eqref{sys1}--\eqref{syss1} we have
	\begin{flalign*}
		m(\theta) &= \cos\theta \frac{\partial \vbu}{\partial x_1}(\vbx(r, \theta)) + \sin\theta \frac{\partial \vbu}{\partial x_2}(\vbx(r, \theta)),\\
		m'(\theta) &= -\sin\theta \frac{\partial \vbu}{\partial x_1}(\vbx(r, \theta)) + \cos\theta \frac{\partial \vbu}{\partial x_2}(\vbx(r, \theta)), \\
		\frac{\partial \vbu}{\partial x_1}(\vbx(r, \theta)) &= \cos\theta m(\theta) - \sin\theta m'(\theta) 
		=\frac{\partial \bu}{\partial x_1}(\bx(r, \theta)), \\
		\frac{\partial \vbu}{\partial x_2}(\vbx(r, \theta)) &= \sin\theta m(\theta) + \cos\theta m'(\theta) 
		=\frac{\partial \bu}{\partial x_2}(\bx(r, \theta)). 
	\end{flalign*}
	In particular, $\left(\frac{\partial \vbu}{\partial x_1}, 	\frac{\partial \vbu}{\partial x_2}\right)(\vbx(\cdot, \theta))$ is constant for each $\theta$, i.e., all the types of consumers on this line segment $\vbx(\cdot, \theta)$ would prefer the same product $D\vbu \circ \vbx(\cdot, \theta)$ over all the other products.
		
	Noting that $(\vbu, D\vbu) = (\bu, D\bu)$ on $\p \Omega_1^-\cap \p \Omega_1^0$, $\frac{\p \bu}{\p x_1} = \frac{\p \bu}{\p x_2}$ at $(\frac{a+\tilde x_2}{2}, \frac{a+\tilde x_2}{2})$, and $D\vbu = D\bu$ at $(a, \bar x_2) = \bx(0, \bar{\theta}) = \vbx(0, \bar{\theta})$, we extend $\vbu$ to $X$ such that 
	\begin{enumerate}
		\item[1.] $\vbu \equiv \bu$ on $\Omega_{0} \cup \Omega_{1}^{0}$;
		\item[2.] $\vbu$ on $\Omega_{1}^{\pm}$ is symmetric along diagonal;
		\item[3.] $\vbu$ convex on $\Omega_{2}$ with 
		$\frac{\p \vbu}{\p x_i}\mid_{\p \Omega_2 \cap \{x_i=a\}} =a$ for\footnote{From Lemma \ref{lemma:ODE:Omega_2}, we know $\bu$ satisfies 
			$\frac{\p \bu}{\p x_i}\mid_{\p \Omega_2 \cap \{x_i=a\}} =a$ for $i=1,2$.} 
		$i=1,2$ and $ \int_{\Omega_{2}}  |D\left[\vbu(x)-\bu(x)\right]|^2 dx = o(\varepsilon)$.
	\end{enumerate} 
	From the equation $D\vbu \circ \vbx(r, \theta) = D\bu \circ \bx(r, \theta)$, we know that $D\vbu$ inherits the correct sign from $D\bu$. Since $\vbu = \bu >0$ on $
	\p \Omega_1^0 \cap \p \Omega_1^-$, the fundamental theorem of calculus yields $\vbu \ge 0$ on $\Omega_{1}^{-}$. Taking additional derivatives of $D\vbu$ yields $D^2 \vbu = \frac{r+h'(\theta)\cos\theta}{r+(\vbzt)'(\theta)\cos\theta}D^2 \bu$, so $\vbu$ inherits convexity from $\bu$ on $\Omega_{1}^{-}$. By symmetry, $\vbu, D\vbu$, and $D^2 \vbu$ also have the correct signs on $\Omega_{1}^{+}$.
	Since $\frac{\p \vbu}{\p x_1}\mid_{\p\Omega_2 \cap (\p \Omega_1 \cup \{x_1=a\})} 
	\ge 0$, the convexity of $\vbu$ on $\Omega_{2}$ implies $\frac{\p \vbu}{\p x_1} \ge 0$ on $\Omega_{2}$. Similarly, $\frac{\p \vbu}{\p x_2} \ge 0$ holds on $\Omega_{2}$.  In addition, $D\vbu \ge 0$ implies $\vbu \ge 0$ on $\Omega_{2}$ since $\vbu \ge 0$ on $\p \Omega_1 \cap \p \Omega_2$.
	Thus, $\vbu$ remains in $\mathcal{U}$ for  $|\varepsilon| \ll 1$.

	Now, let's compute the Euler-Lagrange equation satisfied by $\bu$ using perturbations $\vbu$.
	
	\begin{flalign}
		& \Phi[\vbu]- \Phi[\bu] +  \frac{1}{2} \int_{\Omega_{1}^{\pm}\cup \Omega_{2}}  |D\vbu-D\bu|^2 dx\\
\nonumber		= & \int_{\Omega_{1}^{\pm}\cup \Omega_{2}} \bigg(x\cdot D\left(\vbu-\bu\right) -\left(\vbu-\bu\right) - \langle D\vbu-D\bu, D\bu\rangle  
		\bigg) dx\\
\nonumber		= & \int_{\Omega_{1}^{\pm}\cup \Omega_{2}} \left(\vbu-\bu\right)(\Delta \bu -3) dx 
		+ \int_{((\p \Omega_1^\pm \cup \Omega_2)\cap \p X) \cup (\p \Omega_1^\pm \cap \p \Omega_1^0) } 
		\left(\vbu-\bu\right) \langle x-D\bu(x), \vec{n}(x)\rangle dS(x) \\
	\label{term:D2}	= & 2 \int_{\Omega_{1}^{-}}  \left(\vbu-\bu\right)(\Delta \bu -3) dx \\
	\label{term:D3}		&+ 2 \int_{\p \Omega_1^{-}\cap \p X}  \left(\vbu-\bu\right) \langle x-D\bu(x), \vec{n}(x)\rangle dS(x) 
	\end{flalign}
	
	For any $\theta \in [-\frac{\pi}{4}, \bar\theta]$ and $r\in [0, R(\theta)]$, define $\vlegr(r, \theta) \in [0, \infty)$ and $\veth(r, \theta)\in [-\frac{\pi}{4}, \bar\theta] $ such that $\vbx(\vlegr (r, \theta), \veth(r, \theta)) := \bx(r, \theta)$. Note that $\vlegr$ and $\veth$ are well defined since $(\vbzt)' >0$.
	
	From the definition of $\vlegr$, $\veth$ and $\vbzt$, 
	\begin{flalign*}
		\begin{cases}
			\vlegr(r, \theta) \cos \veth(r, \theta) = r\cos \theta; \\
			\vbzt(\veth(r, \theta)) + \vlegr(r, \theta) \sin \veth(r, \theta) = h(\theta) + r\sin\theta
		\end{cases}
	\end{flalign*}
	while for $|\varepsilon| \ll 1$:
	\begin{flalign*}
		\begin{cases}
			\vlegr(r, \theta) = r - \varepsilon \frac{r \tzt(\theta) \sin \theta}{h'(\theta) \cos\theta +r} + o(\varepsilon),\\
			\veth(r, \theta) = \theta - \varepsilon \frac{\tzt(\theta) \cos \theta}{h'(\theta) \cos\theta +r} + o(\varepsilon).
		\end{cases}
	\end{flalign*}
	
	Taylor expanding yields
	\begin{flalign*}
		& \vbu(\bx(r, \theta)) - \bu(\bx(r, \theta)) \\
		= & \vbu(\vbx(\vlegr(r, \theta), \veth(r, \theta)))
		- \bu(\bx(r, \theta)) \\
		= & \vbuz(\veth(r, \theta)) - b(\theta)  + \vlegr(r, \theta) m(\veth(r, \theta) )- r m(\theta)\\
		= & \vbuz(\theta) - b(\theta) - \varepsilon \frac{\tzt(\theta)}{h'(\theta) \cos\theta +r} \left( (\vbuz)'(\theta)  \cos\theta+ m'(\theta)  r\cos\theta+ m(\theta)  r\sin\theta\right) + o (\varepsilon)\\
		= &   \int_{-\frac{\pi}{4}}^{\theta} (\vbuz)'(\vartheta) - (b)'(\vartheta) d \vartheta 
		- \varepsilon \frac{\tzt(\theta)}{h'(\theta)}(b)'(\theta)  + o(\varepsilon)\\
		= &   \varepsilon \int_{-\frac{\pi}{4}}^{\theta} \tzt'(\vartheta) [ m(\vartheta) \sin \vartheta+  m'(\vartheta)\cos \vartheta] d\vartheta - \varepsilon \frac{\tzt(\theta)}{h'(\theta)}(b)'(\theta)  + o(\varepsilon).
	\end{flalign*}
	
	For every $\theta \in [-\frac{\pi}{4}, \bar\theta]$, denote 
	$\Box_{0}(\theta) := m(\theta) + m''(\theta) - 3 h'(\theta) \cos\theta $, 
	$\Box_{1}(\theta) := \int_{-\frac{\pi}{4}}^{\theta} \tzt'(\vartheta) [\sin \vartheta m(\vartheta) + \cos \vartheta m'(\vartheta)] d\vartheta$, 
	and $\Box_{2}(\theta) := \cos \theta m(\theta) - \sin\theta m'(\theta) -a$.
	Therefore,
	\begin{flalign*}
	\frac{1}{2}\eqref{term:D2} = & \int_{\Omega_{1}^{-}}  \left(\vbu-\bu\right)(\Delta \bu -3) dx \\
		= & \int_{-\frac{\pi}{4}}^{\bar \theta} \int_{0}^{R(\theta)}  
		(\vbu(\bx(r, \theta)) - \bu(\bx(r, \theta))) 
		\left(\frac{m(\theta) + m''(\theta)}{h'(\theta) \cos\theta + r} -3 \right)
		(h'(\theta) \cos\theta + r) dr d\theta\\
		= & \int_{-\frac{\pi}{4}}^{\bar \theta} \int_{0}^{R(\theta)} 
		\bigg(\varepsilon \Box_{1}(\theta) - \varepsilon \frac{\tzt(\theta)}{h'(\theta)}(b)'(\theta)  + o(\varepsilon) \bigg)  
		\left( \Box_{0}(\theta) -3r \right) dr d\theta\\
		= & \varepsilon  \int_{-\frac{\pi}{4}}^{\bar \theta} 
		\bigg[ \Box_{1}(\theta) -  \frac{\tzt(\theta)(b)'(\theta)}{h'(\theta)} \bigg] 
		\left( \Box_{0}(\theta)R(\theta) -\frac{3}{2}R^2(\theta) \right) 
		d\theta  
		+ o(\varepsilon),
	\end{flalign*}
	and
	\begin{flalign*}
		\frac{1}{2}\eqref{term:D3} =	& \int_{
			\p \Omega_1^-\cap \p X}  \left(\vbu-\bu\right) \langle x-D\bu(x), \vec{n}(x)\rangle dS(x) \\
		= & \int_{-\frac{\pi}{4}}^{\bar \theta} \left(\vbu(\bx(0, \theta))-\bu(\bx(0, \theta))\right) \left( \frac{\partial \bu}{\partial x_1}(\bx(0, \theta)) -a \right) h'(\theta) d \theta \\
		= & \int_{-\frac{\pi}{4}}^{\bar \theta} 
		\bigg(\varepsilon \Box_{1}(\theta) - \varepsilon \frac{\tzt(\theta)}{h'(\theta)}(b)'(\theta)  + o(\varepsilon) \bigg)  
		\left( m(\theta) \cos \theta- m'(\theta) \sin \theta -a \right) h'(\theta) d \theta \\
		= & \varepsilon \int_{-\frac{\pi}{4}}^{\bar \theta} 
		\left( \Box_{1}(\theta) h'(\theta) -  \tzt(\theta)(b)'(\theta)  \right) 
		\Box_{2}(\theta)  d \theta
		+ o(\varepsilon).
	\end{flalign*}
	
	Note that $ \int_{\Omega_{1}^{\pm}\cup \Omega_{2}}  |D\left(\vbu(x)-\bu(x)\right)|^2 dx = o(\varepsilon)$. Since $\bu$ is optimal, $0 = \lim\limits_{\varepsilon \rightarrow 0 } \frac{\Phi[\vbu]- \Phi[\bu]}{\varepsilon}$ implies that 
	\begin{flalign*}
		0 =  & \int_{-\frac{\pi}{4}}^{\bar \theta} 
		\bigg[ \Box_{1}(\theta) -  \frac{\tzt(\theta)(b)'(\theta)}{h'(\theta)} \bigg] 
		\left( \Box_{0}(\theta)R(\theta) -\frac{3}{2}R^2(\theta) \right) 
		d\theta  	\\
		& + \int_{-\frac{\pi}{4}}^{\bar \theta} \left( \Box_{1}(\theta) h'(\theta) -  \tzt(\theta)(b)'(\theta)  \right) \Box_{2}(\theta)  d \theta\\
		= & \int_{-\frac{\pi}{4}}^{\bar \theta} \bigg\{  \Box_{1}(\theta) \left( \Box_{0}(\theta) R(\theta) - \frac{3R^2(\theta)}{2} + \Box_{2}(\theta)h'(\theta) \right) \\
		& \hspace{1cm} - \frac{\tzt(\theta)b'(\theta)}{h'(\theta)}  \left(  \Box_{0}(\theta)R(\theta) - \frac{3R^2(\theta)}{2} + \Box_{2}(\theta) h'(\theta) \right) 
		\bigg\} d \theta
	\end{flalign*}
	holds for any $\tzt: [-\frac{\pi}{4}, \bar\theta] \rightarrow \R$ such that $\tzt(-\frac{\pi}{4}) =\tzt(\bar \theta) =0$.
	
	Recall that $\Box_{1}(\theta) = \int_{-\frac{\pi}{4}}^{\theta}  \tzt'(\vartheta) [\sin \vartheta m(\vartheta) + \cos \vartheta m'(\vartheta)] d\vartheta {
		= \int_{-\frac{\pi}{4}}^{\theta}  \tzt'(\vartheta) \frac{b'(\vartheta)}{h'(\vartheta)} d\vartheta }$ and $\alpha(\theta) =   \Box_{0}(\theta)R(\theta) - \frac{3R^2(\theta)}{2} + \Box_{2}(\theta) h'(\theta)$. 
	Therefore, Fubini's theorem tells
	\begin{flalign*}
		0 
		=  -& \int_{-\frac{\pi}{4}}^{\bar \theta}  
		\left[\int_{-\frac{\pi}{4}}^{\bar \theta} \frac{\tzt(\theta)b'(\theta)}{h'(\theta)} 			
		-\int_{-\frac{\pi}{4}}^{\theta}  \tzt'(\vartheta) \frac{b'(\vartheta)}{h'(\vartheta)} d\vartheta\right] \alpha(\theta) d \theta
		\\= \phantom{-} & \int_{-\frac{\pi}{4}}^{\bar \theta}   \tzt'(\theta) \frac{b'(\theta)}{h'(\theta)} \int_{\theta}^{\bar \theta} \alpha(\vartheta) d\vartheta  d \theta
		- \int_{-\frac{\pi}{4}}^{\bar \theta} \frac{\tzt(\theta)b'(\theta)}{h'(\theta)}  \alpha(\theta)
		d \theta
		\\
		= - & \int_{-\frac{\pi}{4}}^{\bar \theta}   \tzt(\theta) 
		\bigg\{\frac{h'(\theta)b''(\theta) - b'(\theta)h''(\theta)}{(h'(\theta))^2} \int_{\theta}^{\bar \theta} \alpha(\vartheta) d\vartheta 
		- \frac{b'(\theta)}{h'(\theta)} \alpha(\theta) 
		\bigg\} d \theta 
		- \int_{-\frac{\pi}{4}}^{\bar \theta} \frac{\tzt(\theta)b'(\theta)}{h'(\theta)}  \alpha(\theta)
		d \theta
		\\
		= - &  \int_{-\frac{\pi}{4}}^{\bar \theta}   \tzt(\theta) 
		\bigg\{\cos\theta [ m(\theta) + m''(\theta) ] \int_{\theta}^{\bar \theta} \alpha(\vartheta) d\vartheta 
		\bigg\} d \theta 
	\end{flalign*}
	holds for any $\tzt \in C^2((-\frac{\pi}{4}, \bar\theta))$ of compact support, due to the locally uniform bound (b) $h'(\theta) >0$. Then the fundamental lemma of the calculus of variations tells
	\begin{flalign}
		\cos\theta [ m(\theta) + m''(\theta) ] \int_{\theta}^{\bar \theta} \alpha(\vartheta) d\vartheta = 0, \quad \forall \theta \in \left(-\frac{\pi}{4}, \bar\theta \right).
	\end{flalign}
	Recalling \eqref{Hessian}, the locally uniform rank $1$ property (c) of $D^2 \bu$ on $\Omega_{1}^{-}$ implies that $m(\theta) + m''(\theta) \ne 0$ for all $\theta \in \left(-\frac{\pi}{4}, \bar\theta \right)$.
	Since $\cos \theta >0 $ for all $\theta \in \left(-\frac{\pi}{4}, \bar\theta \right)$, one has
	\begin{flalign*}
		\int_{\theta}^{\bar \theta} \alpha(\vartheta) d\vartheta = 0, \quad \forall \theta \in \left(-\frac{\pi}{4}, \bar\theta \right).
	\end{flalign*}
	Thus,
	\begin{flalign*}
		\alpha(\theta) = 0, \quad \forall \theta \in \left(-\frac{\pi}{4}, \bar\theta \right).	
	\end{flalign*}
\end{proof}

\medskip

\section{On nonsmooth convex ruled surfaces. By Cale Rankin}
\label{section:Rankin}

In this appendix we confirm that the graph of the optimizing indirect utility is a ruled surface in the region where $\det (D^2u) = 0$ holds. For smooth convex $u$ this is a classical fact, known to geometers such as Monge since the eighteenth century: if $u \in C^2(X)$ one obtains the rulings (segments) along which $u$ is affine
by integrating the continuous vector field given by zero-eigenvalue eigenvectors of the Hessian $D^2 u$.   Our present goal is to extend this result to the merely $C^{1,1}_{loc}$ regularity \eqref{regularity} guaranteed for the optimal indirect utility $\bar u$ by the work of \cite{CaffarelliLions06+}, also described in \cite{mccann2023c}.  For this we recall the definition of the Monge-Amp\`ere measure for $C^{1}$ convex functions. The Monge-Amp\`ere measure, denoted $\mu_u$, of such a function is the measure defined for $E \subset \R^n$ by
\[ \mu_u(E) := \mathop{\rm vol}[\p u(E)] = \mathop{\rm vol}[Du(E)].\]
For a proof that this quantity is a measure and other basic properties see the book of \cite{Figalli17}. The Monge-Amp\`ere measure extends the measure $\det D^2u \ dx$ to functions which are not twice differentiable and we employ it in conjunction with the Aleksandrov maximum principle, stated here as in Theorem 2.8 of \cite{Figalli17}.

\begin{theorem}[The Monge-Amp\`ere measure controls deviation from linearity]
\label{T:Aleksandrov maximum principle}
  Let $X \subset \R^n$ be an arbitrary convex set and $u:X\rightarrow \mathbf{R}$ a convex function. Assume the restriction of $u$ to $\partial X$ is affine, that is there is $l(x) = p \cdot x + a $ such that $u \equiv l$ on $\partial X$. Then there is a constant $C$ depending only on the dimension such that for all $x \in X$ the following estimate holds 
\[|u(x) - l(x)|^n \leq C \text{diam}(X)^{n-1}\text{dist}(x,\partial X) \mu_u(X). \]
\end{theorem}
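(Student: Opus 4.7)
The plan is to estimate $\mu_u(X) = \mathrm{vol}(\p u(X))$ from below using the subdifferential at $x_0$ of an appropriate ``tent function'' that matches the boundary behaviour of $u$. By subtracting $l$, we may assume $l \equiv 0$, so $u \le 0$ on $X$, $u \equiv 0$ on $\p X$, and $u(x_0) = -h$ with $h = |u(x_0)-l(x_0)|$; write $d = \mathrm{dist}(x_0, \p X)$ and $D = \mathrm{diam}(X)$. Let $v: X \to \R$ be the largest convex function on $X$ satisfying $v(x_0) = -h$ and $v \equiv 0$ on $\p X$; explicitly, $-v(x)/h \in [0,1]$ is the largest $\alpha$ for which $x$ admits a decomposition $x = \alpha x_0 + (1-\alpha) y$ with $y \in X$.

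The first step is to establish the inclusion $\p v(x_0) \subset \p u(X)$. Given $p \in \p v(x_0)$, the affine function $\ell(x) := -h + p \cdot (x - x_0)$ lies below $v$ throughout $X$; in particular $\ell \le 0 = u$ on $\p X$ while $\ell(x_0) = u(x_0)$. Consequently the continuous function $u - \ell$ is nonnegative on $\p X$ and vanishes at $x_0$, so its minimum over the compact set $\overline X$ is attained at some interior point $y \in \mathrm{int}\, X$; at $y$ one reads off $p \in \p u(y) \subset \p u(X)$.

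The second step is a purely geometric volume bound for $\p v(x_0)$. Testing the subdifferential inequality $v(x) \ge -h + p \cdot (x-x_0)$ at boundary points (where $v = 0$) and extending to all of $X$ by convexity shows $\p v(x_0) = \{p \in \R^n : p \cdot (x - x_0) \le h \text{ for all } x \in X\} = h \cdot (X-x_0)^\circ$, where $(X - x_0)^\circ$ denotes the polar body. Let $y_0 \in \p X$ minimize $|y - x_0|$ and set $\vec e = (x_0 - y_0)/d$; the minimizing property of $y_0$ forces $X \subset \{y : (y - x_0) \cdot \vec e \ge -d\}$, and trivially $X \subset \overline{B(x_0, D)}$. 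These two containments fit a coordinate rectangle of side $\asymp 1/d$ along $\vec e$ and $\asymp 1/D$ in the $n-1$ orthogonal directions inside $(X - x_0)^\circ$, so $\mathrm{vol}(\p v(x_0)) \ge c_n h^n/(d \cdot D^{n-1})$ for some dimensional constant $c_n > 0$.

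Chaining $\mu_u(X) \ge \mathrm{vol}(\p u(X)) \ge \mathrm{vol}(\p v(x_0))$ with this geometric lower bound yields the claim with $C = 1/c_n$. The main delicate point I anticipate is ensuring that the minimum of $u - \ell$ in the first step actually lies in the interior of $X$: this follows from the sign comparison ($u - \ell \le 0$ at $x_0$ versus $\ge 0$ on $\p X$) combined with continuity of $u$ up to $\p X$, which is automatic since $u$ has affine boundary values on the convex body $X$. A secondary point is verifying that the rectangle claimed in the second step genuinely sits inside $(X - x_0)^\circ$ rather than merely having the right asymptotic dimensions; this reduces to a short coordinate-by-coordinate check on $\sup_{y \in X - x_0} p \cdot y$ using the two containments recorded above.
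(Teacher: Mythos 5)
Your proof is correct and follows the standard textbook argument for the Aleksandrov maximum principle — the one in Figalli's monograph, which the paper cites rather than proves. Normalize $l\equiv 0$, build the cone $v$ with apex $(x_0,-h)$ and base $\partial X\times\{0\}$, slide its supporting planes up to show $\partial v(x_0)\subset\partial u(X)$, and then lower-bound $|\partial v(x_0)| = h^n\,|(X-x_0)^\circ|$ by a geometric inclusion. All of this is right. The one place you deviate, cosmetically, from the usual presentation is the geometric step: the standard argument inscribes in $(X-x_0)^\circ$ the convex hull of the $(n-1)$-disk $\{p\cdot\vec e=0,\;|p|\le 1/D\}$ and the apex $-\vec e/d$, yielding a cone of volume $\asymp 1/(dD^{n-1})$, whereas you inscribe a coordinate box; both produce the same dimensional constant up to normalization, so there is nothing gained or lost.

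Two small points worth tightening. First, "continuity of $u$ up to $\partial X$" is not automatic for a bare convex function; what your sliding argument actually needs is that the minimum of $u-\ell$ is attained on the compact set $\overline X$ and that this minimum is nonpositive while the boundary values of $u-\ell$ are nonnegative. Lower semicontinuity of a convex function (which is free) gives attainment, and the inequality $u\le l$ throughout $X$ — which follows from convexity plus $u=l$ on $\partial X$ — gives the sign comparison; that in fact implies the continuity you wanted, but it is cleaner to invoke LSC plus the pointwise bound directly. Second, when fitting the box, the containment $X-x_0\subset\{z\cdot\vec e\ge -d\}$ is one-sided, so the box must be taken on one side of the hyperplane $\{p\cdot\vec e=0\}$ (e.g.\ $p\cdot\vec e\in[-1/(2d),0]$); this is consistent with your stated side length $\asymp 1/d$, but worth flagging so a reader does not try to symmetrize.
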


This theorem simplifies considerably when $u$ is $C^{1,1}_{\text{loc}}$. Indeed, the Lipschitz continuity of the first derivatives implies almost everywhere second differentiability. Subsequently the Monge-Amp\`ere measure is absolutely continuous with respect to Lebesgue and given by $\mu_u = \det D^2u(x) \ dx $ where we may set $D^2u(x) = 0$ at points where $u$ is not twice differentiable \cite[Lemma 2.3]{TrudingerWang08}. Now we extend the classical result on ruled surfaces as follows.

\begin{lemma}[Ruled surface]\label{L:ruled surfaces}
  Let $u \in C^{1,1}_{\text{loc}}(\Omega)$ be the restriction of a convex function $u:\mathbf{R}^2 \rightarrow \mathbf{R} \cup \{+\infty\}$ which, throughout a bounded open set $\Omega \subset \mathbf{R}^2$, satisfies $\det D^2u = 0 < \Delta u$ at points of second differentiability. If $y \in Du(\Omega)$, each connected component of $Du^{-1}(y) \cap \Omega$ is a line segment with endpoints on $\partial\Omega$.
\end{lemma}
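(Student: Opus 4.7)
The plan is to translate the problem into one about the minimizer set of a convex function with vanishing Monge--Amp\`ere measure, then combine a dimension argument with the Aleksandrov maximum principle (Theorem \ref{T:Aleksandrov maximum principle}) to exclude interior endpoints. Set $v(x) := u(x) - y\cdot x$, so $v$ is convex on $\R^2$, inherits $C^{1,1}_{\text{loc}}(\Omega)$ regularity, and satisfies $Du(x)=y$ exactly when $Dv(x)=0$, equivalently when $x$ globally minimizes $v$. Let $m := \inf v$ and $E := \{v = m\}$; then $E = (Du)^{-1}(y)$ is convex (the minimizer set of a convex function), and $\mu_v = \mu_u$ on $\Omega$ since translation by an affine function preserves the Monge--Amp\`ere measure. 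The $C^{1,1}_{\text{loc}}$ regularity identifies $\mu_u$ with $\det D^2 u\, dx$ on $\Omega$, which vanishes by hypothesis. Moreover, $E \cap \Omega$ cannot contain an open disc, since $v$ would then be constant on that disc, forcing $\Delta u = 0$ on a set of positive measure and contradicting $\Delta u > 0$ a.e. Combined with convexity of $E$, this shows each connected component $C$ of $E \cap \Omega$ is a (possibly degenerate) $1$-dimensional convex set, hence an interval on some line.

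To complete the proof I would rule out $C$ having an endpoint in the interior of $\Omega$. Suppose for contradiction $p \in C$ is an extreme point with $p \in \Omega$. The extreme-point property and the convexity of $E$ supply $r > 0$ with $\overline{B_r(p)} \subset \Omega$, together with a unit vector $\nu$, such that $E \cap B_r(p) \subset \{x : \nu \cdot (x - p) \geq 0\}$ with equality only at $p$. In particular $v > m$ on $\overline{B_r(p)} \cap \{\nu\cdot(x-p) \leq 0\}\setminus\{p\}$, and by compactness there exists $\eta > 0$ with $v \geq m + \eta$ on $\partial B_r(p) \cap \{\nu\cdot(x-p) \leq 0\}$.

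The tilt trick: for small $\varepsilon > 0$, set
\begin{equation*}
\ell_\varepsilon(x) := m - \varepsilon\nu\cdot(x-p),\qquad K_\varepsilon := \{x \in B_r(p) : v(x) < \ell_\varepsilon(x)\},
\end{equation*}
a convex open subset of $B_r(p)$. The $C^{1,1}_{\text{loc}}$ bound $0 \leq v(x) - m \leq C|x-p|^2$ near $p$ gives $v(p - s\nu) - \ell_\varepsilon(p - s\nu) \leq Cs^2 - \varepsilon s < 0$ for small $s > 0$, so $K_\varepsilon \neq \emptyset$. On $\partial B_r(p) \cap \{\nu\cdot(x-p) \geq 0\}$ one has $v \geq m \geq \ell_\varepsilon$, while on the complementary half $v - \ell_\varepsilon \geq \eta - \varepsilon r > 0$ as soon as $\varepsilon < \eta/r$; together these show $\overline{K_\varepsilon} \subset B_r(p) \subset \Omega$ and, by continuity, $v \equiv \ell_\varepsilon$ on $\partial K_\varepsilon$. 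Applying Theorem \ref{T:Aleksandrov maximum principle} to $v$ on $K_\varepsilon$ with $\mu_v(K_\varepsilon) = 0$ forces $v \equiv \ell_\varepsilon$ on all of $K_\varepsilon$, directly contradicting the strict inequality built into the definition of $K_\varepsilon$. The most delicate point is the simultaneous verification that $K_\varepsilon$ is nonempty, compactly contained in $\Omega$, and carries zero Monge--Amp\`ere mass; the $C^{1,1}_{\text{loc}}$ regularity is essential, as it provides the quadratic upper bound on $v - m$ needed to beat the linear tilt on the excluded side of $\nu$.
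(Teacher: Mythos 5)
Your proof is correct, and it rests on exactly the same two pillars as the paper's argument: the vanishing of the Monge--Amp\`ere measure (via the $C^{1,1}_{loc}$ identification $\mu_u = \det D^2 u\, dx = 0$) together with the Aleksandrov maximum principle of Theorem~\ref{T:Aleksandrov maximum principle} applied to a sublevel set of $v$ minus a tilted affine function. The route differs in its organization: the paper splits the conclusion into two stages --- first showing (with the untilted set $S_\epsilon = \{u \le l + \epsilon\}$) that the contact set reaches $\partial\Omega$ at all, and then (with a tilted $S'_\epsilon$) that both endpoints lie on $\partial\Omega$ --- whereas you observe that tilting at an arbitrary extreme point $p$ of the contact set lying in $\Omega$ already yields a contradiction, so a single application of the tilt handles both endpoints at once (including the degenerate case where the contact set is a single point). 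A further small gain is that you localize to a ball $\overline{B_r(p)}\subset\Omega$ and obtain the compact containment $\overline{K_\varepsilon}\subset B_r(p)$ quantitatively, via the $\eta$-bound from below on $\partial B_r(p)\cap\{\nu\cdot(x-p)\le 0\}$ together with the $C^{1,1}$ quadratic bound used to show $K_\varepsilon\neq\emptyset$; the paper asserts the analogous compact containment of $S_\epsilon$ and $S'_\epsilon$ more briefly. Both arguments are sound; yours is slightly more uniform and more explicit on the containment step, while the paper's stage-one argument is a bit simpler for the purely interior case since it avoids the tilt there.
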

\begin{proof}
Take $y_0 = Du(x_0)$ for some $x_0 \in \Omega$. Let $l$ denote the corresponding support $l(x):= u(x_0) + y_0\cdot(x-x_0)$. We claim the intersection of the convex set $S_0 := \{u \equiv l\} = Du^{-1}(y_0)$ with $\Omega$ consists of line segments with endpoints on $\partial\Omega$.
  
  First note $x_0 \in S_0 \cap \Omega$.  Now $\Delta u > 0$ implies $u$ is not affine on any open set and thus $S_0$ is at most one dimensional. Next, 
  $\det D^2u = 0$ a.e. on $\Omega$ combines with $u \in C^{1,1}_{loc}(\Omega)$ to imply the Monge-Amp\`ere measure of $u$ vanishes on $\Omega$, that is $\mu_u (\Omega)= 0$.  For a contradiction suppose a connected component of $S_0 \cap \Omega$  does not intersect the boundary. To avoid a contradiction, the convex set
  \[ S_\epsilon := \{ x\in\R^n : u(x) \le l(x) +\epsilon \}\]
which contains $x_0 \in S_0\cap \Omega$ 
   must be strictly contained in $\Omega$ for $\epsilon > 0$ sufficiently small. The Aleksandrov maximum principle 
  implies the contradiction
  via  Theorem \ref{T:Aleksandrov maximum principle}.
  
Thus $Du^{-1}(y)$ is a line segment with an endpoint on $\partial\Omega$. To see both endpoints lie on $\partial\Omega$, suppose otherwise. Without loss of generality $x_0 = 0$ and $Du^{-1}(y) = \{te_1; a \leq t \leq b\}$ with $b>0$ and $be_1 \in \text{int}(\Omega)$ . Then, by tilting the support, we see for $\epsilon>0$ sufficiently small the convex set 
  \[ S'_\epsilon = \{x \in \R^n : u(x) \leq l(x) + \epsilon x_1\},\]
  again contains $x_0$ and
is compactly contained in $\Omega$. This yields the same contradiction to the Aleksandrov maximum principle as above. Thus $Du^{-1}(y)$ is a line segment with both endpoints on $\partial\Omega$. 
\end{proof}

\bibliographystyle{apacite}
\bibliography{newbib230119.bib}

\begin{thebibliography}{}

\bibitem [\protect \citeauthoryear {%
Armstrong%
}{%
Armstrong%
}{%
{\protect \APACyear {1996}}%
}]{%
Armstrong96}
\APACinsertmetastar {%
Armstrong96}%
\begin{APACrefauthors}%
Armstrong, M.%
\end{APACrefauthors}%
\unskip\
\newblock
\APACrefYearMonthDay{1996}{}{}.
\newblock
{\BBOQ}\APACrefatitle {Multiproduct nonlinear pricing} {Multiproduct nonlinear pricing}.{\BBCQ}
\newblock
\APACjournalVolNumPages{Econometrica}{64}{}{51--75}.
\PrintBackRefs{\CurrentBib}

\bibitem [\protect \citeauthoryear {%
Basov%
}{%
Basov%
}{%
{\protect \APACyear {2005}}%
}]{%
Basov05}
\APACinsertmetastar {%
Basov05}%
\begin{APACrefauthors}%
Basov, S.%
\end{APACrefauthors}%
\unskip\
\newblock
\APACrefYear{2005}.
\newblock
\APACrefbtitle {Multidimensional Screening} {Multidimensional screening}.
\newblock
\APACaddressPublisher{Berlin}{Springer-Verlag}.
\PrintBackRefs{\CurrentBib}

\bibitem [\protect \citeauthoryear {%
Boerma%
, Tsyvinski%
\BCBL {}\ \BBA {} Zimin%
}{%
Boerma%
\ \protect \BOthers {.}}{%
{\protect \APACyear {2022+}}%
}]{%
BoermaTsyvinskiZimin22+}
\APACinsertmetastar {%
BoermaTsyvinskiZimin22+}%
\begin{APACrefauthors}%
Boerma, J.%
, Tsyvinski, A.%
\BCBL {}\ \BBA {} Zimin, A\BPBI P.%
\end{APACrefauthors}%
\unskip\
\newblock
\APACrefYearMonthDay{2022+}{}{}.
\newblock
{\BBOQ}\APACrefatitle {Bunching and taxing multidimensional skills} {Bunching and taxing multidimensional skills}.{\BBCQ}
\newblock
\APACjournalVolNumPages{{\rm Preprint at} arXiv:2204.13481}{}{}{}.
\PrintBackRefs{\CurrentBib}

\bibitem [\protect \citeauthoryear {%
Borwein%
\ \BBA {} Zhu%
}{%
Borwein%
\ \BBA {} Zhu%
}{%
{\protect \APACyear {2004}}%
}]{%
BorweinZhu04}
\APACinsertmetastar {%
BorweinZhu04}%
\begin{APACrefauthors}%
Borwein, J\BPBI M.%
\BCBT {}\ \BBA {} Zhu, Q\BPBI J.%
\end{APACrefauthors}%
\unskip\
\newblock
\APACrefYear{2004}.
\newblock
\APACrefbtitle {Techniques of Variational Analysis} {Techniques of variational analysis}.
\newblock
\APACaddressPublisher{}{Springer}.
\PrintBackRefs{\CurrentBib}

\bibitem [\protect \citeauthoryear {%
Caffarelli%
\ \BBA {} Lions%
}{%
Caffarelli%
\ \BBA {} Lions%
}{%
{\protect \APACyear {2006+}}%
}]{%
CaffarelliLions06+}
\APACinsertmetastar {%
CaffarelliLions06+}%
\begin{APACrefauthors}%
Caffarelli, L.%
\BCBT {}\ \BBA {} Lions, P\BHBI L.%
\end{APACrefauthors}%
\unskip\
\newblock
\APACrefYearMonthDay{2006+}{}{}.
\newblock
{\BBOQ}\APACrefatitle {Untitled notes} {Untitled notes}.{\BBCQ}
\newblock

\PrintBackRefs{\CurrentBib}

\bibitem [\protect \citeauthoryear {%
Carlier%
}{%
Carlier%
}{%
{\protect \APACyear {2001}}%
}]{%
Carlier01}
\APACinsertmetastar {%
Carlier01}%
\begin{APACrefauthors}%
Carlier, G.%
\end{APACrefauthors}%
\unskip\
\newblock
\APACrefYearMonthDay{2001}{}{}.
\newblock
{\BBOQ}\APACrefatitle {A general existence result for the principal-agent problem with adverse selection} {A general existence result for the principal-agent problem with adverse selection}.{\BBCQ}
\newblock
\APACjournalVolNumPages{J. Math. Econom.}{35}{}{129--150}.
\PrintBackRefs{\CurrentBib}

\bibitem [\protect \citeauthoryear {%
Carlier%
}{%
Carlier%
}{%
{\protect \APACyear {2002}}%
}]{%
Carlier02}
\APACinsertmetastar {%
Carlier02}%
\begin{APACrefauthors}%
Carlier, G.%
\end{APACrefauthors}%
\unskip\
\newblock
\APACrefYearMonthDay{2002}{}{}.
\newblock
{\BBOQ}\APACrefatitle {Calculus of variations with convexity constraints} {Calculus of variations with convexity constraints}.{\BBCQ}
\newblock
\APACjournalVolNumPages{J. Nonlinear Convex Anal.}{3}{}{125--143}.
\PrintBackRefs{\CurrentBib}

\bibitem [\protect \citeauthoryear {%
Carlier%
\ \BBA {} Dupuis%
}{%
Carlier%
\ \BBA {} Dupuis%
}{%
{\protect \APACyear {2017}}%
}]{%
CarlierDupuis17}
\APACinsertmetastar {%
CarlierDupuis17}%
\begin{APACrefauthors}%
Carlier, G.%
\BCBT {}\ \BBA {} Dupuis, X.%
\end{APACrefauthors}%
\unskip\
\newblock
\APACrefYearMonthDay{2017}{}{}.
\newblock
{\BBOQ}\APACrefatitle {An iterated projection approach to variational problems under generalized convexity constraints} {An iterated projection approach to variational problems under generalized convexity constraints}.{\BBCQ}
\newblock
\APACjournalVolNumPages{Applied Mathematics \& Optimization}{76}{3}{565--592}.
\PrintBackRefs{\CurrentBib}

\bibitem [\protect \citeauthoryear {%
Carlier%
\ \BBA {} Lachand-Robert%
}{%
Carlier%
\ \BBA {} Lachand-Robert%
}{%
{\protect \APACyear {2001}}%
}]{%
CarlierLachandRobert01}
\APACinsertmetastar {%
CarlierLachandRobert01}%
\begin{APACrefauthors}%
Carlier, G.%
\BCBT {}\ \BBA {} Lachand-Robert, T.%
\end{APACrefauthors}%
\unskip\
\newblock
\APACrefYearMonthDay{2001}{}{}.
\newblock
{\BBOQ}\APACrefatitle {Regularity of solutions for some variational problems subject to a convexity constraint} {Regularity of solutions for some variational problems subject to a convexity constraint}.{\BBCQ}
\newblock
\APACjournalVolNumPages{Communications on Pure and Applied Mathematics: A Journal Issued by the Courant Institute of Mathematical Sciences}{54}{5}{583--594}.
\PrintBackRefs{\CurrentBib}

\bibitem [\protect \citeauthoryear {%
Carlier%
, Lachand-Robert%
\BCBL {}\ \BBA {} Maury%
}{%
Carlier%
\ \protect \BOthers {.}}{%
{\protect \APACyear {2001}}%
}]{%
CarlierLachand-RobertMaury01}
\APACinsertmetastar {%
CarlierLachand-RobertMaury01}%
\begin{APACrefauthors}%
Carlier, G.%
, Lachand-Robert, T.%
\BCBL {}\ \BBA {} Maury, B.%
\end{APACrefauthors}%
\unskip\
\newblock
\APACrefYearMonthDay{2001}{}{}.
\newblock
{\BBOQ}\APACrefatitle {A numerical approach to variational problems subject to convexity constraint} {A numerical approach to variational problems subject to convexity constraint}.{\BBCQ}
\newblock
\APACjournalVolNumPages{Numer. Math.}{88}{2}{299--318}.
\newblock
\begin{APACrefURL} \url{https://doi-org.myaccess.library.utoronto.ca/10.1007/PL00005446} \end{APACrefURL}
\newblock
\begin{APACrefDOI} \doi{10.1007/PL00005446} \end{APACrefDOI}
\PrintBackRefs{\CurrentBib}

\bibitem [\protect \citeauthoryear {%
Chen%
}{%
Chen%
}{%
{\protect \APACyear {2023}}%
}]{%
Chen23}
\APACinsertmetastar {%
Chen23}%
\begin{APACrefauthors}%
Chen, S.%
\end{APACrefauthors}%
\unskip\
\newblock
\APACrefYearMonthDay{2023}{}{}.
\newblock
{\BBOQ}\APACrefatitle {Regularity of the solution to the principal-agent problem} {Regularity of the solution to the principal-agent problem}.{\BBCQ}
\newblock
\BIn{} \APACrefbtitle {Geometric and functional inequalities and recent topics in nonlinear {PDE}s} {Geometric and functional inequalities and recent topics in nonlinear {PDE}s}\ (\BVOL~781, \BPGS\ 41--47).
\newblock
\APACaddressPublisher{}{Amer. Math. Soc., [Providence], RI}.
\newblock
\begin{APACrefDOI} \doi{10.1090/conm/781/15708} \end{APACrefDOI}
\PrintBackRefs{\CurrentBib}

\bibitem [\protect \citeauthoryear {%
Chiappori%
, McCann%
\BCBL {}\ \BBA {} Pass%
}{%
Chiappori%
\ \protect \BOthers {.}}{%
{\protect \APACyear {2017}}%
}]{%
ChiapporiMcCannPass17}
\APACinsertmetastar {%
ChiapporiMcCannPass17}%
\begin{APACrefauthors}%
Chiappori, P\BHBI A.%
, McCann, R\BPBI J.%
\BCBL {}\ \BBA {} Pass, B.%
\end{APACrefauthors}%
\unskip\
\newblock
\APACrefYearMonthDay{2017}{}{}.
\newblock
{\BBOQ}\APACrefatitle {Multi- to one-dimensional optimal transport} {Multi- to one-dimensional optimal transport}.{\BBCQ}
\newblock
\APACjournalVolNumPages{Comm. Pure Appl. Math.}{70}{}{2405--2444}.
\PrintBackRefs{\CurrentBib}

\bibitem [\protect \citeauthoryear {%
Daskalakis%
, Deckelbaum%
\BCBL {}\ \BBA {} Tzamos%
}{%
Daskalakis%
\ \protect \BOthers {.}}{%
{\protect \APACyear {2017}}%
}]{%
DaskalakisDeckelbaumTzamos17}
\APACinsertmetastar {%
DaskalakisDeckelbaumTzamos17}%
\begin{APACrefauthors}%
Daskalakis, C.%
, Deckelbaum, A.%
\BCBL {}\ \BBA {} Tzamos, C.%
\end{APACrefauthors}%
\unskip\
\newblock
\APACrefYearMonthDay{2017}{}{}.
\newblock
{\BBOQ}\APACrefatitle {Strong duality for a multiple-good monopolist} {Strong duality for a multiple-good monopolist}.{\BBCQ}
\newblock
\APACjournalVolNumPages{Econometrica}{85}{3}{735--767}.
\newblock
\begin{APACrefURL} \url{https://doi-org.myaccess.library.utoronto.ca/10.3982/ECTA12618} \end{APACrefURL}
\newblock
\begin{APACrefDOI} \doi{10.3982/ECTA12618} \end{APACrefDOI}
\PrintBackRefs{\CurrentBib}

\bibitem [\protect \citeauthoryear {%
Ekeland%
\ \BBA {} Moreno-Bromberg%
}{%
Ekeland%
\ \BBA {} Moreno-Bromberg%
}{%
{\protect \APACyear {2010}}%
}]{%
EkelandMoreno-Bromberg10}
\APACinsertmetastar {%
EkelandMoreno-Bromberg10}%
\begin{APACrefauthors}%
Ekeland, I.%
\BCBT {}\ \BBA {} Moreno-Bromberg, S.%
\end{APACrefauthors}%
\unskip\
\newblock
\APACrefYearMonthDay{2010}{}{}.
\newblock
{\BBOQ}\APACrefatitle {An algorithm for computing solutions of variational problems with global convexity constraints} {An algorithm for computing solutions of variational problems with global convexity constraints}.{\BBCQ}
\newblock
\APACjournalVolNumPages{Numer. Math.}{115}{1}{45--69}.
\newblock
\begin{APACrefURL} \url{https://doi-org.myaccess.library.utoronto.ca/10.1007/s00211-009-0270-2} \end{APACrefURL}
\newblock
\begin{APACrefDOI} \doi{10.1007/s00211-009-0270-2} \end{APACrefDOI}
\PrintBackRefs{\CurrentBib}

\bibitem [\protect \citeauthoryear {%
Figalli%
}{%
Figalli%
}{%
{\protect \APACyear {2017}}%
}]{%
Figalli17}
\APACinsertmetastar {%
Figalli17}%
\begin{APACrefauthors}%
Figalli, A.%
\end{APACrefauthors}%
\unskip\
\newblock
\APACrefYear{2017}.
\newblock
\APACrefbtitle {The {M}onge-{A}mp\`ere equation and its applications} {The {M}onge-{A}mp\`ere equation and its applications}.
\newblock
\APACaddressPublisher{}{European Mathematical Society (EMS), Z\"{u}rich}.
\newblock
\begin{APACrefURL} \url{https://doi-org.myaccess.library.utoronto.ca/10.4171/170} \end{APACrefURL}
\newblock
\begin{APACrefDOI} \doi{10.4171/170} \end{APACrefDOI}
\PrintBackRefs{\CurrentBib}

\bibitem [\protect \citeauthoryear {%
Figalli%
, Kim%
\BCBL {}\ \BBA {} McCann%
}{%
Figalli%
\ \protect \BOthers {.}}{%
{\protect \APACyear {2011}}%
}]{%
FigalliKimMcCann11}
\APACinsertmetastar {%
FigalliKimMcCann11}%
\begin{APACrefauthors}%
Figalli, A.%
, Kim, Y\BHBI H.%
\BCBL {}\ \BBA {} McCann, R\BPBI J.%
\end{APACrefauthors}%
\unskip\
\newblock
\APACrefYearMonthDay{2011}{}{}.
\newblock
{\BBOQ}\APACrefatitle {{When is multidimensional screening a convex program?}} {{When is multidimensional screening a convex program?}}{\BBCQ}
\newblock
\APACjournalVolNumPages{J. Econom Theory}{146}{}{454--478}.
\PrintBackRefs{\CurrentBib}

\bibitem [\protect \citeauthoryear {%
Giannakopoulos%
\ \BBA {} Koutsoupias%
}{%
Giannakopoulos%
\ \BBA {} Koutsoupias%
}{%
{\protect \APACyear {2018}}%
}]{%
GiannakopoulosKoutsoupias2018}
\APACinsertmetastar {%
GiannakopoulosKoutsoupias2018}%
\begin{APACrefauthors}%
Giannakopoulos, Y.%
\BCBT {}\ \BBA {} Koutsoupias, E.%
\end{APACrefauthors}%
\unskip\
\newblock
\APACrefYearMonthDay{2018}{}{}.
\newblock
{\BBOQ}\APACrefatitle {Duality and optimality of auctions for uniform distributions} {Duality and optimality of auctions for uniform distributions}.{\BBCQ}
\newblock
\APACjournalVolNumPages{SIAM Journal on Computing}{47}{1}{121--165}.
\PrintBackRefs{\CurrentBib}

\bibitem [\protect \citeauthoryear {%
Kleiner%
\ \BBA {} Manelli%
}{%
Kleiner%
\ \BBA {} Manelli%
}{%
{\protect \APACyear {2019}}%
}]{%
KleinerManelli19}
\APACinsertmetastar {%
KleinerManelli19}%
\begin{APACrefauthors}%
Kleiner, A.%
\BCBT {}\ \BBA {} Manelli, A.%
\end{APACrefauthors}%
\unskip\
\newblock
\APACrefYearMonthDay{2019}{}{}.
\newblock
{\BBOQ}\APACrefatitle {Strong duality in monopoly pricing} {Strong duality in monopoly pricing}.{\BBCQ}
\newblock
\APACjournalVolNumPages{Econometrica}{87}{4}{1391--1396}.
\newblock
\begin{APACrefURL} \url{https://doi-org.myaccess.library.utoronto.ca/10.3982/ECTA15735} \end{APACrefURL}
\newblock
\begin{APACrefDOI} \doi{10.3982/ECTA15735} \end{APACrefDOI}
\PrintBackRefs{\CurrentBib}

\bibitem [\protect \citeauthoryear {%
Kolesnikov%
, Sandomirskiy%
, Tsyvinski%
\BCBL {}\ \BBA {} Zimin%
}{%
Kolesnikov%
\ \protect \BOthers {.}}{%
{\protect \APACyear {2022+}}%
}]{%
KolesnikovSandomirskiyTsyvinskiZimin22+}
\APACinsertmetastar {%
KolesnikovSandomirskiyTsyvinskiZimin22+}%
\begin{APACrefauthors}%
Kolesnikov, A\BPBI V.%
, Sandomirskiy, F.%
, Tsyvinski, A.%
\BCBL {}\ \BBA {} Zimin, A\BPBI P.%
\end{APACrefauthors}%
\unskip\
\newblock
\APACrefYearMonthDay{2022+}{}{}.
\newblock
{\BBOQ}\APACrefatitle {Beckmann's approach to multi-item multi-bidder auctions} {Beckmann's approach to multi-item multi-bidder auctions}.{\BBCQ}
\newblock
\APACjournalVolNumPages{{\rm Preprint at} arXiv:2203.06837v2}{}{}{}.
\PrintBackRefs{\CurrentBib}

\bibitem [\protect \citeauthoryear {%
Lieberman%
}{%
Lieberman%
}{%
{\protect \APACyear {2013}}%
}]{%
Lieberman13}
\APACinsertmetastar {%
Lieberman13}%
\begin{APACrefauthors}%
Lieberman, G\BPBI M.%
\end{APACrefauthors}%
\unskip\
\newblock
\APACrefYear{2013}.
\newblock
\APACrefbtitle {Oblique derivative problems for elliptic equations} {Oblique derivative problems for elliptic equations}.
\newblock
\APACaddressPublisher{}{World Scientific Publishing Co. Pte. Ltd., Hackensack, NJ}.
\newblock
\begin{APACrefURL} \url{https://doi-org.myaccess.library.utoronto.ca/10.1142/8679} \end{APACrefURL}
\newblock
\begin{APACrefDOI} \doi{10.1142/8679} \end{APACrefDOI}
\PrintBackRefs{\CurrentBib}

\bibitem [\protect \citeauthoryear {%
McAfee%
\ \BBA {} McMillan%
}{%
McAfee%
\ \BBA {} McMillan%
}{%
{\protect \APACyear {1988}}%
}]{%
McAfeeMcMillan88}
\APACinsertmetastar {%
McAfeeMcMillan88}%
\begin{APACrefauthors}%
McAfee, R.%
\BCBT {}\ \BBA {} McMillan, J.%
\end{APACrefauthors}%
\unskip\
\newblock
\APACrefYearMonthDay{1988}{}{}.
\newblock
{\BBOQ}\APACrefatitle {Multidimensional incentive compatibility and mechanism design} {Multidimensional incentive compatibility and mechanism design}.{\BBCQ}
\newblock
\APACjournalVolNumPages{J. Econom. Theory}{46}{}{335--354}.
\PrintBackRefs{\CurrentBib}

\bibitem [\protect \citeauthoryear {%
McCann%
, Rankin%
\BCBL {}\ \BBA {} Zhang%
}{%
McCann%
\ \protect \BOthers {.}}{%
{\protect \APACyear {2023}}%
}]{%
mccann2023c}
\APACinsertmetastar {%
mccann2023c}%
\begin{APACrefauthors}%
McCann, R\BPBI J.%
, Rankin, C.%
\BCBL {}\ \BBA {} Zhang, K\BPBI S.%
\end{APACrefauthors}%
\unskip\
\newblock
\APACrefYearMonthDay{2023}{}{}.
\newblock
{\BBOQ}\APACrefatitle {${C}^{1,1}$ regularity for principal-agent problems} {${C}^{1,1}$ regularity for principal-agent problems}.{\BBCQ}
\newblock
\APACjournalVolNumPages{arXiv preprint arXiv:2303.04937}{}{}{}.
\PrintBackRefs{\CurrentBib}

\bibitem [\protect \citeauthoryear {%
McCann%
\ \BBA {} Zhang%
}{%
McCann%
\ \BBA {} Zhang%
}{%
{\protect \APACyear {2019}}%
}]{%
McCannZhang19}
\APACinsertmetastar {%
McCannZhang19}%
\begin{APACrefauthors}%
McCann, R\BPBI J.%
\BCBT {}\ \BBA {} Zhang, K\BPBI S.%
\end{APACrefauthors}%
\unskip\
\newblock
\APACrefYearMonthDay{2019}{}{}.
\newblock
{\BBOQ}\APACrefatitle {On concavity of the monopolist's problem facing consumers with nonlinear price preferences} {On concavity of the monopolist's problem facing consumers with nonlinear price preferences}.{\BBCQ}
\newblock
\APACjournalVolNumPages{Comm. Pure Appl. Math.}{72}{7}{1386--1423}.
\newblock
\begin{APACrefURL} \url{https://doi-org.myaccess.library.utoronto.ca/10.1002/cpa.21817} \end{APACrefURL}
\newblock
\begin{APACrefDOI} \doi{10.1002/cpa.21817} \end{APACrefDOI}
\PrintBackRefs{\CurrentBib}

\bibitem [\protect \citeauthoryear {%
McCann%
\ \BBA {} Zhang%
}{%
McCann%
\ \BBA {} Zhang%
}{%
{\protect \APACyear {2023+}}%
}]{%
McCannZhang23c}
\APACinsertmetastar {%
McCannZhang23c}%
\begin{APACrefauthors}%
McCann, R\BPBI J.%
\BCBT {}\ \BBA {} Zhang, K\BPBI S.%
\end{APACrefauthors}%
\unskip\
\newblock
\APACrefYearMonthDay{2023+}{}{}.
\newblock
{\BBOQ}\APACrefatitle {Comment on ``{I}roning, sweeping, and multidimensional screening''} {Comment on ``{I}roning, sweeping, and multidimensional screening''}.{\BBCQ}
\newblock
\APACjournalVolNumPages{Preprint}{}{}{}.
\PrintBackRefs{\CurrentBib}

\bibitem [\protect \citeauthoryear {%
M\'{e}rigot%
\ \BBA {} Oudet%
}{%
M\'{e}rigot%
\ \BBA {} Oudet%
}{%
{\protect \APACyear {2014}}%
}]{%
MerigotOudet14}
\APACinsertmetastar {%
MerigotOudet14}%
\begin{APACrefauthors}%
M\'{e}rigot, Q.%
\BCBT {}\ \BBA {} Oudet, E.%
\end{APACrefauthors}%
\unskip\
\newblock
\APACrefYearMonthDay{2014}{}{}.
\newblock
{\BBOQ}\APACrefatitle {Handling convexity-like constraints in variational problems} {Handling convexity-like constraints in variational problems}.{\BBCQ}
\newblock
\APACjournalVolNumPages{SIAM J. Numer. Anal.}{52}{5}{2466--2487}.
\newblock
\begin{APACrefURL} \url{https://doi-org.myaccess.library.utoronto.ca/10.1137/130938359} \end{APACrefURL}
\newblock
\begin{APACrefDOI} \doi{10.1137/130938359} \end{APACrefDOI}
\PrintBackRefs{\CurrentBib}

\bibitem [\protect \citeauthoryear {%
Mirebeau%
}{%
Mirebeau%
}{%
{\protect \APACyear {2016}}%
}]{%
Mirebeau16}
\APACinsertmetastar {%
Mirebeau16}%
\begin{APACrefauthors}%
Mirebeau, J\BHBI M.%
\end{APACrefauthors}%
\unskip\
\newblock
\APACrefYearMonthDay{2016}{}{}.
\newblock
{\BBOQ}\APACrefatitle {Adaptive, anisotropic and hierarchical cones of discrete convex functions} {Adaptive, anisotropic and hierarchical cones of discrete convex functions}.{\BBCQ}
\newblock
\APACjournalVolNumPages{Numer. Math.}{132}{4}{807--853}.
\newblock
\begin{APACrefURL} \url{https://doi-org.myaccess.library.utoronto.ca/10.1007/s00211-015-0732-7} \end{APACrefURL}
\newblock
\begin{APACrefDOI} \doi{10.1007/s00211-015-0732-7} \end{APACrefDOI}
\PrintBackRefs{\CurrentBib}

\bibitem [\protect \citeauthoryear {%
Mirrlees%
}{%
Mirrlees%
}{%
{\protect \APACyear {1971}}%
}]{%
Mirrlees71}
\APACinsertmetastar {%
Mirrlees71}%
\begin{APACrefauthors}%
Mirrlees, J.%
\end{APACrefauthors}%
\unskip\
\newblock
\APACrefYearMonthDay{1971}{}{}.
\newblock
{\BBOQ}\APACrefatitle {An Exploration in the Theory of Optimum Income Taxation} {An exploration in the theory of optimum income taxation}.{\BBCQ}
\newblock
\APACjournalVolNumPages{Rev. Econom. Stud.}{38}{}{175--208}.
\PrintBackRefs{\CurrentBib}

\bibitem [\protect \citeauthoryear {%
Mussa%
\ \BBA {} Rosen%
}{%
Mussa%
\ \BBA {} Rosen%
}{%
{\protect \APACyear {1978}}%
}]{%
MussaRosen78}
\APACinsertmetastar {%
MussaRosen78}%
\begin{APACrefauthors}%
Mussa, M.%
\BCBT {}\ \BBA {} Rosen, S.%
\end{APACrefauthors}%
\unskip\
\newblock
\APACrefYearMonthDay{1978}{}{}.
\newblock
{\BBOQ}\APACrefatitle {Monopoly and Product Quality} {Monopoly and product quality}.{\BBCQ}
\newblock
\APACjournalVolNumPages{J. Econom. Theory}{18}{}{301--317}.
\PrintBackRefs{\CurrentBib}

\bibitem [\protect \citeauthoryear {%
Myerson%
}{%
Myerson%
}{%
{\protect \APACyear {1981}}%
}]{%
Myerson81}
\APACinsertmetastar {%
Myerson81}%
\begin{APACrefauthors}%
Myerson, R\BPBI B.%
\end{APACrefauthors}%
\unskip\
\newblock
\APACrefYearMonthDay{1981}{}{}.
\newblock
{\BBOQ}\APACrefatitle {Optimal auction design} {Optimal auction design}.{\BBCQ}
\newblock
\APACjournalVolNumPages{Math. Oper. Res.}{6}{1}{58--73}.
\newblock
\begin{APACrefURL} \url{https://doi-org.myaccess.library.utoronto.ca/10.1287/moor.6.1.58} \end{APACrefURL}
\newblock
\begin{APACrefDOI} \doi{10.1287/moor.6.1.58} \end{APACrefDOI}
\PrintBackRefs{\CurrentBib}

\bibitem [\protect \citeauthoryear {%
N\"{o}ldeke%
\ \BBA {} Samuelson%
}{%
N\"{o}ldeke%
\ \BBA {} Samuelson%
}{%
{\protect \APACyear {2018}}%
}]{%
NoldekeSamuelson18}
\APACinsertmetastar {%
NoldekeSamuelson18}%
\begin{APACrefauthors}%
N\"{o}ldeke, G.%
\BCBT {}\ \BBA {} Samuelson, L.%
\end{APACrefauthors}%
\unskip\
\newblock
\APACrefYearMonthDay{2018}{}{}.
\newblock
{\BBOQ}\APACrefatitle {The implementation duality} {The implementation duality}.{\BBCQ}
\newblock
\APACjournalVolNumPages{Econometrica}{86}{4}{1283--1324}.
\newblock
\begin{APACrefURL} \url{https://doi-org.myaccess.library.utoronto.ca/10.3982/ECTA13307} \end{APACrefURL}
\newblock
\begin{APACrefDOI} \doi{10.3982/ECTA13307} \end{APACrefDOI}
\PrintBackRefs{\CurrentBib}

\bibitem [\protect \citeauthoryear {%
Oberman%
}{%
Oberman%
}{%
{\protect \APACyear {2013}}%
}]{%
Oberman13}
\APACinsertmetastar {%
Oberman13}%
\begin{APACrefauthors}%
Oberman, A\BPBI M.%
\end{APACrefauthors}%
\unskip\
\newblock
\APACrefYearMonthDay{2013}{}{}.
\newblock
{\BBOQ}\APACrefatitle {A numerical method for variational problems with convexity constraints} {A numerical method for variational problems with convexity constraints}.{\BBCQ}
\newblock
\APACjournalVolNumPages{SIAM J. Sci. Comput.}{35}{1}{A378--A396}.
\newblock
\begin{APACrefURL} \url{https://doi-org.myaccess.library.utoronto.ca/10.1137/120869973} \end{APACrefURL}
\newblock
\begin{APACrefDOI} \doi{10.1137/120869973} \end{APACrefDOI}
\PrintBackRefs{\CurrentBib}

\bibitem [\protect \citeauthoryear {%
Rochet%
}{%
Rochet%
}{%
{\protect \APACyear {1987}}%
}]{%
Rochet87}
\APACinsertmetastar {%
Rochet87}%
\begin{APACrefauthors}%
Rochet, J\BHBI C.%
\end{APACrefauthors}%
\unskip\
\newblock
\APACrefYearMonthDay{1987}{}{}.
\newblock
{\BBOQ}\APACrefatitle {A Necessary and Sufficient Condition for Rationalizability in a Quasi-Linear Context} {A necessary and sufficient condition for rationalizability in a quasi-linear context}.{\BBCQ}
\newblock
\APACjournalVolNumPages{J. Math. Econom.}{16}{}{191--200}.
\PrintBackRefs{\CurrentBib}

\bibitem [\protect \citeauthoryear {%
Rochet%
\ \BBA {} Chon\'e%
}{%
Rochet%
\ \BBA {} Chon\'e%
}{%
{\protect \APACyear {1998}}%
}]{%
RochetChone98}
\APACinsertmetastar {%
RochetChone98}%
\begin{APACrefauthors}%
Rochet, J\BHBI C.%
\BCBT {}\ \BBA {} Chon\'e, P.%
\end{APACrefauthors}%
\unskip\
\newblock
\APACrefYearMonthDay{1998}{}{}.
\newblock
{\BBOQ}\APACrefatitle {Ironing, sweeping, and multidimensional screening} {Ironing, sweeping, and multidimensional screening}.{\BBCQ}
\newblock
\APACjournalVolNumPages{Econometrica}{66}{}{783--826}.
\PrintBackRefs{\CurrentBib}

\bibitem [\protect \citeauthoryear {%
Rockafellar%
}{%
Rockafellar%
}{%
{\protect \APACyear {1970}}%
}]{%
Rockafellar70}
\APACinsertmetastar {%
Rockafellar70}%
\begin{APACrefauthors}%
Rockafellar, R\BPBI T.%
\end{APACrefauthors}%
\unskip\
\newblock
\APACrefYear{1970}.
\newblock
\APACrefbtitle {Convex analysis} {Convex analysis}.
\newblock
\APACaddressPublisher{}{Princeton University Press, Princeton, N.J.}
\PrintBackRefs{\CurrentBib}

\bibitem [\protect \citeauthoryear {%
Rockafellar%
}{%
Rockafellar%
}{%
{\protect \APACyear {1974}}%
}]{%
Rockafellar74}
\APACinsertmetastar {%
Rockafellar74}%
\begin{APACrefauthors}%
Rockafellar, R\BPBI T.%
\end{APACrefauthors}%
\unskip\
\newblock
\APACrefYear{1974}.
\newblock
\APACrefbtitle {Conjugate duality and optimization} {Conjugate duality and optimization}.
\newblock
\APACaddressPublisher{}{SIAM}.
\PrintBackRefs{\CurrentBib}

\bibitem [\protect \citeauthoryear {%
Spence%
}{%
Spence%
}{%
{\protect \APACyear {1974}}%
}]{%
Spence74}
\APACinsertmetastar {%
Spence74}%
\begin{APACrefauthors}%
Spence, M.%
\end{APACrefauthors}%
\unskip\
\newblock
\APACrefYearMonthDay{1974}{}{}.
\newblock
{\BBOQ}\APACrefatitle {Competitive and Optimal Responses to Signals: An Analysis of Efficiency and Distribution} {Competitive and optimal responses to signals: An analysis of efficiency and distribution}.{\BBCQ}
\newblock
\APACjournalVolNumPages{J. Econom. Theory}{7}{}{296--332}.
\PrintBackRefs{\CurrentBib}

\bibitem [\protect \citeauthoryear {%
Trudinger%
\ \BBA {} Wang%
}{%
Trudinger%
\ \BBA {} Wang%
}{%
{\protect \APACyear {2008}}%
}]{%
TrudingerWang08}
\APACinsertmetastar {%
TrudingerWang08}%
\begin{APACrefauthors}%
Trudinger, N\BPBI S.%
\BCBT {}\ \BBA {} Wang, X\BHBI J.%
\end{APACrefauthors}%
\unskip\
\newblock
\APACrefYearMonthDay{2008}{}{}.
\newblock
{\BBOQ}\APACrefatitle {The {M}onge-{A}mp\`ere equation and its geometric applications} {The {M}onge-{A}mp\`ere equation and its geometric applications}.{\BBCQ}
\newblock
\BIn{} \APACrefbtitle {Handbook of geometric analysis. {N}o. 1} {Handbook of geometric analysis. {N}o. 1}\ (\BVOL~7, \BPGS\ 467--524).
\newblock
\APACaddressPublisher{}{Int. Press, Somerville, MA}.
\PrintBackRefs{\CurrentBib}

\bibitem [\protect \citeauthoryear {%
Wilson%
}{%
Wilson%
}{%
{\protect \APACyear {1993}}%
}]{%
Wilson93}
\APACinsertmetastar {%
Wilson93}%
\begin{APACrefauthors}%
Wilson, R.%
\end{APACrefauthors}%
\unskip\
\newblock
\APACrefYear{1993}.
\newblock
\APACrefbtitle {Nonlinear Pricing} {Nonlinear pricing}.
\newblock
\APACaddressPublisher{Oxford}{Oxford University Press}.
\PrintBackRefs{\CurrentBib}

\bibitem [\protect \citeauthoryear {%
Zhang%
}{%
Zhang%
}{%
{\protect \APACyear {2018}}%
}]{%
Zhang18}
\APACinsertmetastar {%
Zhang18}%
\begin{APACrefauthors}%
Zhang, K\BPBI S.%
\end{APACrefauthors}%
\unskip\
\newblock
\APACrefYear{2018}.
\unskip\
\newblock
\APACrefbtitle {Existence, uniqueness, concavity and geometry of the monopolist's problem facing consumers with nonlinear price preferences} {Existence, uniqueness, concavity and geometry of the monopolist's problem facing consumers with nonlinear price preferences}\ \APACtypeAddressSchool {\BUPhD}{}{}.
\unskip\
\newblock
\APACaddressSchool {}{University of Toronto (Canada)}.
\PrintBackRefs{\CurrentBib}

\end{thebibliography}

\end{document}